\newtheorem{teo}{Theorem}[section]
\newtheorem{Lemma}{Lemma}[section]
\newtheorem{theorem}[Lemma]{Theorem}
\newtheorem{proposition}[Lemma]{Proposition}
\newtheorem{lemma}[Lemma]{Lemma}
\newtheorem{remark}[Lemma]{Remark}
\newtheorem{definition}[Lemma]{Definition}
\begin{document}
\title[NSC]{Optimal control of Newtonian fluids in a stochastic environment}
\author{Nikolai Chemetov}
\address{Department of Computing and Mathematics-FFCLRP, University of S{\~a}o
Paulo, 14040-901 Ribeir{\~a}o Preto - SP, Brazil}
\email{nvchemetov@gmail.com }
\author{Fernanda Cipriano}
\address{Centro de Matem{\'a}tica e Aplica\c c\~oes (CMA), FCT-UNL and Dep.
de Matem{\'a}tica, Faculdade de Ci\^encias e Tecnologia, UNL \\
Quinta da Torre \\
2829 -516 Caparica, Lisboa \\
Portugal}
\email{cipriano@fct.unl.pt}
\date{}

\begin{abstract}
We consider a velocity tracking problem { for stochastic } Navier-Stokes
equations in a 2D-bounded domain. The control acts on the boundary through
an injection-suction device with { uncertainty,} which acts in accordance with the non-homogeneous Navier-slip boundary conditions. After establishing a
suitable { stability } result for the solution of the stochastic state
equation, we prove the well-posedness of the stochastic linearized state
equation and show that the G\^ateaux derivative of the control-to-state
mapping corresponds to the unique solution of the linearized equation. Next,
we study the stochastic backward adjoint equation and establish a duality
relation between the solutions of the forward linearized equation and the
backward adjoint equation. Finally, we derive the first-order optimality
conditions.
\end{abstract}

\maketitle
\tableofcontents

\textit{Mathematics Subject Classification (2000)}: 60H15, 6D55, 93E20,
49K45.

\medskip \textit{Key words}: Stochastic Navier-Stokes equations, Navier-slip
boundary conditions, Stochastic backward equation, First-order optimality
conditions.

\section{Introduction}

\label{sec1} \setcounter{equation}{0}

\bigskip 

Optimal control problems for fluid flows have been extensively studied in
the literature and are of major importance in technology. Roughly speaking,
the control is exercised either within the domain occupied by the fluid, by
some distributed force (acting over the entire domain or over some specific
region), or on the boundary of the domain.

Distributed optimal control problems for Newtonian fluids, described by the
deterministic Navier-Stokes equations have been addressed in the literature,
let us mention \cite{deRK05}, \cite{FGHM00}, \cite{TW06}, \cite{WR10} and
references therein (see also the recent related model \cite{TC23}). The
deterministic case for distributed forces is quite well understood from the
theoretical and numerical point of views. The boundary control problems are
more singular ones but of great importance because they appear in many
technological applications, namely where the control of the flow is
implemented by an injection/suction device placed on the boundary of the
domain (see for instance \cite{bla}, \cite{bra}, \cite{CC18}, \cite{CC16}, 
\cite{deRY09}, \cite{FGHM00}, \cite{ghs}, \cite{gm}).

On the other hand, real world physical systems always experience random
fluctuations and inaccuracy of measurements, which are modelled by adding to
the deterministic partial differential equation suitable random
internal/external forces or by considering random initial data and boundary
conditions. It is worth to stress that the optimal control of a fluid flow
in a stochastic environment is much more involved that its deterministic
analogous, and just few results are available in the literature. Let us
mention \cite{BT19}, \cite{BT21}, \cite{B99}, \cite{L00}, \cite{L02}, \cite%
{S98}, \cite{TC23S}, where the authors solved stochastic tracking control
problems for Newtonian and non-Newtonian fluids in 2D and 3D. In these works
the control variables act in the interior of the domain.

The authors in \cite{ZG23} studied a stochastic boundary control problem for
the deterministic steady Navier-Stokes equations, where the stochastic
control is imposed just on the boundary by a non-homogeneous Dirichlet
boundary condition. We recall that the solution of the Navier-Stokes
supplemented with the Dirichlet boundary condition develops strong boundary
layer for small values of the viscosity. Then in the last decades a great
attention has been developed to the study of deterministic Navier-Stokes
equations supplemented with the { Navier-slip } boundary conditions. According to
the studies \cite{C}-\cite{CC16}, \cite{C96}, the non-homogeneous
Navier-slip boundary conditions are compatible with the inviscid limit
transition, which suggests that, comparing with the non-homogeneous
Dirichlet boundary conditions, the Navier-slip boundary conditions seems
more appropriate to control the evolution of turbulent flows typically
associated with high Reynolds number (or small viscosity).

The present work addresses an optimal boundary control problem for stochastic
Navier-Stokes equation driven by a multiplicative Gaussian noise, on a
bounded domain $\mathcal{O}\subset \mathbb{R}^2$. The dynamical law reads 
\begin{equation}
\left\{ 
\begin{array}{ll}
\begin{array}{l}
d\mathbf{y}=(\nu \Delta \mathbf{y}-\left( \mathbf{y\cdot }\nabla \right) 
\mathbf{y}-\nabla \pi )\,dt+\mathbf{G}(t,\mathbf{y})\,d{\mathcal{W}}_{t}, \\ 
\\ 
\mbox{div}\,\mathbf{y}=0,%
\end{array}
& \mbox{in}{\ \mathcal{O}_{T}=(0,T)\times \mathcal{O}},\vspace{2mm} \\ 
\mathbf{y}\cdot \mathbf{n}=a,\;\quad \left[ 2D(\mathbf{y})\,\mathbf{n}%
+\alpha \mathbf{y}\right] \cdot {\bm{\tau }}=b\;\quad & \text{on}\ \Gamma
_{T}=(0,T)\times \Gamma ,\vspace{2mm} \\ 
\mathbf{y}(0,\mathbf{x})=\mathbf{y}_{0}(\mathbf{x}) & \mbox{in}\ {\mathcal{O}%
},%
\end{array}%
\right.  \label{NSy}
\end{equation}
where $\mathbf{y}=\mathbf{y}(t,\mathbf{x})$ is the 2D-velocity random field, 
$\pi =\pi (t,\mathbf{x})$ is the pressure, $\nu >0$ is the viscosity and $%
\mathbf{y}_{0}$ is the initial condition that verifies 
\begin{equation}
\mbox{div}\,\mathbf{y}_{0}=0\qquad \mbox{ in   }\ {\mathcal{O}}.
\label{ICNS}
\end{equation}%
Here $\ $%
\begin{equation*}
D(\mathbf{y})=\frac{1}{2}[\nabla \mathbf{y}+(\nabla \mathbf{y})^{T}]
\end{equation*}%
is the rate-of-strain tensor; $\mathbf{n}$ is the external unit normal to
the boundary $\Gamma \in C^{2}$ of the domain ${\mathcal{O}}$ and $\bm{\tau }
$ is the tangent unit vector to $\Gamma ,$ such that $(\mathbf{n},\bm{\tau }%
) $ forms a standard orientation in $\mathbb{R}^{2}.$\ The positive constant 
$\alpha $ \ is the so-called friction coefficient. The quantity $a$
corresponds to the inflow and outflow fluid through $\Gamma $, satisfying \
the compatibility condition%
\begin{equation}
\int_{\Gamma }a(t,\mathbf{x})\,\,d\mathbf{\gamma }=0\quad \quad 
\mbox{ for
any  }\;t\in \lbrack 0,T].  \label{eqC2}
\end{equation}%
This condition means that the quantity of inflow fluid should coincide with
the quantity of outflow fluid. The boundary functions $a$ \ and $b$ will be
considered as the control variables for the physical system \eqref{NSy}. The
term $\mathbf{G}(t,\mathbf{y})\,{\mathcal{W}}_{t}$ is a multiplicative white
noise.

The main goal is to minimize the following cost functional 
\begin{equation*}
\displaystyle J(a,b,\mathbf{y})=\mathbb{E}\int_{{\mathcal{O}}_{T}}\frac{1}{2}%
|\mathbf{y}-\mathbf{y}_{d}|^{2}\,d\mathbf{x}dt+\mathbb{E}\int_{\Gamma
_{T}}\left( \frac{\lambda _{1}}{2}|a|^{2}+\frac{\lambda _{2}}{2}%
|b|^{2}\right) \,d\mathbf{\gamma }dt,
\end{equation*}%
constrained to the stochastic Navier-Stokes equation (\ref{NSy}), where $%
\mathbf{y}_{d}\in L_{2}(\Omega \times {\mathcal{O}}_{T})$ is a desired
target field, $\lambda _{1},\lambda _{2}>0$ and $(a,b)$ will be taken is an
appropriate space of admissible controls. {
From physical point of view, the variable  $a$ and $b$, describe the quantity of the fluid crossing the boundary,  and the  vorticity component  on the boundary (see
\cite{C} Corollary $1$ and \cite{CC1}), respectively,  which can be prescribed  by the operational controller of the physical system.
In practice, the vorticity can be induced  through some mechanisms such as  rotating container walls, moving walls,  jet injection/suction (tangentially to the wall), rough surfaces, electromagnetic forces (for conducting fluids), and many others. On the other hand, the induction of vorticity in fluid dynamics is crucial for optimizing certain industrial processes, enhancing mixing efficiency, and controlling flow patterns in various applications. By precisely manipulating boundary conditions to generate desired vorticity, engineers can improve the performance of systems such as chemical reactors, aerodynamic surfaces, heating-ventilation-air conditioning systems, etc. This control allows for efficient heat transfer, uniform mixing of reactants, and improved stability of fluid flows. Optimal vorticity induction is also essential also in environmental applications, such as pollutant dispersion and water treatment processes, ensuring effective and sustainable solutions.

}

The first stage towards the resolution of this optimal control problem has
been already performed in the article \cite{CC16}, where { the authors }
established the well-posedness of the state system \eqref{NSy}, and showed
the existence of an optimal solution $(a,b)$ in a compact set of predictable
stochastic processes verifying an exponential integrability condition.

The plan of the present paper is as follows. Section \ref{sec2} introduces
the appropriate functional spaces and recalls convenient results from \cite%
{CC16} about the solvability of the stochastic Navier-Stokes equations
supplemented with the non-homogeneous Navier-slip boundary conditions. The
Section \ref{sec4} presents a stability result for the solution of the
stochastic state equation. Next, we formulate the control problem in Section %
\ref{sec3}.

Section \ref{sec5} improves the integrability properties of the state, which
is crucial in Section \ref{sec6} to show the well-posedness of the
linearized state equation. Then, we proceed by studying the G\^ateaux
differentiability of the control-to-state mapping in Section \ref{sec7}.
Section \ref{sec8} is devoted to the analysis of the backward stochastic
adjoint equation. Finally, Section \ref{sec9} is devoted to show a suitable
duality relation between the solution of the linearized equation and the
solution of the adjoint equation, which is the main ingredient to deduce the
first-order-optimality conditions in Section \ref{sec10}.

\section{Functional setting and preliminary results to the state equation}

\label{sec2}\setcounter{equation}{0}

First let us introduce the notations and present some results, used in the
article.

Let us consider a real Banach space $X$, endowed with the norm $\left\Vert
\cdot \right\Vert _{X}.$ \ The space of $X$-valued measurable $p-$integrable
functions, defined on the time interval $[0,T],$ is denoted by $L_{p}(0,T;X)$
for $p\geqslant 1$.

The space $L_{p}(\Omega ,L_{r}(0,T;X))$ for $p,r\geqslant 1$ of the
processes $\mathbf{v}=\mathbf{v}(\omega ,t)$ with values in $X,$ defined on $%
\ \Omega \times \lbrack 0,T]$ and adapted to the filtration $\left\{ 
\mathcal{F}_{t}\right\} _{t\in \lbrack 0,T]}$, is endowed with the norms%
\begin{equation*}
\left\Vert \mathbf{v}\right\Vert _{L_{p}(\Omega ,L_{r}(0,T;X))}=\left( 
\mathbb{E(}\int_{0}^{T}\left\Vert \mathbf{v}\right\Vert _{X}^{r}\,dt)^{\frac{%
p}{r}}\right) ^{\frac{1}{p}}\text{ }
\end{equation*}%
and%
\begin{equation*}
\left\Vert \mathbf{v}\right\Vert _{L_{p}(\Omega ,L_{\infty }(0,T;X))}=\left( 
\mathbb{E}\sup_{t\in \lbrack 0,T]}\left\Vert \mathbf{v}\right\Vert _{X}^{p}\
\right) ^{\frac{1}{p}}\quad \text{for }r=\infty ,
\end{equation*}%
where $\mathbb{E}$ is the mathematical expectation with respect to the
probability measure $P,$ defined on $\Omega .$ \ By default we will omit the
dependence on the parameter $\omega \in \Omega $ in the notation for
processes $\mathbf{v}=\mathbf{v}(\omega ,t).$

We use the standard notation for the Lebesgue spaces $L_{p}(\mathcal{O})$
and the Sobolev spaces $H^{p}(\mathcal{O})$ with the norms denoted by $\Vert
\cdot \Vert _{p}$ and $\Vert \cdot \Vert _{H^{p}}$,\ respectively. The inner
product in $L_{2}(\mathcal{O})$ is denoted as $(\cdot ,\cdot )$ with the
associated norm $\Vert \cdot \Vert _{2}$. Let us introduce the following
divergence free spaces%
\begin{eqnarray*}
H &=&\{\mathbf{v}\in L_{2}({\mathcal{O}}):\,\mbox{div }\mathbf{v}=0\;\ \text{
in}\;\mathcal{D}^{\prime }({\mathcal{O}}),\;\ \mathbf{v}\cdot \mathbf{n}%
=0\;\ \text{ in}\;H^{-1/2}(\Gamma )\}, \\
V &=&\{\mathbf{v}\in H^{1}({\mathcal{O}}):\,\mbox{div }\mathbf{v}=0\;\;\text{%
a.e. in}\;{\mathcal{O}},\;\ \mathbf{v}\cdot \mathbf{n}=0\;\ \text{ in}%
\;H^{1/2}(\Gamma )\},
\end{eqnarray*}%
where the following inner product 
\begin{equation*}
\left( \mathbf{v},\mathbf{z}\right) _{V}=2\left( D\mathbf{v},D\mathbf{z}%
\right) +\alpha \int_{\Gamma }\mathbf{v}\cdot \mathbf{z}\,\,d\mathbf{\gamma }
\end{equation*}%
is considered on the space $V$ endowed with the norm $\Vert \mathbf{v}\Vert
_{V}=\sqrt{\left( \mathbf{v},\mathbf{v}\right) _{V}}.$ \ 

In what follows we will often use the results of continuous embedding%
\begin{equation}
H^{1}(0,T)\subset C([0,T]),\qquad H^{1}(\mathcal{O})\subset L_{2}(\Gamma ).
\label{aa}
\end{equation}%
Let us present the results of \cite{lad} (see the pages 62, 69), of \cite%
{nir} (the page 125) and of \cite{tem} (the pages 16-20) in the next lemma.

\begin{lemma}
\label{gag} Let us introduce the notation 
\begin{equation}
\mathbf{v}_{\mathcal{O}}=\int_{\mathcal{O}}\mathbf{v\ }d\mathbf{x.}
\label{000}
\end{equation}%
For $q\geqslant 2$ and $\mathbf{v}\in H^{1}(\mathcal{O})$ \ the
Gagliano-Nirenberg-Sobolev inequality 
\begin{equation}
||\mathbf{v-v}_{\mathcal{O}}||_{q}\leqslant C||\mathbf{v}||_{2}^{2/q}||%
\nabla \mathbf{v}||_{2}^{1-2/q},\quad  \label{LI}
\end{equation}%
and the trace interpolation inequality%
\begin{equation}
||\mathbf{v-v}_{\mathcal{O}}||_{L_{q}(\Gamma )}\leqslant C||\mathbf{v}%
||_{2}^{1/q}||\nabla \mathbf{v}||_{2}^{1-1/q}  \label{TT}
\end{equation}%
are valid. Moreover, any $\mathbf{v}\in V$ satisfies the Korn inequality 
\begin{equation}
\left\Vert \mathbf{v}\right\Vert _{H^{1}}\leqslant C\left\Vert \mathbf{v}%
\right\Vert _{V}.  \label{korn}
\end{equation}
\end{lemma}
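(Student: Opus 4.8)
The three inequalities are classical, so the plan is to recall the standard arguments and to make explicit how the mean–subtraction and the boundary term enter. Throughout I write $w=\mathbf{v}-\mathbf{v}_{\mathcal{O}}$ for the mean value $\mathbf{v}_{\mathcal{O}}$ of $\mathbf{v}$, so that $\nabla w=\nabla\mathbf{v}$ while $\int_{\mathcal{O}}w\,d\mathbf{x}=0$; the latter is exactly what allows the Poincaré–Wirtinger inequality $\|w\|_{2}\le C\|\nabla\mathbf{v}\|_{2}$ to be used below.

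For \eqref{LI} I would first invoke the multiplicative interpolation inequality between $L_{2}(\mathcal{O})$ and $H^{1}(\mathcal{O})$, namely $\|w\|_{q}\le C\|w\|_{2}^{2/q}\|w\|_{H^{1}}^{1-2/q}$ for $q\ge 2$, which in two dimensions follows from interpolating the scale $[L_{2},H^{1}]_{\theta}=H^{\theta}$ with $\theta=1-2/q$ and the Sobolev embedding $H^{1-2/q}(\mathcal{O})\hookrightarrow L_{q}(\mathcal{O})$. Applying Poincaré–Wirtinger to bound $\|w\|_{H^{1}}\le C\|\nabla\mathbf{v}\|_{2}$, and using $\|w\|_{2}\le\|\mathbf{v}\|_{2}$, turns this into $\|w\|_{q}\le C\|\mathbf{v}\|_{2}^{2/q}\|\nabla\mathbf{v}\|_{2}^{1-2/q}$, which is \eqref{LI}.

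The trace inequality \eqref{TT} I would deduce from a divergence–theorem trace bound. Choosing a smooth vector field $\mathbf{N}$ with $\mathbf{N}\cdot\mathbf{n}=1$ on $\Gamma$ and applying the divergence theorem to $|w|^{q}\mathbf{N}$ gives $\int_{\Gamma}|w|^{q}\,d\mathbf{\gamma}\le C\int_{\mathcal{O}}\bigl(|w|^{q-1}|\nabla w|+|w|^{q}\bigr)\,d\mathbf{x}$. Bounding the first term by Hölder's inequality as $\|w\|_{2(q-1)}^{q-1}\|\nabla w\|_{2}$ and the second as $\|w\|_{q}^{q}$, then inserting \eqref{LI} together with Poincaré, both contributions are controlled by $C\|w\|_{2}\|\nabla\mathbf{v}\|_{2}^{q-1}$; taking the $q$-th root and using $\|w\|_{2}\le\|\mathbf{v}\|_{2}$ yields precisely the boundary exponents $1/q$ and $1-1/q$ of \eqref{TT}, which reflect the one–dimensional nature of $\Gamma$.

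The Korn inequality \eqref{korn} carries the real work, and is the step I expect to be the main obstacle. The starting point is Korn's second inequality $\|\mathbf{v}\|_{H^{1}}^{2}\le C\bigl(\|\mathbf{v}\|_{2}^{2}+\|D(\mathbf{v})\|_{2}^{2}\bigr)$, valid on the $C^{2}$ domain $\mathcal{O}$, which I would take as known; its proof ultimately rests on the Nečas/Lions lemma controlling $\|\mathbf{v}\|_{2}$ by the $H^{-1}$–norms of $\mathbf{v}$ and $\nabla\mathbf{v}$. It then suffices to absorb the lower–order term by proving $\|\mathbf{v}\|_{2}^{2}\le C\bigl(\|D(\mathbf{v})\|_{2}^{2}+\int_{\Gamma}|\mathbf{v}|^{2}\,d\mathbf{\gamma}\bigr)$, since the right-hand side is comparable to $\|\mathbf{v}\|_{V}^{2}$. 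I would argue by contradiction: if no such constant existed there would be a sequence $\mathbf{v}_{n}$ with $\|\mathbf{v}_{n}\|_{2}=1$ and $\|D(\mathbf{v}_{n})\|_{2}^{2}+\int_{\Gamma}|\mathbf{v}_{n}|^{2}\,d\mathbf{\gamma}\to0$; Korn's second inequality bounds $\mathbf{v}_{n}$ in $H^{1}$, so along a subsequence $\mathbf{v}_{n}\rightharpoonup\mathbf{v}$ in $H^{1}$, $\mathbf{v}_{n}\to\mathbf{v}$ strongly in $L_{2}(\mathcal{O})$ by Rellich and in $L_{2}(\Gamma)$ by compactness of the trace, whence $\|\mathbf{v}\|_{2}=1$, $D(\mathbf{v})=0$ and $\mathbf{v}|_{\Gamma}=0$. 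In two dimensions $D(\mathbf{v})=0$ forces $\mathbf{v}(\mathbf{x})=\mathbf{c}+\beta\mathbf{x}^{\perp}$ to be an infinitesimal rigid motion, and its vanishing on the curve $\Gamma$ (which contains three non–collinear points) forces $\mathbf{c}=0$ and $\beta=0$, i.e. $\mathbf{v}\equiv0$, contradicting $\|\mathbf{v}\|_{2}=1$. The genuinely new content specific to the norm $\|\cdot\|_{V}$ is this compactness argument together with the rigidity of solutions of $D(\mathbf{v})=0$, which is what makes the boundary term $\alpha\int_{\Gamma}|\mathbf{v}|^{2}\,d\mathbf{\gamma}$ sufficient to control the full $H^{1}$ norm and hence to give \eqref{korn}.
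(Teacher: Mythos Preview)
Your arguments are correct and self-contained. The paper, however, does not prove this lemma at all: it merely states the three inequalities and cites standard references (Ladyzhenskaya--Solonnikov--Uraltseva for \eqref{LI}, Nirenberg for \eqref{TT}, and Temam for \eqref{korn}). So there is no proof in the paper to compare against; you have supplied what the authors deliberately omitted. Your interpolation-plus-Poincar\'e route to \eqref{LI}, the divergence-theorem trace argument for \eqref{TT}, and the second-Korn-inequality-plus-compactness argument for \eqref{korn} are exactly the standard proofs one finds in those sources.

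One minor remark: the paper's formula \eqref{000} literally defines $\mathbf{v}_{\mathcal{O}}$ as the integral rather than the average, so your step $\|w\|_{2}\le\|\mathbf{v}\|_{2}$ tacitly reads it as the mean. That is clearly the intended interpretation (and in any case the lemma is only applied in the paper to $\mathbf{v}\in V$, for which $\mathbf{v}_{\mathcal{O}}=0$), so this is harmless.
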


Let us remark that 
\begin{equation*}
\mathbf{v}_{\mathcal{O}}=0,\qquad \forall \mathbf{v}\in V,
\end{equation*}%
\ since 
\begin{equation*}
\int_{\mathcal{O}}v_{j}\mathbf{\ }d\mathbf{x}=\int_{\mathcal{O}}\mbox{div }(%
\mathbf{v}x_{j})\mathbf{\ }d\mathbf{x}=\int_{\Gamma }x_{j}(\mathbf{v}\cdot 
\mathbf{n)\ }d\mathbf{\gamma }=0\qquad \text{for}\mathit{\ \ \ }j=1,2.
\end{equation*}

Also for arbitrary $\mathbf{v}\in H^{2}(\mathcal{O})$, $\ \mathbf{z}\in V$
integrating by parts, we have 
\begin{equation}
-\int_{\mathcal{O}}\triangle \mathbf{v}\cdot \mathbf{z}\,d\mathbf{x}=2\int_{%
\mathcal{O}}\,D\mathbf{v}\cdot D\mathbf{z}\,d\mathbf{x}-\int_{\Gamma }2(%
\mathbf{n}\cdot D\mathbf{v})\cdot \mathbf{z}\,d\mathbf{\gamma },
\label{integrate}
\end{equation}%
therefore 
\begin{equation}
-\int_{\mathcal{O}}\triangle \mathbf{v}\cdot \mathbf{z}\,d\mathbf{x}=(%
\mathbf{v},\mathbf{z}\,)_{V}-\int_{\Gamma }b(\mathbf{z}\cdot {\bm{\tau })}\,d%
\mathbf{\gamma },  \label{integrate2}
\end{equation}%
if the function $\mathbf{v}$ fulfills the Navier-slip boundary conditions
(see in the system \eqref{NSy}).

We will frequently apply in our considerations the Young inequality 
\begin{equation}
uv\leqslant \frac{u^{p}}{p}+\frac{v^{q}}{q},\quad \quad \frac{1}{p}+\frac{1}{%
q}=1,\quad \forall p,\,q>1.  \label{yi}
\end{equation}

Let us define the norm and the absolute value of the inner product for a
vector 
\begin{equation*}
\mathbf{h}=(\mathbf{h}_{1},\dots ,\mathbf{h}_{m})\in H^{m}=\overbrace{%
H\times ...\times H}^{m-times}
\end{equation*}%
with a fixed $\mathbf{v}\in H$ as%
\begin{equation}
\left\Vert \mathbf{h}\right\Vert _{2}=\left(\sum_{k=1}^{m}\left\Vert \mathbf{h}%
_{k}\right\Vert _{2}^2\right)^{1/2}\quad \text{and}\quad |\left( \mathbf{h},\mathbf{v}%
\right) |=\left( \sum_{k=1}^{m}\left( \mathbf{h}_{k},\mathbf{v}\right)
^{2}\right) ^{1/2}.  \label{product}
\end{equation}%
Let 
\begin{equation*}
\mathbf{G}(t,\mathbf{y}):[0,T]\times H\rightarrow H^{m}\quad \text{with }\ 
\mathbf{G}(t,\mathbf{y})=(G^{1}(t,\mathbf{y}),\dots ,G^{m}(t,\mathbf{y}))
\end{equation*}%
be Lipschitz on $\mathbf{y}$ and satisfy the linear growth 
\begin{align}
\left\Vert \mathbf{G}(t,\mathbf{v})-\mathbf{G}(t,\mathbf{z})\right\Vert
_{2}^{2}& \leqslant K\left\Vert \mathbf{v}-\mathbf{z}\right\Vert _{2}^{2}, 
\notag \\
\left\Vert \mathbf{G}(t,\mathbf{v})\right\Vert _{2}& \leqslant K\left(
1+\left\Vert \mathbf{v}\right\Vert _{2}\right) ,\qquad \forall \mathbf{v},%
\mathbf{z}\in H,\;t\in \lbrack 0,T],  \label{G}
\end{align}%
for some constant $K>0.$ Let%
\begin{equation*}
\mathbf{G}(t,\mathbf{y})\,d{\mathcal{W}}_{t}=\sum_{k=1}^{m}G^{k}(t,\mathbf{y}%
)\,d{\mathcal{W}}_{t}^{k}
\end{equation*}%
be the stochastic noise, where ${\mathcal{W}}_{t}=({\mathcal{W}}%
_{t}^{1},\dots ,{\mathcal{W}}_{t}^{m})$ is a standard $\mathbb{R}^{m}$%
-valued Wiener process, defined on a complete probability space $(\Omega ,%
\mathcal{F},P)$ and endowed with a filtration $\left\{ \mathcal{F}%
_{t}\right\} _{t\in \lbrack 0,T]}$, such that $\mathcal{F}_{0}$\ contains
every $P$-null subset of $\Omega $.

Let us introduce the space of functions \ 
\begin{equation*}
\mathcal{H}_{p}(\Gamma )=\left\{ (a,b):||(a,b)||_{\mathcal{H}_{p}(\Gamma
)}<+\infty \right\}
\end{equation*}%
with the norm%
\begin{equation*}
||(a,b)||_{\mathcal{H}_{p}(\Gamma )}=||a||_{W_{p}^{1-\frac{1}{p}}(\Gamma
)}+\,||\partial _{t}a||_{H^{-\frac{1}{2}}(\Gamma )}+\Vert b\Vert _{W_{p}^{-%
\frac{1}{p}}(\Gamma )}+\Vert b\Vert _{L_{2}(\Gamma )}+||\partial
_{t}b||_{H^{-\frac{1}{2}}(\Gamma )}.
\end{equation*}%
Let $p \in (2,+\infty )$ be given and the data $a,b$ and $\mathbf{u}_{0}$\
belong to the following Banach spaces 
\begin{equation}
(a,b)\in L_{4}(\Omega \times (0,T);\mathcal{H}_{p}(\Gamma )), \qquad \mathbf{%
u}_{0}\in L_{4}(\Omega ;H),  \label{eq00sec12}
\end{equation}%
assuming that $(a,b)$ is a pair of predictable stochastic processes.

Let us formulate the auxiliary result, demonstrated in \cite{CC16}.

\begin{lemma}
\label{navier slip} Let $(a,b)$ be a given pair of functions, satisfying %
\eqref{eq00sec12}. Then there exists a unique solution $\mathbf{a}$ of the
Stokes problem with the non-homogeneous Navier-slip boundary condition 
\begin{equation}
\left\{ 
\begin{array}{l}
-\Delta \mathbf{a}+\nabla \pi =0,\quad \nabla \cdot \mathbf{a}=0\quad \text{
in }{\mathcal{O}}, \\ 
\\ 
\mathbf{a}\cdot \mathbf{n}=a,\quad \left[ 2D(\mathbf{a})\,\mathbf{n}+\alpha 
\mathbf{a}\right] \cdot {\bm{\tau }}=b\quad \text{ on }\Gamma ,%
\end{array}%
\right. \quad {\text{a.e. in }}\Omega \times (0,T),  \label{ha}
\end{equation}%
such that%
\begin{equation}
||\mathbf{a}||_{W_{p}^{1}({\mathcal{O}})}+||\partial _{t}\mathbf{a}||_{L_{2}(%
{\mathcal{O}})}\leqslant C||(a,b)||_{\mathcal{H}_{p}(\Gamma )}\quad \text{%
a.e. in }\Omega \times (0,T).  \label{cal}
\end{equation}%
In particular, we have%
\begin{equation}
\label{calderon1}
\begin{array}{lll}
&\mathbf{a}\in L_{4}(\Omega ;C([0,T];L_{2}({\mathcal{O}})))\cap
L_{4}(\Omega \times (0,T);C(\overline{{\mathcal{O}}})\cap W_{p}^{1}({%
\mathcal{O}})),  \vspace{2mm}\\
\qquad &\partial _{t}\mathbf{a} \in L_{4}(\Omega \times (0,T);L_{2}({%
\mathcal{O}})).  
\end{array}
\end{equation}
{\ }Here and below positive constants $C$ depend on the data considered for
our problem, such as the domain $\mathcal{O},$ \ the regularity of $\Gamma ,$
the physical constants $\nu $, $\alpha $\ and a given time moment $T$.
\end{lemma}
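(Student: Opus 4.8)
The plan is to solve the system pointwise in $(\omega,t)$ as a deterministic steady Stokes problem and then assemble the stochastic integrability from the resulting pointwise estimate. First I would fix $(\omega,t)$ and treat \eqref{ha} as a deterministic problem with boundary data $a(\omega,t,\cdot)$ and $b(\omega,t,\cdot)$, where the normal datum satisfies the compatibility condition \eqref{eqC2}. Testing the momentum equation with $\mathbf{z}\in V$ and using the integration-by-parts identity \eqref{integrate2} together with $\nabla\cdot\mathbf{z}=0$ and $\mathbf{z}\cdot\mathbf{n}=0$ (which kills the pressure term), the problem reduces to the variational identity $(\mathbf{a},\mathbf{z})_V=\int_\Gamma b\,(\mathbf{z}\cdot\bm{\tau})\,d\gamma$ for all $\mathbf{z}\in V$, subject to $\mathbf{a}\cdot\mathbf{n}=a$ on $\Gamma$. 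Writing $\mathbf{a}=\mathbf{a}_0+\mathbf{w}$ with a divergence-free lifting $\mathbf{a}_0$ of the normal trace $a$ (which exists precisely thanks to \eqref{eqC2}) and $\mathbf{w}\in V$, the right-hand side becomes a bounded linear functional on $V$ (by the trace embedding \eqref{aa} and $b\in L_2(\Gamma)$, resp. the $W_p^{-1/p}$ duality), while $(\cdot,\cdot)_V$ is an inner product whose norm is equivalent to $\|\cdot\|_{H^1}$ by the Korn inequality \eqref{korn}. Lax--Milgram then yields a unique $\mathbf{w}\in V$, hence a unique weak solution $\mathbf{a}\in H^1(\mathcal{O})$.

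Next I would upgrade the regularity. Since $\Gamma\in C^2$ and the data sit in the trace spaces $a\in W_p^{1-1/p}(\Gamma)$, $b\in W_p^{-1/p}(\Gamma)$ matching $W_p^1(\mathcal{O})$ solutions, the Calder\'on--Zygmund / Agmon--Douglis--Nirenberg regularity theory for the Stokes system with Navier-slip boundary conditions gives $\mathbf{a}\in W_p^1(\mathcal{O})$ together with the elliptic estimate $\|\mathbf{a}\|_{W_p^1}\leqslant C(\|a\|_{W_p^{1-1/p}(\Gamma)}+\|b\|_{W_p^{-1/p}(\Gamma)}+\|b\|_{L_2(\Gamma)})$. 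For the time derivative I would exploit the linearity of \eqref{ha} in the data: difference quotients in $t$ solve the same Stokes system with data $\partial_t a,\partial_t b\in H^{-1/2}(\Gamma)$, and the corresponding very-weak (transposition) solution estimate gives $\|\partial_t\mathbf{a}\|_{L_2(\mathcal{O})}\leqslant C(\|\partial_t a\|_{H^{-1/2}(\Gamma)}+\|\partial_t b\|_{H^{-1/2}(\Gamma)})$; passing to the limit in the difference quotients produces $\partial_t\mathbf{a}$ with this bound. Adding the two estimates yields the pointwise bound \eqref{cal}.

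Finally I would pass from the pointwise estimate to the stochastic statement \eqref{calderon1}. Raising \eqref{cal} to the fourth power and integrating over $\Omega\times(0,T)$ against the assumption $(a,b)\in L_4(\Omega\times(0,T);\mathcal{H}_p(\Gamma))$ gives $\mathbf{a}\in L_4(\Omega\times(0,T);W_p^1(\mathcal{O}))$ and $\partial_t\mathbf{a}\in L_4(\Omega\times(0,T);L_2(\mathcal{O}))$; the two-dimensional Sobolev embedding $W_p^1(\mathcal{O})\subset C(\overline{\mathcal{O}})$ (valid for $p>2$) then provides the $C(\overline{\mathcal{O}})$-membership. For the path continuity I would note that, almost surely, $t\mapsto\mathbf{a}(\omega,t)$ lies in $H^1(0,T;L_2(\mathcal{O}))$ since both $\mathbf{a}$ and $\partial_t\mathbf{a}$ belong to $L_2(0,T;L_2(\mathcal{O}))$; the embedding \eqref{aa} yields $\mathbf{a}(\omega,\cdot)\in C([0,T];L_2(\mathcal{O}))$ with $\sup_t\|\mathbf{a}\|_2^2\leqslant C\int_0^T(\|\mathbf{a}\|_2^2+\|\partial_t\mathbf{a}\|_2^2)\,dt$, and a Jensen/H\"older step followed by $\mathbb{E}$ gives $\mathbf{a}\in L_4(\Omega;C([0,T];L_2(\mathcal{O})))$. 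Predictability of $\mathbf{a}$ is inherited from that of $(a,b)$ because the solution map is linear and continuous, hence measurable.

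The hard part will be the elliptic regularity inputs of the second paragraph: the $W_p^1$ Calder\'on--Zygmund estimate and, above all, the very-weak $H^{-1/2}(\Gamma)\to L_2(\mathcal{O})$ estimate for the time-differentiated system, which requires the full Agmon--Douglis--Nirenberg machinery adapted to the Navier-slip boundary conditions and a careful justification of the differentiation in time via difference quotients.
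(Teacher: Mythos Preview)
The paper does not actually prove this lemma; it is stated as an auxiliary result ``demonstrated in \cite{CC16}'' and imported without argument. So there is no paper proof to compare against, and your proposal stands on its own.

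Your strategy is sound and is essentially how such a result is established: solve the steady Stokes problem with Navier-slip conditions pointwise in $(\omega,t)$ via a lifting plus Lax--Milgram in $V$, invoke elliptic $W_p^1$ regularity for the Stokes system with slip boundary (the paper itself cites Amrouche--Rejaiba \cite{amr2} and Solonnikov--\v{S}\v{c}adilov \cite{S73} for this circle of results), obtain the $L_2$ bound on $\partial_t\mathbf{a}$ by the transposition/very-weak estimate applied to the time-differentiated system, and then integrate the pointwise estimate \eqref{cal} over $\Omega\times(0,T)$ and use $W_p^1(\mathcal{O})\hookrightarrow C(\overline{\mathcal{O}})$ for $p>2$ and $H^1(0,T)\hookrightarrow C([0,T])$ to reach \eqref{calderon1}. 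Your identification of the delicate point is accurate: the estimate $\|\partial_t\mathbf{a}\|_{L_2(\mathcal{O})}\leqslant C(\|\partial_t a\|_{H^{-1/2}(\Gamma)}+\|\partial_t b\|_{H^{-1/2}(\Gamma)})$ is exactly a very-weak (transposition) solution bound, and its justification requires the $H^2$-regularity of the adjoint Stokes problem with homogeneous Navier-slip conditions so that the relevant boundary traces lie in $H^{1/2}(\Gamma)$; this is available from the references just mentioned but is not entirely routine for the slip condition. One minor addition: to conclude $\mathbf{a}\in L_4(\Omega;C([0,T];L_2))$ you should also argue that $\mathbf{a}\in L_4(\Omega\times(0,T);L_2)$, which follows from the $W_p^1$ bound and $W_p^1\hookrightarrow L_2$, before combining with $\partial_t\mathbf{a}\in L_4(\Omega\times(0,T);L_2)$ and the one-dimensional embedding.
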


\bigskip

Let us recall the concept of the solution of the stochastic differential
system (\ref{NSy}) and the properties of this solution, studied in \cite%
{CC16}, that will be relevant in our study of the control problem. Using the
formula (\ref{integrate2}) and the solution $\mathbf{a}$, founded in Lemma %
\ref{navier slip}, we can introduce the concept of the strong (\textit{in
the stochastic sense}) solution to the system (\ref{NSy}).

\begin{definition}
\label{1def} Assume that the data $(a,b)$ and $\mathbf{u}_{0}$ satisfy the
regularity (\ref{eq00sec12}). { An adapted  stochastic process $\mathbf{y}=\mathbf{u}+%
\mathbf{a}$ with $\mathbf{u}\in C([0,T];H)\cap L_{2}(0,T;V),$} $P$-${\text{a.e. in }}\Omega
, $\ \ is a strong solution of \eqref{NSy} with $\mathbf{y}_{0}=\mathbf{u}%
_{0}+\mathbf{a}(0)$ \ if \ the equation 
\begin{align}
\left( \mathbf{y}(t),\boldsymbol{\varphi }\right) & =\int_{0}^{t}\left[ -\nu
\left( \mathbf{y},\boldsymbol{\varphi }\right) _{V}+\int_{\Gamma }\nu b(%
\boldsymbol{\varphi }\cdot {\bm{\tau }})\,d\mathbf{\gamma }-((\mathbf{y}%
\cdot \nabla )\mathbf{y},\boldsymbol{\varphi })\,\right] \,ds+\left( \mathbf{%
y}_{0},\boldsymbol{\varphi }\right)  \notag \\
&  \notag \\
& \quad +\int_{0}^{t}\left( \mathbf{G}(s,\mathbf{y}(s)),\boldsymbol{\varphi }%
\right) \,d{\mathcal{W}}_{s}\qquad \text{for a.e. }(\omega ,t)\in \Omega
\times (0,T),  \label{res1}
\end{align}%
holds for any $\boldsymbol{\varphi }\in V$.
\vspace{2mm}
\end{definition}
{
We notice that \eqref{G},  \eqref{calderon1}  
and  $\mathbf{u}\in C([0,T];H)$,  $P-\text{a.e. in }\Omega$, 
yield
\begin{equation*}
\int_{0}^{T}\
\| \mathbf{G}(s,\mathbf{y}(s))\|_2^2
\,ds\leq K
\int_{0}^{T}(1+\|\mathbf{y}(s)\|_2^2)\,ds< \infty, \qquad 
 P-\text{a.e. in }\Omega. 
\end{equation*}
Then  the stochastic integral
$$
\int_{0}^{t} \mathbf{G}(s,\mathbf{y}(s))
 \,d{\mathcal{W}}_{s}, \quad t\in[0,T],
$$
is well defined as a  $H$-valued local martingale (c.f. \cite{Daprato}, p. 99-100), and 
\begin{equation*}
\int_{0}^{t}\left( \mathbf{G}(s,\mathbf{y}(s)),\boldsymbol{\varphi }\right)
\,d{\mathcal{W}}_{s}=\sum_{k=1}^{m}\int_{0}^{t}\left( G^{k}(s,\mathbf{y}(s)),%
\boldsymbol{\varphi }\right) \,d{\mathcal{W}}_{s}^{k}
\quad \text{for any } \boldsymbol{\varphi }\in V.
\end{equation*}%
}
\vspace{2mm}

By the { embedding } result (\ref{aa})$_{2}$ we note that the boundary condition 
$\mathbf{y}\cdot \mathbf{n}=a$ on $\Gamma ,$\ a.e. in $\Omega \times (0,T),$%
\ is well defined$\ $for $\mathbf{y}=\mathbf{u}+\mathbf{a}$ with $\mathbf{u}%
\in L_{2}(0,T;V)$.

The following result has been proved in Theorem 3.5 of the article \cite{CC16}.
\begin{theorem}
\label{the_1} Assume that the data $(a,b)$ and $\mathbf{u}_{0}$ satisfy (\ref%
{eq00sec12}). Then, a strong solution $\mathbf{y}=\mathbf{u}+\mathbf{a}$ to
the system \eqref{NSy} exists, such that 
\begin{equation*}
\mathbf{u}\in C([0,T];H)\cap L_{4}(0,T;V),\quad P\text{-a.e. in }\Omega ,
\end{equation*}%
and there exists a positive constant $\widehat{C}_{0},$ such that for any $%
t\in \lbrack 0,T]:$ 
\begin{eqnarray*}
\mathbb{E}\sup_{s\in \lbrack 0,t]}\xi _{0}^{2}(s)\left\Vert \mathbf{u}%
(s)\right\Vert _{2}^{2} &+&\nu \mathbb{E}\int_{0}^{t}\xi
_{0}^{2}(s)\left\Vert \mathbf{u}(s)\right\Vert _{V}^{2}\,ds \\
&\leqslant &C(\mathbb{E}\left\Vert \mathbf{u}_{0}\right\Vert _{2}^{2}+%
\mathbb{E}\int_{0}^{t}\xi _{0}^{2}(||(a,b)||_{\mathcal{H}_{p}(\Gamma
)}^{2}+1)\,ds),
\end{eqnarray*}%
\begin{eqnarray}
\mathbb{E}\sup_{s\in \lbrack 0,t]}\xi _{0}^{4}(s)\left\Vert \mathbf{u}%
(s)\right\Vert _{2}^{4} &+&\nu ^{2}\mathbb{E}\left( \int_{0}^{t}\xi
_{0}^{2}\left\Vert \mathbf{u}\right\Vert _{V}^{2}\,ds\right) ^{2}  \notag \\
&\leqslant &C(\mathbb{E}\left\Vert \mathbf{y}_{0}\right\Vert _{2}^{4}+%
\mathbb{E}\int_{0}^{t}\xi _{0}^{4}(||(a,b)||_{\mathcal{H}_{p}(\Gamma
)}^{4}+1)\,ds)  \label{uny}
\end{eqnarray}%
with the function%
\begin{equation}
\xi _{0}(t)=e^{-\widehat{C}_{0}\int_{0}^{t}(1+||(a,b)||_{\mathcal{H}%
_{p}(\Gamma )}^{2})\,ds},\qquad P\text{-a.e. in }\Omega \text{.}  \label{ksi}
\end{equation}
\end{theorem}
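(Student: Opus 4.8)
The plan is to construct the solution by a Galerkin scheme and to read the two inequalities off as uniform (in the Galerkin index) bounds, which simultaneously supply the compactness needed to pass to the limit via a monotonicity/stochastic-compactness argument; since that limit passage is by now standard, I will concentrate on the a priori estimates, which carry the real content. Writing $\mathbf{y}=\mathbf{u}+\mathbf{a}$ with $\mathbf{a}$ the Stokes lifting of Lemma \ref{navier slip}, the unknown $\mathbf{u}$ belongs to $V$. Testing \eqref{res1} with $\boldsymbol{\varphi}\in V$ and using the variational form of the Stokes problem \eqref{ha} (cf. \eqref{integrate2}) for $\mathbf{a}$, namely $(\mathbf{a},\boldsymbol{\varphi})_V=\int_\Gamma b(\boldsymbol{\varphi}\cdot\bm{\tau})\,d\mathbf{\gamma}$, the viscous term $-\nu(\mathbf{y},\boldsymbol{\varphi})_V$ and the boundary term $\int_\Gamma\nu b(\boldsymbol{\varphi}\cdot\bm{\tau})\,d\mathbf{\gamma}$ in \eqref{res1} collapse to $-\nu(\mathbf{u},\boldsymbol{\varphi})_V$. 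Hence $\mathbf{u}$ solves a stochastic evolution equation with drift $-\partial_t\mathbf{a}-\nu(\mathbf{u},\cdot)_V-((\mathbf{y}\cdot\nabla)\mathbf{y},\cdot)$ and diffusion $\mathbf{G}(t,\mathbf{y})$.

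Next I apply the It\^o formula to $\xi_0^2(t)\|\mathbf{u}(t)\|_2^2$. Because $\xi_0$ is adapted and of finite variation with $d(\xi_0^2)=-2\widehat{C}_0(1+\|(a,b)\|_{\mathcal{H}_p(\Gamma)}^2)\xi_0^2\,dt$, the product rule produces the energy balance $-2\nu\xi_0^2\|\mathbf{u}\|_V^2\,dt+\xi_0^2\|\mathbf{G}\|_2^2\,dt+2\xi_0^2(\mathbf{G},\mathbf{u})\,d\mathcal{W}_t$ together with the decisive good term $-2\widehat{C}_0(1+\|(a,b)\|_{\mathcal{H}_p(\Gamma)}^2)\xi_0^2\|\mathbf{u}\|_2^2\,dt$. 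The idea is to dominate every remaining drift contribution by $\tfrac{\nu}{2}\|\mathbf{u}\|_V^2+C(1+\|(a,b)\|_{\mathcal{H}_p(\Gamma)}^2)(1+\|\mathbf{u}\|_2^2)$ and then to fix $\widehat{C}_0$ large enough that the weight term absorbs the factor $C(1+\|(a,b)\|_{\mathcal{H}_p(\Gamma)}^2)\|\mathbf{u}\|_2^2$, leaving only the forcing $C(1+\|(a,b)\|_{\mathcal{H}_p(\Gamma)}^2)\xi_0^2$ and a coercive $\tfrac{3\nu}{2}\xi_0^2\|\mathbf{u}\|_V^2$ on the favourable side.

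All the delicate terms come from the convection. Expanding $((\mathbf{y}\cdot\nabla)\mathbf{y},\mathbf{u})$ with $\mathbf{y}=\mathbf{u}+\mathbf{a}$, the pure part $((\mathbf{u}\cdot\nabla)\mathbf{u},\mathbf{u})$ vanishes by antisymmetry on $V$, while the genuinely problematic piece is the boundary integral $\tfrac12\int_\Gamma a|\mathbf{u}|^2\,d\mathbf{\gamma}$ coming from $((\mathbf{a}\cdot\nabla)\mathbf{u},\mathbf{u})$ after integration by parts (here $\mathbf{a}\cdot\mathbf{n}=a\neq0$). Using the embedding $W_p^{1-1/p}(\Gamma)\hookrightarrow L_\infty(\Gamma)$, valid for $p>2$, together with the trace interpolation \eqref{TT} with $q=2$, I bound $\int_\Gamma|a|\,|\mathbf{u}|^2\le C\|(a,b)\|_{\mathcal{H}_p(\Gamma)}\|\mathbf{u}\|_2\|\nabla\mathbf{u}\|_2$, which after the Young inequality \eqref{yi} is exactly $\tfrac{\nu}{8}\|\mathbf{u}\|_V^2+C\|(a,b)\|_{\mathcal{H}_p(\Gamma)}^2\|\mathbf{u}\|_2^2$. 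The remaining pieces $((\mathbf{u}\cdot\nabla)\mathbf{a},\mathbf{u})$, $((\mathbf{a}\cdot\nabla)\mathbf{a},\mathbf{u})$, the term $(\partial_t\mathbf{a},\mathbf{u})$ and the growth $\|\mathbf{G}\|_2^2$ are all controlled by the Ladyzhenskaya inequality \eqref{LI} with $q=4$, the Korn inequality \eqref{korn}, the Stokes bound \eqref{cal} and the assumption \eqref{G}, each producing only admissible terms of the form fixed above.

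After absorption, integrating in time gives the pathwise weighted balance in which the forcing is retained as $C\int_0^t(1+\|(a,b)\|_{\mathcal{H}_p(\Gamma)}^2)\xi_0^2\,ds$; taking expectation removes the martingale, and for the supremum I apply the Burkholder--Davis--Gundy inequality to $\int_0^t\xi_0^2(\mathbf{G},\mathbf{u})\,d\mathcal{W}_s$, bounding it by $\tfrac12\mathbb{E}\sup_s\xi_0^2\|\mathbf{u}\|_2^2+C\mathbb{E}\int_0^t\xi_0^2\|\mathbf{G}\|_2^2\,ds$, absorbing the supremum and closing by Gronwall---this yields the first estimate. The fourth-power estimate follows the same pattern applied to $\xi_0^4\|\mathbf{u}\|_2^4$: the It\^o correction $4|(\mathbf{G},\mathbf{u})|^2$ and the factor $\|\mathbf{u}\|_2^2$ multiplying the bad terms are absorbed by $-4\widehat{C}_0(1+\|(a,b)\|_{\mathcal{H}_p(\Gamma)}^2)\xi_0^4\|\mathbf{u}\|_2^4$ and by the dissipation, while the residual forcing collapses to $C\xi_0^4(1+\|(a,b)\|_{\mathcal{H}_p(\Gamma)}^4)$ after using $1+\|(a,b)\|_{\mathcal{H}_p(\Gamma)}^2\le 2(1+\|(a,b)\|_{\mathcal{H}_p(\Gamma)}^4)$; the term $\nu^2(\int_0^t\xi_0^2\|\mathbf{u}\|_V^2\,ds)^2$ is then obtained by squaring the weighted balance of the first step (whose forcing already carries $\xi_0^2$), taking expectation, and controlling the square of the stochastic integral by the It\^o isometry and the fourth-power bound. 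The hard part throughout is the boundary convective term $\tfrac12\int_\Gamma a|\mathbf{u}|^2\,d\mathbf{\gamma}$: its estimate is exactly what dictates the exponential weight $\xi_0$ and the size of $\widehat{C}_0$, and it is where the regularity $p>2$ of the boundary datum $a$ is indispensable.
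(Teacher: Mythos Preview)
Your sketch is correct and matches the approach of the cited article \cite{CC16}; note that the present paper does not reprove Theorem~\ref{the_1} but merely invokes Theorem~3.5 of \cite{CC16}, and the key pathwise inequality you derive---$d(\|\mathbf u\|_2^2)+\nu\|\mathbf u\|_V^2\,dt\leqslant 2\widehat C_0(1+\|(a,b)\|_{\mathcal H_p(\Gamma)}^2)(1+\|\mathbf u\|_2^2)\,dt+2(\mathbf G,\mathbf u)\,d\mathcal W_t$---is exactly the formula \eqref{C1} that this paper later quotes from \cite{CC16} in the proof of Proposition~\ref{pop}. Your identification of the boundary convective term $\tfrac12\int_\Gamma a|\mathbf u|^2\,d\mathbf\gamma$ as the one forcing the exponential weight and the hypothesis $p>2$ is also the correct diagnosis.
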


\bigskip
{
\begin{remark}It is worth mentioning that we do not know if the solution  $\mathbf{y}$,  provided by Theorem \ref{the_1} is  integrable  with respect to the variable $\omega\in \Omega$,
 we just  have integrability of its multiplication by the weight $\xi _{0}$, which verifies 
$0<\xi _{0}\leq 1.$
On the other hand, to tackle the control problem, the 
 cost functional $J$ must be well defined, requiring the square integrability of $\mathbf{y}$. 
The  square integrability
of $\mathbf{y}$ holds under suitable integrability of the inverse of the weight.
In Section  \ref{sec3},  we  will impose additional assumptions on the boundary conditions $a, \;b$, which are necessary to deduce the first-order optimality condition, and also guarantee integrability
of $\left(\xi _{0}\right)^{-1}. $

\end{remark}
}

\section{Lipschitz continuity of the control-to-state mapping}

\label{sec4} \setcounter{equation}{0}

\bigskip

This section  shows the Lipschitz continuity of the control-to-state mapping,
which is the first step to study the Gateaux derivative of this mapping.
Here, we denote by 
\begin{equation*}
\widehat{\mathbf{\varphi }}=\mathbf{\varphi }_{1}-\mathbf{\varphi }_{2}
\end{equation*}%
{the difference} of two given functions $\mathbf{\varphi }_{1},\mathbf{%
\varphi }_{2}.$

\begin{theorem}
\label{Lips}Under the assumption \eqref{eq00sec12}, $(a_{i},b_{i})\in%
\mathcal{A}$ and $\mathbf{y}_{i,0}$, $i=1,2$, let $\mathbf{y}_{1}=\mathbf{u}%
_{1}+\mathbf{a}_{1},$\ $\mathbf{y}_{2}=\mathbf{u}_{2}+\mathbf{a}_{2}$ with 
\begin{equation*}
\mathbf{u}_{1},\mathbf{u}_{2}\in C([0,T];H)\cap L_{4}(0,T;V),\quad P\text{%
-a.e.in }\Omega ,
\end{equation*}%
be two solutions of (\ref{NSy}) in the sense of the variational equality $(%
\ref{res1}),$ satisfying the estimates $(\ref{uny})$ \ with two
corresponding boundary conditions $a_{1},\;b_{1}$, $a_{2},$ $b_{2}$ and the
initial conditions 
\begin{equation*}
\mathbf{y}_{1,0}=\mathbf{u}_{1,0}+\mathbf{a}_{1}(0),\qquad \mathbf{y}_{2,0}=%
\mathbf{u}_{2,0}+\mathbf{a}_{2}(0).
\end{equation*}%
Then there exists a constant $\widehat{C}_{1}$ such that the following
estimates hold 
\begin{align}
\mathbb{E}\sup_{s\in \lbrack 0,t]}& \xi _{1}^{2}(s)\left\Vert \widehat{%
\mathbf{u}}(s)\right\Vert _{2}^{2}+2\nu \mathbb{E}\int_{0}^{t}\xi
_{1}^{2}\Vert \widehat{\mathbf{u}}\Vert _{H^{1}}^{2}\,ds\leqslant C\biggl\{%
\mathbb{E}\left\Vert \widehat{\mathbf{u}}_{0}\right\Vert _{2}^{2}  \notag \\
& \quad +\left( \mathbb{E}\int_{0}^{t}||(\widehat{a},\widehat{b})||_{%
\mathcal{H}_{p}(\Gamma )}^{4}\,ds\right) ^{1/2}\biggr\},  \label{2222}
\end{align}%
\begin{align}
\mathbb{E}\sup_{s\in \lbrack 0,t]}& \xi _{1}^{4}(s)\left\Vert \widehat{%
\mathbf{u}}(s)\right\Vert _{2}^{4}+2\nu ^{2}\mathbb{E}\left( \int_{0}^{t}\xi
_{1}^{2}\Vert \widehat{\mathbf{u}}\Vert _{H^{1}}^{2}\,ds\right)
^{2}\leqslant C\biggl\{\mathbb{E}\left\Vert \widehat{\mathbf{u}}%
_{0}\right\Vert _{2}^{4}  \notag \\
& \quad +\mathbb{E}\int_{0}^{t}||(\widehat{a},\widehat{b})||_{\mathcal{H}%
_{p}(\Gamma )}^{4}\,ds\biggr\},  \label{1111}
\end{align}%
where the function $\xi _{1}$ is defined as 
\begin{equation}
\xi _{1}(t)=e^{-\int_{0}^{t}f_{1}(s)ds},  \label{aux}
\end{equation}%
with $f_{1}\in L_{1}(\Omega \times (0,T))$ given by 
\begin{equation}
f_{1}(t)=\widehat{C}_{1}(\nu ^{-1}+1)\left( \Vert (a_{1},b_{1})\Vert _{%
\mathcal{H}_{p}(\Gamma )}^{2}+\Vert (a_{2},b_{2})\Vert _{\mathcal{H}%
_{p}(\Gamma )}^{2}+\Vert \mathbf{u}_{1}\Vert _{V}^{2}+1\right) .
\label{f__12}
\end{equation}%
The constant $\widehat{C}_{1}$ is defined by \ the relations \eqref{f_111}-%
\eqref{f__1}.
\end{theorem}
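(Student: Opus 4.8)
The plan is to derive a weighted energy identity for the difference $\widehat{\mathbf{u}}=\mathbf{u}_1-\mathbf{u}_2$ and to close it via a Gr\"onwall-type argument in which the exponential weight $\xi_1$ is used precisely to absorb the terms proportional to $\|\widehat{\mathbf{u}}\|_2^2$ that the convection and the boundary data generate. First I would subtract the two variational identities \eqref{res1} written for $\mathbf{y}_1=\mathbf{u}_1+\mathbf{a}_1$ and $\mathbf{y}_2=\mathbf{u}_2+\mathbf{a}_2$. Writing $\widehat{\mathbf{y}}=\widehat{\mathbf{u}}+\widehat{\mathbf{a}}$ and using the decomposition $(\mathbf{y}_1\cdot\nabla)\mathbf{y}_1-(\mathbf{y}_2\cdot\nabla)\mathbf{y}_2=(\widehat{\mathbf{y}}\cdot\nabla)\mathbf{y}_1+(\mathbf{y}_2\cdot\nabla)\widehat{\mathbf{y}}$, this produces an equation for $\widehat{\mathbf{u}}$ (tested against $\boldsymbol{\varphi}\in V$) whose drift contains the dissipative term $-\nu(\widehat{\mathbf{y}},\boldsymbol{\varphi})_V$, the boundary source $\int_\Gamma\nu\widehat{b}(\boldsymbol{\varphi}\cdot\bm{\tau})\,d\mathbf{\gamma}$, the two convective contributions, and $-(\partial_t\widehat{\mathbf{a}},\boldsymbol{\varphi})$, with diffusion $\widehat{\mathbf{G}}=\mathbf{G}(s,\mathbf{y}_1)-\mathbf{G}(s,\mathbf{y}_2)$.

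Next I would apply It\^o's formula to $\xi_1^2(t)\|\widehat{\mathbf{u}}(t)\|_2^2$; differentiating the weight contributes the favorable term $-2\xi_1^2 f_1\|\widehat{\mathbf{u}}\|_2^2$. The dissipation $-2\nu\xi_1^2\|\widehat{\mathbf{u}}\|_V^2$ controls $\|\widehat{\mathbf{u}}\|_{H^1}^2$ through Korn's inequality \eqref{korn}. The purely linear sources, namely $\nu(\widehat{\mathbf{a}},\widehat{\mathbf{u}})_V$, the $\widehat{b}$ boundary term, and $(\partial_t\widehat{\mathbf{a}},\widehat{\mathbf{u}})$, are treated by Cauchy--Schwarz, the trace embedding \eqref{aa}$_2$, Young's inequality \eqref{yi}, and Lemma \ref{navier slip}, yielding a small multiple $\tfrac{\nu}{8}\|\widehat{\mathbf{u}}\|_{H^1}^2$ (to be absorbed) plus multiples of $\|(\widehat{a},\widehat{b})\|_{\mathcal{H}_p(\Gamma)}^2$.

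The convective terms are the \emph{main obstacle}. Splitting $b(\widehat{\mathbf{u}},\mathbf{y}_1,\widehat{\mathbf{u}})$ over $\mathbf{y}_1=\mathbf{u}_1+\mathbf{a}_1$ and using the $q=4$ case of \eqref{LI} together with Korn and \eqref{yi} gives $\tfrac{\nu}{8}\|\widehat{\mathbf{u}}\|_{H^1}^2+C\nu^{-1}\left(\|\mathbf{u}_1\|_V^2+\|(a_1,b_1)\|_{\mathcal{H}_p(\Gamma)}^2\right)\|\widehat{\mathbf{u}}\|_2^2$, which is exactly the structure of $f_1$ in \eqref{f__12}. For $b(\mathbf{y}_2,\widehat{\mathbf{u}},\widehat{\mathbf{u}})$, integration by parts (using $\mathrm{div}\,\mathbf{y}_2=0$ and $\mathbf{y}_2\cdot\mathbf{n}=a_2$) leaves the boundary term $\tfrac12\int_\Gamma a_2|\widehat{\mathbf{u}}|^2\,d\mathbf{\gamma}$; here $p>2$ is essential, since then $W_p^{1-1/p}(\Gamma)\hookrightarrow L_\infty(\Gamma)$ yields $\|a_2\|_{L_\infty(\Gamma)}\leq C\|(a_2,b_2)\|_{\mathcal{H}_p(\Gamma)}$, and combined with \eqref{TT} at $q=2$ it produces a further term $C\nu^{-1}\|(a_2,b_2)\|_{\mathcal{H}_p(\Gamma)}^2\|\widehat{\mathbf{u}}\|_2^2$ feeding $f_1$. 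The remaining convective terms carrying $\widehat{\mathbf{a}}$ are genuine sources: after absorbing $\tfrac{\nu}{8}\|\widehat{\mathbf{u}}\|_{H^1}^2$ they leave quantities of the form $\|(\widehat{a},\widehat{b})\|_{\mathcal{H}_p(\Gamma)}^2\big(\|\mathbf{u}_1\|_V^2+\|(a_1,b_1)\|_{\mathcal{H}_p(\Gamma)}^2+1\big)$ weighted by $\xi_1^2$.

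Finally, choosing $\widehat{C}_1$ large enough so that $2\xi_1^2 f_1\|\widehat{\mathbf{u}}\|_2^2$ cancels every $\|\widehat{\mathbf{u}}\|_2^2$ contribution, I would integrate in time, take $\sup_{s\in[0,t]}$ and expectation, and apply the Burkholder--Davis--Gundy inequality to the martingale $\int_0^s\xi_1^2(\widehat{\mathbf{G}},\widehat{\mathbf{u}})\,d\mathcal{W}$, using $\|\widehat{\mathbf{G}}\|_2^2\leq K\|\widehat{\mathbf{y}}\|_2^2\leq 2K(\|\widehat{\mathbf{u}}\|_2^2+\|\widehat{\mathbf{a}}\|_2^2)$ from \eqref{G}; the $\widehat{\mathbf{u}}$ part is absorbed into the left-hand supremum through a small BDG constant and Young's inequality, while $\widehat{\mathbf{a}}$ becomes a source. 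The weighted source integrals are handled by Cauchy--Schwarz over $\Omega\times(0,T)$, where the crucial integrability $\int_0^t\xi_1^2\|\mathbf{u}_1\|_V^2\,ds\leq C$ (a consequence of $\frac{d}{dt}\xi_1^2=-2f_1\xi_1^2$ together with $0<\xi_1\leq1$, so that $\int_0^t\xi_1^2 f_1\,ds\leq\tfrac12$, and $\|\mathbf{u}_1\|_V^2\leq C f_1$) eliminates the $\mathbf{u}_1$ dependence and leaves only $\big(\mathbb{E}\int_0^t\|(\widehat{a},\widehat{b})\|_{\mathcal{H}_p(\Gamma)}^4\,ds\big)^{1/2}$, giving \eqref{2222}. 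Estimate \eqref{1111} follows along the same route applied to $\xi_1^4\|\widehat{\mathbf{u}}\|_2^4$, re-running the It\^o and BDG steps with the fourth power of the weight.
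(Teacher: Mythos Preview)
Your overall strategy---subtract the variational identities, derive an It\^o energy balance for $\widehat{\mathbf{u}}$, absorb the $\|\widehat{\mathbf{u}}\|_2^2$ contributions into the weight $\xi_1$, then use BDG and Gr\"onwall---is exactly the paper's route. Two points deserve correction, however.

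First, your handling of the $\widehat{\mathbf{a}}$-source does not close as written. You aim for a source of the form $\xi_1^2\|(\widehat a,\widehat b)\|_{\mathcal H_p}^2\bigl(\|\mathbf u_1\|_V^2+\dots\bigr)$ and then invoke the pointwise bound $\int_0^t\xi_1^2 f_1\,ds\le\tfrac12$. But to arrive at $\bigl(\mathbb E\int_0^t\|(\widehat a,\widehat b)\|_{\mathcal H_p}^4\bigr)^{1/2}$ via Cauchy--Schwarz you would need $\mathbb E\int_0^t\xi_1^4\|\mathbf u_1\|_V^4\,ds<\infty$, which is not available. The paper instead arranges the Young splittings so that the genuine source carries only the \emph{energy} norm of the state, namely $C(1+\|\mathbf y_1\|_2)\|(\widehat a,\widehat b)\|_{\mathcal H_p}^2$; the factor $(1+\|\mathbf y_1\|_2)$ is then removed by Cauchy--Schwarz together with the a~priori bound \eqref{uny} (using $\xi_1\le\xi_0$, guaranteed by $\widehat C_1\ge\widehat C_0$). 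Your trick with $\int\xi_1^2 f_1\le\tfrac12$ is neat, but it is used in the paper only implicitly and does not replace the appeal to \eqref{uny}.

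Second, after BDG the $\|\widehat{\mathbf u}\|_2^2$ contribution cannot simply be ``absorbed into the left-hand supremum''; the BDG constant is not small. As in the paper, one splits via Young into $\tfrac12\,\mathbb E\sup_s\xi_1^2\|\widehat{\mathbf u}\|_2^2$ (absorbed) plus $C\,\mathbb E\int_0^t\xi_1^2\|\widehat{\mathbf u}\|_2^2\,ds$, and the latter requires a further Gr\"onwall step. For \eqref{1111} the paper does not apply It\^o to $\xi_1^4\|\widehat{\mathbf u}\|_2^4$ but simply squares the integrated inequality obtained at the second-moment level; your It\^o-on-the-fourth-power route would also work, but note the same post-BDG Gr\"onwall issue reappears there.
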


\textbf{Proof.} Let us denote $\widehat{\mathbf{a}}$ the solution of the
Stokes problem \eqref{ha} with $(a,b)=(\widehat{a},\widehat{b})$. Recalling
that $\mathbf{y}_{1}$ and $\mathbf{y}_{2}$ verify the system (\ref{NSy}) in
the sense of Definition \ref{1def}, we have%
\begin{align*}
d\left( \widehat{\mathbf{y}}(t),\boldsymbol{\varphi }\right) & =\left[ -\nu
\left( \widehat{\mathbf{y}},\boldsymbol{\varphi }\right) _{V}+\nu
\int_{\Gamma }\widehat{b}(\boldsymbol{\varphi }\cdot {\bm{\tau }})\,d\mathbf{%
\gamma }-(\widehat{B},\boldsymbol{\varphi )}\,\right] dt \\
& \quad +(\widehat{\mathbf{G}},\boldsymbol{\varphi })\,d{\mathcal{W}}%
_{t},\quad \forall t\in \lbrack 0,T], \\
\widehat{\mathbf{y}}(0)& =\widehat{\mathbf{y}}_{0},\qquad \forall 
\boldsymbol{\varphi }\in V,\quad P\text{-a.e.in }\Omega ,
\end{align*}%
with 
\begin{equation*}
\widehat{B}=\left( \mathbf{y}_{1}\cdot \nabla \right) \mathbf{y}_{1}-\left( 
\mathbf{y}_{2}\cdot \nabla \right) \mathbf{y}_{2}\text{,\qquad }\widehat{%
\mathbf{G}}(t)=\mathbf{G}(t,\mathbf{y}_{1}(t))-\mathbf{G}(t,\mathbf{y}%
_{2}(t)).
\end{equation*}%
Taking $\boldsymbol{\varphi }=\mathbf{e}_{i}$ for each $i\in \mathbb{N},$ we
can verify that the process $\widehat{\mathbf{u}}=\widehat{\mathbf{y}}-%
\widehat{\mathbf{a}}$ satisfies the system%
\begin{eqnarray*}
d\left( \widehat{\mathbf{u}}(t),\mathbf{e}_{i}\right) &=&\left[ -\nu \left( 
\widehat{\mathbf{u}}+\widehat{\mathbf{a}},\mathbf{e}_{i}\right) _{V}-\left(
\left( \mathbf{y}_{2}\cdot \nabla \right) \widehat{\mathbf{u}}\mathbf{,e}%
_{i}\right) +\int_{\Gamma }\nu \widehat{b}(\mathbf{e}_{i}\cdot {\bm{\tau }}%
)\,d\mathbf{\gamma }+\left( \mathbf{U},\mathbf{e}_{i}\right) \,\right] \,dt
\\
&&+(\widehat{\mathbf{G}},\mathbf{e}_{i})\,d{\mathcal{W}}_{t}, \\
\widehat{\mathbf{u}}(0) &=&\widehat{\mathbf{y}}_{0}-\widehat{\mathbf{a}}(0),
\end{eqnarray*}%
with $\mathbf{U}=-\left[ \partial _{t}\widehat{\mathbf{a}}+\left( \left( 
\widehat{\mathbf{u}}+\widehat{\mathbf{a}}\right) \cdot \nabla \right) 
\mathbf{y}_{1}+\left( \mathbf{y}_{2}\cdot \nabla \right) \widehat{\mathbf{a}}%
\right] .$ Hence the It\^{o} formula gives \ 
\begin{align*}
d\left( \left( \widehat{\mathbf{u}}(t),\mathbf{e}_{i}\right) ^{2}\right) &
=2\left( \widehat{\mathbf{u}}(t),\mathbf{e}_{i}\right) [-\nu \left( \widehat{%
\mathbf{u}}+\widehat{\mathbf{a}},\mathbf{e}_{i}\right) _{V}-\left( \left( 
\mathbf{y}_{2}\cdot \nabla \right) \widehat{\mathbf{u}}\mathbf{,e}_{i}\right)
\\
& +\int_{\Gamma }\nu \widehat{b}(\mathbf{e}_{i}\cdot {\bm{\tau }})\,d\mathbf{%
\gamma }+\left( \mathbf{U},\mathbf{e}_{i}\right) \,]\,dt \\
& +2\left( \widehat{\mathbf{u}}(t),\mathbf{e}_{i}\right) (\widehat{\mathbf{G}%
},\mathbf{e}_{i})\,d{\mathcal{W}}_{t}+|(\widehat{\mathbf{G}},\mathbf{e}%
_{i})|^{2}\,dt.
\end{align*}%
Summing these equalities over $i\in \mathbb{N},$ we obtain%
\begin{equation}
d\left( \left\Vert \widehat{\mathbf{u}}\right\Vert _{2}^{2}\right) +2\nu
\left\Vert \widehat{\mathbf{u}}\right\Vert _{V}^{2}\,dt=J\,dt\,+2(\widehat{%
\mathbf{G}},\widehat{\mathbf{u}})\,d{\mathcal{W}}_{t},  \label{UP}
\end{equation}%
where we denote 
\begin{eqnarray*}
J &=&\int_{\Gamma }\left\{ -a_{2}(\widehat{\mathbf{u}}\cdot \bm{\tau }%
)^{2}+2\nu \widehat{b}(\widehat{\mathbf{u}}\cdot \bm{\tau })\,\right\} \,d%
\mathbf{\gamma }-2\left( \left[ \partial _{t}\widehat{\mathbf{a}}+\left(
\left( \widehat{\mathbf{u}}+\widehat{\mathbf{a}}\right) \cdot \nabla \right) 
\mathbf{y}_{1}\right] ,\widehat{\mathbf{u}}\right) \\
&&-2\left( \left( \mathbf{y}_{2}\cdot \nabla \right) \widehat{\mathbf{a}},%
\widehat{\mathbf{u}}\right) -2\nu \left( \widehat{\mathbf{a}},\widehat{%
\mathbf{u}}\right) _{V}+\Vert \widehat{\mathbf{G}}\Vert _{2}^{2} \\
&=&J_{1}+J_{2}+J_{3}+J_{4}+J_{5}.
\end{eqnarray*}%
We have 
\begin{align*}
J_{1}& \leqslant (\Vert a_{2}\Vert _{L_{\infty }(\Gamma )}\,+\nu )\Vert 
\widehat{\mathbf{u}}\Vert _{L_{2}(\Gamma )}^{2}+C\nu \Vert \widehat{b}\Vert
_{L_{2}(\Gamma )}^{2} \\
& \leqslant h_{1}(t)||\widehat{\mathbf{u}}||_{2}^{2}+\frac{\nu }{4}||%
\widehat{\mathbf{u}}||_{V}^{2}+C\nu \Vert \widehat{b}\Vert _{L_{2}(\Gamma
)}^{2}\,
\end{align*}%
with 
\begin{equation*}
h_{1}(t)=\frac{C}{\nu }(\Vert (a_{2},b_{2})\Vert _{\mathcal{H}_{p}(\Gamma
)}\,+\nu )^{2}\in L_{1}(0,T),\quad P\text{-a.e. in }\Omega ,
\end{equation*}%
by \eqref{eq00sec12}. The term $J_{2}$ is estimated as follows 
\begin{eqnarray*}
J_{2} &\leqslant &2\left( \Vert \partial _{t}\widehat{\mathbf{a}}\Vert
_{2}+\,\Vert \widehat{\mathbf{a}}\Vert _{C(\overline{\Omega })}\Vert \nabla 
\mathbf{y}_{1}\Vert _{2}\right) \Vert \widehat{\mathbf{u}}\Vert
_{2}+2\,\Vert \nabla \mathbf{y}_{1}\Vert _{2}\,\Vert \widehat{\mathbf{u}}%
\Vert _{4}^{2} \\
&\leqslant &2\left( \Vert \partial _{t}\widehat{\mathbf{a}}\Vert
_{2}+\,\Vert \widehat{\mathbf{a}}\Vert _{C(\overline{\Omega })}\right)
\left( 1+\Vert \nabla \mathbf{y}_{1}\Vert _{2}\right) \Vert \widehat{\mathbf{%
u}}\Vert _{2} \\
&&+2\Vert \nabla \mathbf{y}_{1}\Vert _{2}\Vert \widehat{\mathbf{u}}\Vert
_{2}\Vert \nabla \widehat{\mathbf{u}}\Vert _{2} \\
&\leqslant &h_{2}(t)\Vert \widehat{\mathbf{u}}\Vert _{2}^{2}+C\Vert (%
\widehat{a},\widehat{b})\Vert _{\mathcal{H}_{p}(\Gamma )}^{2}+\frac{\nu }{4}%
||\widehat{\mathbf{u}}||_{V}^{2}
\end{eqnarray*}%
with 
\begin{equation*}
h_{2}(t)=C\left( 1+(\nu ^{-1}+1)\Vert \nabla \mathbf{y}_{1}\Vert
_{2}^{2}\right) \in L_{1}(0,T),\quad P\text{-a.e. in }\Omega ,
\end{equation*}%
by \eqref{TT} and \eqref{uny}. Using \eqref{LI} with $q=4$, we have 
\begin{eqnarray*}
J_{3} &=&2\left[ \left( \left( \widehat{\mathbf{u}}\cdot \nabla \right) 
\widehat{\mathbf{a}},\widehat{\mathbf{u}}\right) +\left( \left( \widehat{%
\mathbf{a}}\cdot \nabla \right) \widehat{\mathbf{a}},\widehat{\mathbf{u}}%
\right) -\left( \left( \mathbf{y}_{1}\cdot \nabla \right) \widehat{\mathbf{a}%
},\widehat{\mathbf{u}}\right) \right] \\
&\leqslant &2\left[ \,\Vert \widehat{\mathbf{u}}\Vert _{4}^{2}||\nabla 
\widehat{\mathbf{a}}\Vert _{2}+\Vert \widehat{\mathbf{a}}\Vert _{L_{\infty
}}\Vert \nabla \widehat{\mathbf{a}}\Vert _{2}\Vert \widehat{\mathbf{u}}\Vert
_{2}+\Vert \mathbf{y}_{1}\Vert _{4}\Vert \nabla \widehat{\mathbf{a}}\Vert
_{2}\Vert \widehat{\mathbf{u}}\Vert _{4})\right] \\
&\leqslant &\,C\Vert \widehat{\mathbf{u}}\Vert _{2}\Vert \widehat{\mathbf{u}}%
\Vert _{V}||\nabla \widehat{\mathbf{a}}\Vert _{2}+C\Vert \nabla \widehat{%
\mathbf{a}}\Vert _{2}^{2}\Vert \widehat{\mathbf{u}}\Vert _{2} \\
&&+C\Vert \mathbf{y}_{1}\Vert _{2}^{1/2}\Vert \mathbf{y}_{1}\Vert
_{H^{1}}^{1/2}\Vert \nabla \widehat{\mathbf{a}}\Vert _{2}\Vert \widehat{%
\mathbf{u}}\Vert _{2}^{1/2}\Vert \widehat{\mathbf{u}}\Vert _{V}^{1/2} \\
&\leqslant &\,C(\nu ^{-1}+1)\Vert \nabla \widehat{\mathbf{a}}\Vert
_{2}^{2}\Vert \widehat{\mathbf{u}}\Vert _{2}^{2}+\frac{\nu }{8}\Vert 
\widehat{\mathbf{u}}\Vert _{V}^{2}+\Vert \widehat{\mathbf{a}}\Vert
_{L_{\infty }}^{2} \\
&&+\,\frac{C}{\nu }\Vert \mathbf{y}_{1}\Vert _{H^{1}}^{2}\Vert \widehat{%
\mathbf{u}}\Vert _{2}^{2}+\frac{\nu }{8}\Vert \widehat{\mathbf{u}}\Vert
_{V}^{2}+C\Vert \mathbf{y}_{1}\Vert _{2}\Vert \nabla \widehat{\mathbf{a}}%
\Vert _{2}^{2} \\
&\leqslant &\,h_{3}(t)\Vert \widehat{\mathbf{u}}\Vert _{2}^{2}+\frac{\nu }{4}%
\Vert \widehat{\mathbf{u}}\Vert _{V}^{2}+\Vert (\widehat{a},\widehat{b}%
)\Vert _{\mathcal{H}_{p}(\Gamma )}^{2}+C\Vert \mathbf{y}_{1}\Vert _{2}\Vert (%
\widehat{a},\widehat{b})\Vert _{\mathcal{H}_{p}(\Gamma )}^{2}
\end{eqnarray*}%
with 
\begin{equation*}
h_{3}(t)=C\left( (\nu ^{-1}+1)\Vert (\widehat{a},\widehat{b})\Vert _{%
\mathcal{H}_{p}(\Gamma )}^{2}+\nu ^{-1}\Vert \mathbf{y}_{1}\Vert
_{H^{1}}^{2}\right) \in L_{1}(0,T),\quad P\text{-a.e. in }\Omega ,
\end{equation*}%
Finally we have%
\begin{equation*}
J_{4}\leqslant C\nu \,\Vert \widehat{\mathbf{a}}\Vert _{H^{1}}^{2}\,+\frac{%
\nu }{4}||\widehat{\mathbf{u}}||_{V}^{2}\leqslant C\nu \,\,\Vert (\widehat{a}%
,\widehat{b})\Vert _{\mathcal{H}_{p}(\Gamma )}^{2}+\frac{\nu }{4}||\widehat{%
\mathbf{u}}||_{V}^{2}
\end{equation*}%
and 
\begin{equation*}
J_{5}\leqslant K||\widehat{\mathbf{u}}||_{2}^{2}
\end{equation*}%
by the assumption (\ref{G}). Therefore, there exists a specific constant $%
\widehat{C}_{1},$ such that $\widehat{C}_{1}\geqslant \widehat{C}_{0}$ with $%
\widehat{C}_{0}$ is introduced in (\ref{uny})-(\ref{ksi}) and 
\begin{equation}
\frac{1}{2}\left( h_{1}(t)+h_{2}(t)+h_{3}(t)+K\right) \ \leqslant f_{1}(t),
\label{f_111}
\end{equation}%
where%
\begin{equation}
f_{1}(t)=\widehat{C}_{1}(\nu ^{-1}+1)\left( \Vert (a_{1},b_{1})\Vert _{%
\mathcal{H}_{p}(\Gamma )}^{2}+\Vert (a_{2},b_{2})\Vert _{\mathcal{H}%
_{p}(\Gamma )}^{2}+\Vert \mathbf{u}_{1}\Vert _{V}^{2}+1\right) .
\label{f__1}
\end{equation}%
{Combining } the above deduced estimates for $J_{i},$ $i=1,...,5,$ we get%
\begin{eqnarray}
J &\leqslant &\left( h_{1}(t)+h_{2}(t)+h_{3}(t)+K\right) \Vert \widehat{%
\mathbf{u}}\Vert _{2}^{2}+\nu ||\widehat{\mathbf{u}}||_{V}^{2}  \notag \\
&&+C\left\{ \nu \Vert \widehat{b}\Vert _{L_{2}(\Gamma )}^{2}+\,(1+\nu )\Vert
(\widehat{a},\widehat{b})\Vert _{\mathcal{H}_{p}(\Gamma )}^{2}+\Vert \mathbf{%
y}_{1}\Vert _{2}||(\widehat{a},\widehat{b})||_{\mathcal{H}_{p}(\Gamma
)}^{2}\right\}  \notag \\
&\leqslant &2f_{1}(t)\Vert \widehat{\mathbf{u}}\Vert _{2}^{2}+\nu ||\widehat{%
\mathbf{u}}||_{V}^{2}+C\left( 1+\Vert \mathbf{y}_{1}\Vert _{2}\right) \Vert (%
\widehat{a},\widehat{b})\Vert _{\mathcal{H}_{p}(\Gamma )}^{2}.  \label{J}
\end{eqnarray}

Now introducing the function 
\begin{equation}
\xi _{1}(t)=e^{-\int_{0}^{t}f_{1}(s)ds}  \label{auxx}
\end{equation}%
and applying It\^{o}'s formula to (\ref{UP}), we infer%
\begin{eqnarray*}
d\left( \xi _{1}^{2}(t)\left\Vert \widehat{\mathbf{u}}\right\Vert
_{2}^{2}\right) &+&2\nu \xi _{1}^{2}(t)\left\Vert \widehat{\mathbf{u}}%
\right\Vert _{V}^{2}\,dt=\xi _{1}^{2}(t)J\,dt\, \\
&+&2\xi _{1}^{2}(t)(\widehat{\mathbf{G}},\widehat{\mathbf{u}})\,d{\mathcal{W}%
}_{t}-2f_{1}(t)\xi _{1}^{2}(t)||\widehat{\mathbf{u}}||_{2}^{2}\,dt.
\end{eqnarray*}%
Integrating it over the interval $[0,t]$ and using the estimate (\ref{J}),
we derive 
\begin{align}
\xi _{1}^{2}(t)\left\Vert \widehat{\mathbf{u}}(t)\right\Vert _{2}^{2}+\nu
\int_{0}^{t}\xi _{1}^{2}\Vert \widehat{\mathbf{u}}\Vert _{V}^{2}\,ds&
\leqslant \left\Vert \widehat{\mathbf{u}}_{0}\right\Vert
_{2}^{2}+C\int_{0}^{t}\xi _{1}^{2}\left( 1+\Vert \mathbf{y}_{1}\Vert
_{2}\right) ||(\widehat{a},\widehat{b})||_{\mathcal{H}_{p}(\Gamma )}^{2}\,ds
\notag \\
& \quad +2\int_{0}^{t}\xi _{1}^{2}(\widehat{\mathbf{G}},\widehat{\mathbf{u}}%
)\,d{\mathcal{W}}_{s},  \label{111}
\end{align}%
which implies 
\begin{align*}
\sup_{s\in \lbrack 0,t]}\xi _{1}^{2}(s)\left\Vert \widehat{\mathbf{u}}%
(s)\right\Vert _{2}^{2}& +\nu \int_{0}^{t}\xi _{1}^{2}\Vert \widehat{\mathbf{%
u}}\Vert _{V}^{2}\,ds \\
& \leqslant \left\Vert \widehat{\mathbf{u}}_{0}\right\Vert
_{2}^{2}+C\int_{0}^{t}\xi _{1}^{2}\left( 1+\Vert \mathbf{y}_{1}\Vert
_{2}\right) ||(\widehat{a},\widehat{b})||_{\mathcal{H}_{p}(\Gamma )}^{2}\,ds
\\
& \quad +2\sup_{s\in \lbrack 0,t]}\int_{0}^{s}\xi _{1}^{2}(\widehat{\mathbf{G%
}},\widehat{\mathbf{u}})\,d{\mathcal{W}}_{r}.
\end{align*}%
Taking the expectation in this inequality, using the assumption (\ref{G})
and the Burkholder-Davis-Gundy inequality 
\begin{align*}
\mathbb{E}\sup_{s\in \lbrack 0,t]}\left\vert \int_{0}^{s}\xi _{1}^{2}(r)(%
\widehat{\mathbf{G}},\widehat{\mathbf{u}})\,\,d{\mathcal{W}}_{r}\right\vert
& \leqslant \mathbb{E}\left( \int_{0}^{t}\xi _{1}^{4}\left\vert \left( 
\mathbf{G}(s,\mathbf{y}_{1})-\mathbf{G}(s,\mathbf{y}_{2}),\widehat{\mathbf{u}%
}\right) \right\vert ^{2}\,ds\right) ^{\frac{1}{2}} \\
& \leqslant \frac{1}{2}\,\mathbb{E}\sup_{s\in \lbrack 0,t]}\xi
_{1}^{2}(s)\Vert \widehat{\mathbf{u}}(s)\Vert _{2}^{2}+C\mathbb{E}%
\int_{0}^{t}\xi _{1}^{2}(\left\Vert \widehat{\mathbf{u}}\right\Vert
_{2}^{2}+\left\Vert \widehat{\mathbf{a}}\right\Vert _{2}^{2})\,ds,
\end{align*}%
we deduce 
\begin{align*}
\frac{1}{2}\mathbb{E}\sup_{s\in \lbrack 0,t]}&\xi _{1}^{2}(s)\left\Vert 
\widehat{\mathbf{u}}(s)\right\Vert _{2}^{2} +\nu \mathbb{E}\int_{0}^{t}\xi
_{1}^{2}\Vert \widehat{\mathbf{u}}\Vert _{V}^{2}\,ds \\
\leqslant& C\mathbb{E}\int_{0}^{t}\sup_{s\in \lbrack 0,t]}\xi
_{1}^{2}(s)\Vert \widehat{\mathbf{u}}(s)\Vert _{2}^{2}\,ds+C\mathbb{E}%
\left\Vert \widehat{\mathbf{u}}_{0}\right\Vert _{2}^{2} \\
&+C\left( \mathbb{E}\int_{0}^{t}\xi _{1}^{2}\left( 1+\Vert \mathbf{y}%
_{1}\Vert _{2}^{2}\right) \,ds\right) ^{1/2}\left( \mathbb{E}\int_{0}^{t}\xi
_{1}^{2}||(\widehat{a},\widehat{b})||_{\mathcal{H}_{p}(\Gamma
)}^{4}\,ds\right) ^{1/2}
\end{align*}%
by (\ref{cal}).  Then the { Gr\"onwall } inequality yields 
\begin{align*}
\mathbb{E}\sup_{s\in \lbrack 0,t]}&\xi _{1}^{2}(s)\left\Vert \widehat{%
\mathbf{u}}(s)\right\Vert _{2}^{2} +2\nu \mathbb{E}\int_{0}^{t}\xi
_{1}^{2}\Vert \widehat{\mathbf{u}}\Vert _{V}^{2}\,ds\leqslant C\biggl\{%
\mathbb{E}\left\Vert \widehat{\mathbf{u}}_{0}\right\Vert _{2}^{2} \\
&+\left( \mathbb{E}\int_{0}^{t}\xi _{1}^{2}\left( 1+\Vert \mathbf{y}%
_{1}\Vert _{2}^{2}\right) \,ds\right) ^{1/2}\left( \mathbb{E}\int_{0}^{t}\xi
_{1}^{2}||(\widehat{a},\widehat{b})||_{\mathcal{H}_{p}(\Gamma
)}^{4}\,ds\right) ^{1/2}\biggr\},
\end{align*}%
which implies \eqref{2222}, applying (\ref{cal}), (\ref{uny}), (\ref{ksi})
and the property that $\xi _{1}\leqslant 1$ a.e. in $\Omega \times \lbrack
0,T]$.

On the other hand, taking the square of \eqref{111}, we infer that 
\begin{align*}
\mathbb{E}\sup_{s\in \lbrack 0,t]}& \xi _{1}^{4}(s)\left\Vert \widehat{%
\mathbf{u}}(s)\right\Vert _{2}^{4}+\nu ^{2}\mathbb{E}\left( \int_{0}^{t}\xi
_{1}^{2}\Vert \widehat{\mathbf{u}}\Vert _{V}^{2}\,ds\right) ^{2} \\
& \leqslant C\mathbb{E}\left\Vert \widehat{\mathbf{u}}_{0}\right\Vert
_{2}^{4}+C\mathbb{E}\left( \int_{0}^{t}\xi _{1}^{2}\left( 1+\Vert \mathbf{y}%
_{1}\Vert _{2}\right) ||(\widehat{a},\widehat{b})||_{\mathcal{H}_{p}(\Gamma
)}^{2}\,ds\right) ^{2} \\
& \quad +C\mathbb{E}\sup_{s\in \lbrack 0,t]}\left( \int_{0}^{s}\xi
_{1}^{2}(r)(\widehat{\mathbf{G}},\widehat{\mathbf{u}})\,d{\mathcal{W}}%
_{r}\right) ^{2} \\
& \leqslant C\mathbb{E}\left\Vert \widehat{\mathbf{u}}_{0}\right\Vert
_{2}^{4}+C\mathbb{E}\int_{0}^{t}\xi _{1}^{2}\left( 1+\Vert \mathbf{y}%
_{1}\Vert _{2}^{2}\right) \,ds\times \mathbb{E}\int_{0}^{t}\xi _{1}^{2}||(%
\widehat{a},\widehat{b})||_{\mathcal{H}_{p}(\Gamma )}^{4}\,ds \\
& \quad +C\mathbb{E}\sup_{s\in \lbrack 0,t]}\left( \int_{0}^{s}\xi _{1}^{2}(%
\widehat{\mathbf{G}},\widehat{\mathbf{u}})\,d{\mathcal{W}}_{r}\right) ^{2}.
\end{align*}%
The assumption (\ref{G}) and the Burkholder-Davis-Gundy inequality 
\begin{align*}
\mathbb{E}\sup_{s\in \lbrack 0,t]}&\left\vert \int_{0}^{s}\xi _{1}^{2}(r)(%
\widehat{\mathbf{G}},\widehat{\mathbf{u}})\,\,d{\mathcal{W}}_{r}\right\vert
^{2}\leqslant \mathbb{E}\int_{0}^{t}\xi _{1}^{4}(s)\left\vert \left( \mathbf{%
G}(s,\mathbf{y}_{1})-\mathbf{G}(s,\mathbf{y}_{2}),\widehat{\mathbf{u}}%
\right) \right\vert ^{2}\,ds \\
& \leqslant \frac{1}{2}\,\mathbb{E}\sup_{s\in \lbrack 0,t]}\xi
_{1}^{4}(s)\Vert \widehat{\mathbf{u}}(s)\Vert _{2}^{4}+C\mathbb{E}%
\int_{0}^{t}\xi _{1}^{4}(\left\Vert \widehat{\mathbf{u}}\right\Vert
_{2}^{4}+\left\Vert \widehat{\mathbf{a}}\right\Vert _{2}^{4})\,ds
\end{align*}%
allow to deduce the following { Gr\"onwall } inequality 
\begin{align*}
\frac{1}{2}\mathbb{E}\sup_{s\in \lbrack 0,t]}&\xi _{1}^{4}(s)\left\Vert 
\widehat{\mathbf{u}}(s)\right\Vert _{2}^{4} +\nu ^{2}\mathbb{E}\left(
\int_{0}^{t}\xi _{1}^{2}\Vert \widehat{\mathbf{u}}\Vert _{V}^{2}\,ds\right)
^{2} \\
& \leqslant C\mathbb{E}\left\Vert \widehat{\mathbf{u}}_{0}\right\Vert
_{2}^{4}+C\mathbb{E}\int_{0}^{t}\xi _{1}^{2}\left( 1+\Vert \mathbf{y}%
_{1}\Vert _{2}^{2}\right) \,ds\times \mathbb{E}\int_{0}^{t}\xi _{1}^{2}||(%
\widehat{a},\widehat{b})||_{\mathcal{H}_{p}(\Gamma )}^{4}\,ds \\
& \quad +C\mathbb{E}\int_{0}^{t}\xi _{1}^{4}\left\Vert \widehat{\mathbf{a}}%
\right\Vert _{2}^{4}\,ds+C\mathbb{E}\int_{0}^{t}\sup_{r\in \lbrack 0,s]}\xi
_{1}^{4}(r)\left\Vert \widehat{\mathbf{u}}(r)\right\Vert _{2}^{4}\,ds,
\end{align*}%
implying the inequality 
\begin{align*}
\mathbb{E}\sup_{s\in \lbrack 0,t]}&\xi _{1}^{4}(s)\left\Vert \widehat{%
\mathbf{u}}(s)\right\Vert _{2}^{4} +2\nu ^{2}\mathbb{E}\left(
\int_{0}^{t}\xi _{1}^{2}\Vert \widehat{\mathbf{u}}\Vert _{V}^{2}\,ds\right)
^{2} \\
& \leqslant C\biggl\{\mathbb{E}\left\Vert \widehat{\mathbf{u}}%
_{0}\right\Vert _{2}^{4}+\mathbb{E}\int_{0}^{t}\xi _{1}^{2}\left( 1+\Vert 
\mathbf{y}_{1}\Vert _{2}^{2}\right) \,ds\times \mathbb{E}\int_{0}^{t}\xi
_{1}^{2}||(\widehat{a},\widehat{b})||_{\mathcal{H}_{p}(\Gamma )}^{4}\,ds \\
& \quad +C\mathbb{E}\int_{0}^{t}\xi _{1}^{4}\left\Vert \widehat{\mathbf{a}}%
\right\Vert _{2}^{4}\,ds\biggr\},
\end{align*}%
which gives \eqref{1111}, using (\ref{cal}), (\ref{uny}), (\ref{ksi}) and $%
\xi _{1}\leqslant 1$ a.e. in $\Omega \times \lbrack 0,T]$.$\hfill
\blacksquare $

\bigskip

\section{Solvability of control problem}

\label{sec3} \setcounter{equation}{0}

\bigskip

The main goal of this paper is to control the solution of the system (\ref%
{NSy}) by boundary values $(a,b)$. From now on, we assume these boundary
values belong to the space of $\mathcal{H}_{p}(\Gamma )$-valued bounded
stochastic processes (c.f. \cite{B99}, \cite{L00}, \cite{L02})); namely we
defined the space $\mathcal{A}$ of admissible controls as a 
{
bounded subset of 
$L_{\infty}(\Omega \times (0,T);\mathcal{H}_{p}(\Gamma ))$, which is \textit{compact}
in   $L_{2}(\Omega \times (0,T);\mathcal{H}_{p}(\Gamma )).$
}

The cost functional is given by%
\begin{equation}
\displaystyle J(a,b,\mathbf{y})=\mathbb{E}\int_{{\mathcal{O}}_{T}}\frac{1}{2}%
|\mathbf{y}-\mathbf{y}_{d}|^{2}\,d\mathbf{x}dt+\mathbb{E}\int_{\Gamma _{T}}(%
\frac{\lambda _{1}}{2}|a|^{2}+\frac{\lambda _{2}}{2}|b|^{2})\,d\mathbf{%
\gamma }dt,  \label{cost}
\end{equation}%
where $\mathbf{y}_{d}\in L_{2}(\Omega \times {\mathcal{O}}_{T})$ is a
desired target field and $\lambda _{1},\lambda _{2}>0.$ We aim to control
the solution $\mathbf{y}$ through the minimization of the cost functional (%
\ref{cost}) over $\mathcal{A}$ and constrained to \eqref{NSy}. More
precisely, our goal is to solve the following problem 
\begin{equation*}
(\mathcal{P})\left\{ 
\begin{array}{l}
\underset{(a,b)}{\mbox{minimize}}\{J(a,b,\mathbf{y}):~(a,b)\in \mathcal{A}%
\quad \text{and}\quad \\ 
\\ 
\mathbf{y}\mbox{  is  the solution of  the system }\eqref{NSy}\mbox{  for   }%
(a,b)\in \mathcal{A}\}.%
\end{array}%
\vspace{3mm}\quad \right.
\end{equation*}%
Let us notice that for $(a,b)\in \mathcal{A}$, the solution $\mathbf{y}=%
\mathbf{u}+\mathbf{a}$ of the state equation \eqref{res1} satisfies 
\begin{equation*}
\mathbf{y}\in L_{4}(\Omega ;C([0,T];L_{2}(\mathcal{O})))\subset L_{2}(\Omega
\times \mathcal{O}_{T}),
\end{equation*}%
ensuring that the cost functional \eqref{cost} is well defined.

\bigskip

{The existence result stated in Theorem 4.1 of \cite{CC16} applies, and the optimal solution belongs to $\mathcal{A}$, namely we have the next theorem.}
\begin{teo}
\label{main_existence} Let  $\mathbf{y}%
_{0}$ verify the assumptions \eqref{eq00sec12}. Then there exists at least
one solution for the optimal control problem $(\mathcal{P}).$
\end{teo}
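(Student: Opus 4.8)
The plan is to prove existence of an optimal control for problem $(\mathcal{P})$ via the direct method of the calculus of variations, exploiting the compactness built into the admissible set $\mathcal{A}$ together with the Lipschitz continuity of the control-to-state mapping established in Theorem \ref{Lips}. First I would observe that the cost functional $J$ is bounded below by $0$, so that the infimum
\begin{equation*}
\iota=\inf\{J(a,b,\mathbf{y}):(a,b)\in\mathcal{A},\ \mathbf{y}\text{ solves }\eqref{NSy}\}
\end{equation*}
is finite and nonnegative, and $\mathcal{A}$ is nonempty. I would then extract a minimizing sequence $\{(a_n,b_n)\}\subset\mathcal{A}$ with associated states $\mathbf{y}_n=\mathbf{u}_n+\mathbf{a}_n$ solving \eqref{NSy} in the sense of \eqref{res1}, such that $J(a_n,b_n,\mathbf{y}_n)\to\iota$.

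Next I would use the compactness of $\mathcal{A}$ in $L_{2}(\Omega\times(0,T);\mathcal{H}_{p}(\Gamma))$ to pass to a subsequence (not relabelled) so that $(a_n,b_n)\to(\bar a,\bar b)$ strongly in $L_{2}(\Omega\times(0,T);\mathcal{H}_{p}(\Gamma))$, with the limit $(\bar a,\bar b)\in\mathcal{A}$ by the closedness inherent in compactness. The strong convergence of the controls is the crucial gain from working with a compact admissible set rather than merely a bounded one: it lets me avoid the delicate identification of nonlinear limits that would otherwise require stochastic compactness arguments. In particular, applying the Lipschitz estimate \eqref{2222} of Theorem \ref{Lips} to the pairs $(a_n,b_n)$ and $(a_m,b_m)$, with $\widehat{a}=a_n-a_m$, $\widehat{b}=b_n-b_m$ and equal initial data, shows that $\{\xi_1\mathbf{u}_n\}$ is Cauchy and hence that the states converge; because the controls are uniformly bounded in $L_{\infty}(\Omega\times(0,T);\mathcal{H}_{p}(\Gamma))$, the weights $\xi_1$ are bounded away from zero in the relevant integrals, so I recover strong convergence of $\mathbf{y}_n$ to the state $\bar{\mathbf{y}}=\bar{\mathbf{u}}+\bar{\mathbf{a}}$ associated with $(\bar a,\bar b)$ in $L_{2}(\Omega\times\mathcal{O}_T)$.

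I would then verify that $\bar{\mathbf{y}}$ is indeed the strong solution corresponding to $(\bar a,\bar b)$: the Stokes lifts $\mathbf{a}_n\to\bar{\mathbf{a}}$ by the linear continuous dependence \eqref{cal}, and the strong convergence of $\mathbf{u}_n$ together with the Lipschitz and linear-growth bounds \eqref{G} on $\mathbf{G}$ permits passage to the limit in each term of the variational identity \eqref{res1}, including the stochastic integral, so that the limit satisfies \eqref{res1} with data $(\bar a,\bar b)$. Finally I would conclude by lower semicontinuity of $J$: the strong convergence $\mathbf{y}_n\to\bar{\mathbf{y}}$ in $L_{2}(\Omega\times\mathcal{O}_T)$ handles the tracking term, while the control penalty terms pass via weak lower semicontinuity of the $L_{2}$-norms (or directly by the strong $\mathcal{H}_p$-convergence, which controls the $L_2(\Gamma)$ components of $a,b$), giving $J(\bar a,\bar b,\bar{\mathbf{y}})\le\liminf_n J(a_n,b_n,\mathbf{y}_n)=\iota$, whence $(\bar a,\bar b,\bar{\mathbf{y}})$ is optimal.

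I expect the main obstacle to be the passage to the limit in the nonlinear convective term and especially in the stochastic integral of \eqref{res1}. The convective term $((\mathbf{y}_n\cdot\nabla)\mathbf{y}_n,\boldsymbol{\varphi})$ is quadratic, so I would need the strong convergence of $\mathbf{u}_n$ in $L_2(0,T;V)$-type norms supplied by \eqref{2222}, combined with the higher-integrability estimates \eqref{uny}, to control the product; the weight $\xi_1$ must be removed carefully using the uniform $L_\infty$-bound on the controls that characterises $\mathcal{A}$. For the stochastic term I would invoke the Lipschitz property \eqref{G} and an It\^o-isometry estimate to show $\int_0^t(\mathbf{G}(s,\mathbf{y}_n),\boldsymbol{\varphi})\,d\mathcal{W}_s\to\int_0^t(\mathbf{G}(s,\bar{\mathbf{y}}),\boldsymbol{\varphi})\,d\mathcal{W}_s$ in $L_2(\Omega)$ along the subsequence. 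Since Theorem \ref{main_existence} is stated as a direct consequence of Theorem 4.1 of \cite{CC16}, an alternative and shorter route is to cite that result once the convergence framework above is in place.
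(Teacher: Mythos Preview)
The paper does not give a self-contained proof of Theorem \ref{main_existence}: it simply invokes Theorem~4.1 of \cite{CC16} and observes that the present admissible set $\mathcal{A}$ fits into that framework. Your proposal is therefore not a comparison to a proof in the paper but an attempt to supply one, via the direct method. The overall strategy --- minimizing sequence, compactness of $\mathcal{A}$ in $L_2(\Omega\times(0,T);\mathcal{H}_p(\Gamma))$, stability of the state, lower semicontinuity of $J$ --- is sound in spirit and is indeed how such existence results are proved.

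There is, however, a concrete gap in your removal of the weight $\xi_1$. You write that ``because the controls are uniformly bounded in $L_\infty$, the weights $\xi_1$ are bounded away from zero.'' This is not correct: by \eqref{aux}--\eqref{f__12}, $\xi_1$ contains the factor $\exp\bigl(-\widehat C_1(\nu^{-1}+1)\int_0^t\|\mathbf{u}_1\|_V^2\,ds\bigr)$, which depends on the \emph{state} $\mathbf{u}_1$, not only on the boundary data. The $L_\infty$ bound on the controls makes $\xi_0$ of \eqref{ksi} uniformly bounded below (so Theorem \ref{the_1} gives uniform unweighted $L_4(\Omega;C([0,T];H))$ bounds on $\mathbf{u}_n$), but it does \emph{not} make $\xi_1$ uniformly bounded below, and under the hypotheses of Theorem \ref{main_existence} you do not have the exponential integrability of Proposition \ref{pop} (which requires the extra assumptions \eqref{bound0}, \eqref{GG}) to control $\mathbb{E}\,\xi_1^{-p}$.

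Your argument can be repaired. Once $(\bar a,\bar b)\in\mathcal{A}$ is the strong $L_2$-limit (hence, by interpolation with the $L_\infty$ bound, also the $L_4$-limit) of $(a_n,b_n)$, apply Theorem \ref{Lips} with $(a_1,b_1)=(\bar a,\bar b)$ fixed and $(a_2,b_2)=(a_n,b_n)$. Then the state $\mathbf{u}_1=\bar{\mathbf{u}}$ in \eqref{f__12} is fixed, and since the control contributions to $f_1$ are uniformly bounded, $\xi_1$ is bounded below by a \emph{fixed} positive random variable $\tilde\xi=\exp\bigl(-C\int_0^T(1+\|\bar{\mathbf{u}}\|_V^2)\,ds\bigr)$, independent of $n$. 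Estimate \eqref{2222} then gives $\tilde\xi^2\sup_s\|\mathbf{u}_n-\bar{\mathbf{u}}\|_2^2\to 0$ in $L_1(\Omega)$; along a subsequence this goes to zero $P$-a.s., and since $\tilde\xi>0$ a.s.\ you get $\sup_s\|\mathbf{u}_n-\bar{\mathbf{u}}\|_2^2\to 0$ a.s. Combining this with the uniform $L_4(\Omega;C([0,T];H))$ bound on $\mathbf{u}_n$ (coming from $\xi_0$ bounded below) and Vitali's theorem yields $\mathbf{y}_n\to\bar{\mathbf{y}}$ in $L_2(\Omega\times\mathcal{O}_T)$, after which your passage to the limit in \eqref{res1} and in $J$ goes through as you describe. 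The Cauchy-sequence formulation you wrote (comparing $(a_n,b_n)$ with $(a_m,b_m)$) is less convenient precisely because the weight then varies with $n$.
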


{
\begin{remark}
It is worth mentioning that the existence result established in \cite{CC16} does not require the admissible set $\mathcal{A}$  to be bounded in $L_{\infty}(\Omega \times (0,T);\mathcal{H}_{p}(\Gamma ))$.
However, the deduction of first-order optimality conditions is more demanding, requiring integrability of the Gâteaux derivative of the control-to-state mapping. This will be achieved proving an exponential integrability condition for the state  in Proposition \ref{pop} below.
For that we need $\mathcal{A}$  to be bounded in $L_{\infty}(\Omega \times (0,T);\mathcal{H}_{p}(\Gamma ))$.
This requirement comes from the structure of the stochastic Navier-Stokes equation with multiplicative noise.
Roughly speaking, this means that the  control (in an optimal way) the stochastic dynamics inside the domain, through the control actions on the boundary, it is possible even if the control action have some randomness, however this randomness should be bounded (see the assumption (vi) on p. 40 of \cite{L00} and the assumption on \it{bounded controls} given on p. 42 of \cite{B99}). 
A very particular case corresponds to deterministic control actions.
\end{remark}
}

The next sections are devoted to establishing first-order optimality
conditions.

\section{Exponential integrability of the state}


\label{sec5} \setcounter{equation}{0}  According to Sections \ref{sec2}, \ref%
{sec3}, let $\mathbf{y}=\mathbf{u}+\mathbf{a}$ be the unique solution of the
stochastic differential equation\ (\ref{NSy}) satisfying the estimates (\ref%
{uny}) { and } with data $(a,b),\mathbf{u}_{0}$ verifying (\ref{eq00sec12}).
Hereafter, we assume the following additional assumptions on the data 
\begin{equation}
(a,b)\in \mathcal{A},\qquad ||\mathbf{u}_{0}||_{2}\in L_{\infty }(\Omega ),
\label{bound0}
\end{equation}%
in order to deduce a suitable exponential integrability condition for the
stochastic process $\mathbf{y}$. We also introduce additional hypothesis on
the diffusion operator $\mathbf{G}$, namely $\mathbf{G}$ is bounded by a
positive constant $L$ in the space $H,$ such that\ 
\begin{equation}
||\mathbf{G}\left( t,\mathbf{y}\right) ||_{2}^{2}\leqslant \frac{L}{1+||%
\mathbf{y}||_{2}^{2}}\quad \quad \text{for a.e. \ }t\in \lbrack 0,T],\quad
\forall \mathbf{y}\in H.  \label{GG}
\end{equation}%
Since $\mathcal{A}$ is a closed subset of $L_{\infty }(\Omega \times (0,T);%
\mathcal{H}_{p}(\Gamma ))$, we can take the real number 
\begin{equation*}
r_{\ast }=\sup_{(a,b)\in \mathcal{A}}\,2\widehat{C}_{0}(1+||(a,b)||_{L_{%
\infty }(\Omega _{T};\mathcal{H}_{p}(\Gamma ))}^{2}),
\end{equation*}%
where the constant $\widehat{C}_{0}$ is introduced in (\ref{ksi}) of Theorem %
\ref{the_1}, and define the time dependent functions $\lambda _{\ast }(t),$ $%
\beta _{\ast }(t)$\ and the constants $A_{\ast },$ $B_{\ast }$ by%
\begin{eqnarray}
\lambda _{\ast }(t) &=&\frac{\nu e^{-r_{\ast }t}}{L},\quad A_{\ast }=\frac{%
\nu ^{2}e^{-2r_{\ast }T}}{2L},  \notag \\
\beta _{\ast }(t) &=&\frac{\nu e^{-4\left( r_{\ast }+L\right) t}}{8L},\quad
B_{\ast }=\frac{\nu ^{2}e^{-8\left( r_{\ast }+L\right) T}}{8L}.  \label{aaa}
\end{eqnarray}

Let us note that there exists a constant $\widehat{C},$ depending only on ${%
\mathcal{O}},$ such that 
\begin{equation}
||\mathbf{v}||_{2}^{4}\leqslant \widehat{C}||\mathbf{v}||_{2}^{2}||\mathbf{v}%
||_{V}^{2},\qquad \forall \mathbf{v}\in V,  \label{CCC}
\end{equation}%
being a particular case of the inequality (\ref{LI}).

Let us mention that the main arguments to show the exponential integrability
of the stochastic process $\mathbf{y}$ rely on the structure of the first
equation in (\ref{NSy}) for $\mathbf{y}$, and on the martingale property of
the exponential process.

\begin{proposition}
\label{pop} Assume that the data $(a,b)\in \mathcal{A},$ $\mathbf{u}_{0}$\
and $\mathbf{G}$\ \ satisfy (\ref{G}), (\ref{eq00sec12}), (\ref{bound0}) and
(\ref{GG}). Then there exist positive constants $C$, such that the following
estimates are valid 
\begin{eqnarray}
\mathbb{E}\,\exp \left( \lambda _{\ast }(t)e^{-tr_{\ast }}\left\Vert \mathbf{%
u}(t)\right\Vert _{2}^{2}\right) &\leqslant &C,\quad \mathbb{E}\,\exp %
\biggl\{A_{\ast }\int_{0}^{t}\left\Vert \mathbf{u}\right\Vert _{V}^{2}\,ds%
\biggr\}\leqslant C  \notag \\
\mathbb{E}\,\exp (\beta _{\ast }(t)e^{-4\left( r_{\ast }+L\right)
t}\left\Vert \mathbf{u}(t)\right\Vert _{2}^{4}) &\leqslant &C,\quad \mathbb{E%
}\,\exp \biggl\{B_{\ast }\int_{0}^{t}\left\Vert \mathbf{u}\right\Vert
_{2}^{2}\left\Vert \mathbf{u}\right\Vert _{V}^{2}\,ds\biggr\}\leqslant C 
\notag \\
\mathbb{E}\,\exp \left( \frac{B_{\ast }}{\widehat{C}}\int_{0}^{t}\left\Vert 
\mathbf{u}\right\Vert _{2}^{4}ds\right) &\leqslant &C,\qquad \forall t\in
\lbrack 0,T],  \label{yy}
\end{eqnarray}%
with $\lambda _{\ast },\;A_{\ast },$ $\beta _{\ast },B_{\ast }$\ and $%
\widehat{C}$ defined by (\ref{aaa}) and (\ref{CCC}).
\end{proposition}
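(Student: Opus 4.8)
The plan is to combine the energy identity for a single solution with an exponential--martingale argument, the decisive point being that assumption \eqref{GG} renders the quadratic variation of the noise deterministically controlled. First I would apply It\^o's formula to $\|\mathbf{u}(t)\|_2^2$ in the variational formulation \eqref{res1}, proceeding as in the proofs of Theorems \ref{the_1} and \ref{Lips} but without taking differences. Estimating the Stokes--corrector, convective and boundary contributions, absorbing part of the dissipation, using \eqref{GG} to bound $\|\mathbf{G}(t,\mathbf{y})\|_2^2\leqslant L$, and finally invoking \eqref{bound0} to replace the random factor $2\widehat{C}_0(1+\|(a,b)\|_{\mathcal{H}_p(\Gamma)}^2)$ by the deterministic constant $r_\ast$, I obtain a pathwise inequality of the form
\[
d\left(\|\mathbf{u}\|_2^2\right)+\nu\|\mathbf{u}\|_V^2\,dt\leqslant\left(r_\ast\|\mathbf{u}\|_2^2+C\right)dt+2(\mathbf{G}(t,\mathbf{y}),\mathbf{u})\,d\mathcal{W}_t,
\]
with $C$ deterministic.

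The decisive observation is that \eqref{GG} controls the quadratic variation of the stochastic integral: indeed $|(\mathbf{G}(t,\mathbf{y}),\mathbf{u})|^2\leqslant\|\mathbf{G}(t,\mathbf{y})\|_2^2\|\mathbf{u}\|_2^2\leqslant L\,\|\mathbf{u}\|_2^2/(1+\|\mathbf{y}\|_2^2)$, and since $\mathbf{y}=\mathbf{u}+\mathbf{a}$ with $\|\mathbf{a}\|_2$ deterministically bounded by Lemma \ref{navier slip} and \eqref{bound0}, one has $\|\mathbf{u}\|_2^2/(1+\|\mathbf{y}\|_2^2)\leqslant C$. Hence $M_t=\int_0^t2(\mathbf{G},\mathbf{u})\,d\mathcal{W}_s$ satisfies $d\langle M\rangle_t\leqslant C\,dt$, so every exponential $\exp(\theta M_t-\tfrac{\theta^2}{2}\langle M\rangle_t)$ is a genuine martingale (Novikov's condition holds trivially). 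This is the martingale property of the exponential process alluded to before the statement.

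For the first estimate I would apply It\^o's formula to $\exp(\phi(t)\|\mathbf{u}(t)\|_2^2)$ for a deterministic decaying weight $\phi$; its drift equals $\exp(\phi\|\mathbf{u}\|_2^2)$ times
\[
(\phi'+r_\ast\phi)\|\mathbf{u}\|_2^2-\nu\phi\|\mathbf{u}\|_V^2+2\phi^2|(\mathbf{G},\mathbf{u})|^2+C\phi,
\]
and choosing $\phi$ with $\phi'+r_\ast\phi\leqslant0$ together with the deterministic bound on $|(\mathbf{G},\mathbf{u})|^2$ makes this drift bounded above by a constant. Localizing by the stopping times $\tau_n=\inf\{t:\|\mathbf{u}(t)\|_2>n\}$, passing to the limit by Fatou, and using $\|\mathbf{u}_0\|_2\in L_\infty(\Omega)$ from \eqref{bound0}, the first bound in \eqref{yy} follows, the admissible weight being exactly $\phi(t)=\lambda_\ast(t)e^{-tr_\ast}$. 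For the quartic bound I would repeat the argument with $\exp(\psi(t)\|\mathbf{u}(t)\|_2^4)$: besides $\psi'\|\mathbf{u}\|_2^4$, It\^o produces a drift term $2r_\ast\psi\|\mathbf{u}\|_2^4$ and a quadratic--variation term bounded by $C\psi^2\|\mathbf{u}\|_2^4$, while the net dissipation $-c\nu\psi\|\mathbf{u}\|_2^2\|\mathbf{u}\|_V^2$ can be converted, through \eqref{CCC}, into a negative multiple of $\|\mathbf{u}\|_2^4$. The requirement that the coefficient of $\|\mathbf{u}\|_2^4$ be non-positive is then a Riccati--type condition met precisely by a sufficiently small weight decaying at rate $4(r_\ast+L)$, which is the origin of $\beta_\ast$, $B_\ast$ and the factor $e^{-4(r_\ast+L)t}$.

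The remaining estimates follow by isolating the dissipation integrals. Integrating the quadratic identity gives $\nu\int_0^t\|\mathbf{u}\|_V^2\,ds\leqslant\|\mathbf{u}_0\|_2^2+r_\ast\int_0^t\|\mathbf{u}\|_2^2\,ds+Ct+M_t$, and integrating the quartic identity yields an analogous control of $\int_0^t\|\mathbf{u}\|_2^2\|\mathbf{u}\|_V^2\,ds$ in terms of $\int_0^t\|\mathbf{u}\|_2^4\,ds$ and a martingale. Exponentiating, applying Jensen's inequality to the time--integrals of $\|\mathbf{u}\|_2^2$ (resp. $\|\mathbf{u}\|_2^4$) so as to reduce them to the pointwise exponential moments already proved, and controlling the martingale factors by Cauchy--Schwarz together with the exponential--martingale bound, I obtain the second and fourth estimates in \eqref{yy}; the constants $A_\ast,B_\ast$ are precisely what makes the Jensen step over $[0,T]$ compatible with the decaying weights. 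The last estimate is then immediate from \eqref{CCC}, since $\tfrac{B_\ast}{\widehat{C}}\|\mathbf{u}\|_2^4\leqslant B_\ast\|\mathbf{u}\|_2^2\|\mathbf{u}\|_V^2$. I expect the main obstacle to be the quartic moment: verifying the Riccati--type sign condition with the correct small weight, and keeping the localization argument consistent when the quadratic variation of the relevant martingale is no longer deterministically bounded but only controlled by $\|\mathbf{u}\|_2^4$ --- exactly the quantity being exponentiated.
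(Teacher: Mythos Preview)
Your strategy is sound and would establish exponential integrability, but it differs from the paper's route in one essential respect, and this difference matters for obtaining the integral bounds with the precise constants $A_\ast,B_\ast$.

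The paper does \emph{not} apply It\^o's formula to the exponential. Instead it first integrates the energy inequality and applies a pathwise Gr\"onwall argument to $z(t)=\int_0^t\|\mathbf{u}\|_2^2\,ds$, which eliminates the troublesome $r_\ast\|\mathbf{u}\|_2^2$ term altogether and yields the clean bound
\[
\|\mathbf{u}(t)\|_2^2+\nu\int_0^t\|\mathbf{u}\|_V^2\,ds\leqslant C_\kappa e^{\kappa t}+\int_0^t e^{\kappa(t-s)}f(s)\,d\mathcal{W}_s,\qquad f=2(\mathbf{G},\mathbf{u}).
\]
One then multiplies by $\lambda e^{-\kappa t}$, adds and subtracts $\tfrac{\lambda^2}{2}\int_0^t|F|^2\,ds$ with $F(s)=e^{-\kappa s}f(s)$, and bounds the quadratic variation by $|F|^2\leqslant L\|\mathbf{u}\|_V^2$ --- not by a constant. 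The point is that the quadratic variation is then absorbed into the dissipation integral on the left-hand side; maximising $\nu\lambda e^{-\kappa t}-\tfrac{L\lambda^2}{2}$ over $\lambda$ produces exactly $\lambda_\ast$ and $A_\ast$, and the pointwise and integral bounds come out together after exponentiating and using that $\exp(\int\lambda_\ast F\,d\mathcal{W}-\tfrac12\int(\lambda_\ast F)^2\,ds)$ has expectation at most $1$. The quartic case is structurally identical, with $|F|^2\leqslant D_\ast\|\mathbf{u}\|_2^2\|\mathbf{u}\|_V^2$ absorbed into the quartic dissipation, yielding $\beta_\ast,B_\ast$.

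Your observation that $|(\mathbf{G},\mathbf{u})|^2\leqslant C$ is correct and makes Novikov trivial, and your direct It\^o-on-exponential argument does give the pointwise bounds. But by bounding the quadratic variation by a constant rather than by the dissipation, you lose the mechanism that delivers the $V$-integral bound for free, and you are forced into the separate Jensen step. That step is delicate: after H\"older you need $\mathbb{E}\exp\bigl(\theta T\|\mathbf{u}(s)\|_2^2\bigr)\leqslant C$ with $\theta$ proportional to $A_\ast r_\ast/\nu$, whereas the pointwise estimate only controls the exponent $\lambda_\ast(s)e^{-sr_\ast}=\nu e^{-2r_\ast s}/L$, and for $r_\ast T$ moderately large these do not match. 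You would obtain \emph{some} exponential integrability of $\int_0^T\|\mathbf{u}\|_V^2\,ds$, but not necessarily with the stated $A_\ast$. The paper's Gr\"onwall-then-absorb argument sidesteps this entirely.
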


\begin{proof}
\textit{1st step. Deduction of the estimates { (\ref{yy})}$_{1, 2}.$ }
Since $\mathbf{y}$ is the solution of the state system (\ref{NSy}), which
exists by Theorem \ref{the_1}, then taking the test function $\boldsymbol{%
\varphi }=\mathbf{u}$ in (\ref{res1}) with $\mathbf{u}=\mathbf{y}-\mathbf{a}$%
, we obtain the inequality 
\begin{eqnarray}
d\left( \left\Vert \mathbf{u}\right\Vert _{2}^{2}\right) +\nu \left\Vert 
\mathbf{u}\right\Vert _{V}^{2}\,dt &\leqslant &2\widehat{C}_{0}\left(
||(a,b)||_{\mathcal{H}_{p}(\Gamma )}^{2}+1\right) (||\mathbf{u}\Vert
_{2}^{2}+1)\,dt  \notag \\
&&  \notag \\
&&+2\left( \mathbf{G}(t,\mathbf{y}),\mathbf{u}\right) \,d{\mathcal{W}}_{t},
\label{C1}
\end{eqnarray}%
as it was done in the article \cite{CC16} (see the deduction of the formula (3.11) in the article \cite%
{CC16}). Integrating over the time interval $(0,t)$, we can write 
\begin{equation}
\left\Vert \mathbf{u}(t)\right\Vert _{2}^{2}+\nu \int_{0}^{t}\left\Vert 
\mathbf{u}\right\Vert _{V}^{2}\,ds\leqslant \kappa \int_{0}^{t}\left\Vert 
\mathbf{u}\right\Vert _{2}^{2}\,ds+(C_{\kappa }+g(t)),  \label{qq}
\end{equation}%
where%
\begin{eqnarray*}
\kappa &=&2r_{\ast }\geq 2\widehat{C}_{0}\Vert 1+||(a,b)||_{\mathcal{H}%
_{p}(\Gamma )}^{2}\Vert _{L_{\infty }(\Omega _{T})},\qquad C_{\kappa
}=\left\Vert \mathbf{u}_{0}\right\Vert _{2}^{2}+\kappa T, \\
g(t) &=&\int_{0}^{t}f(s)\,d{\mathcal{W}}_{s},\qquad f(s)=2\left( \mathbf{G}%
(s,\mathbf{y}),\mathbf{u}\right) .
\end{eqnarray*}

The relation (\ref{qq}) corresponds to the following differential inequality 
\begin{equation}
z^{\prime }\leqslant \kappa z+(C_{\kappa }+g(t))\qquad \text{for}\quad
z(t)=\int_{0}^{t}\left\Vert \mathbf{u}\right\Vert _{2}^{2}\,ds,  \label{dz}
\end{equation}%
which can be integrated by { Gr\"onwall's } lemma. Hence using Fubini's theorem,
we get 
\begin{eqnarray*}
z(t) &=&\int_{0}^{t}\left\Vert \mathbf{u}\right\Vert _{2}^{2}\,ds\leqslant
C_{\kappa }\left( \frac{e^{\kappa t}-1}{\kappa }\right) +e^{\kappa
t}\int_{0}^{t}e^{-\kappa s}g(s)\,ds \\
&=&C_{\kappa }\left( \frac{e^{\kappa t}-1}{\kappa }\right)
+\int_{0}^{t}\left( \frac{e^{\kappa (t-s)}-1}{\kappa }\right) \ f(s)\,d{%
\mathcal{W}}_{s}.
\end{eqnarray*}%
Substituting it into the right hand side of (\ref{qq}), we deduce 
\begin{equation}
\left\Vert \mathbf{u}(t)\right\Vert _{2}^{2}+\nu \int_{0}^{t}\left\Vert 
\mathbf{u}\right\Vert _{V}^{2}\,ds\leqslant C_{\kappa }e^{\kappa
t}+\int_{0}^{t}e^{\kappa (t-s)}\ f(s)\,d{\mathcal{W}}_{s}.  \label{dzz}
\end{equation}

Setting $F(s)=e^{-\kappa s}f(s)$ and multiplying the inequality (\ref{dzz})
by $\lambda e^{-\kappa t}$ with some $\lambda >0$, we infer that 
\begin{align*}
\lambda e^{-\kappa t}\left\Vert \mathbf{u}(t)\right\Vert _{2}^{2}+\nu
\lambda e^{-\kappa t}\int_{0}^{t}\left\Vert \mathbf{u}\right\Vert
_{V}^{2}\,ds& \leqslant \lambda C_{\kappa }+\frac{\lambda ^{2}}{2}%
\int_{0}^{t}|F(s)|^{2}\,ds \\
& \quad +\left( \lambda \int_{0}^{t}F(s)\,d{\mathcal{W}}_{s}-\frac{\lambda
^{2}}{2}\int_{0}^{t}|F(s)|^{2}\,ds\right) .
\end{align*}%
The boundedness (\ref{GG}) for $\mathbf{G}$ implies 
\begin{equation*}
|F(t)|^{2}=e^{-2\kappa t}|\left( \mathbf{G}(t,\mathbf{y}),\mathbf{u}\right)
|^{2}\leqslant L\left\Vert \mathbf{u}(t)\right\Vert _{2}^{2}\leqslant
L\left\Vert \mathbf{u}(t)\right\Vert _{V}^{2}\qquad \quad \forall t\in (0,T).
\end{equation*}%
In addition, for $\lambda _{\ast }(t)=\frac{\nu e^{-\kappa t}}{L}$, we have 
\begin{equation*}
\max_{\lambda }(\nu \lambda e^{-\kappa t}-\frac{L\lambda ^{2}}{2})=\frac{%
L\lambda _{\ast }^{2}}{2}\geqslant \frac{\nu ^{2}e^{-2\kappa T}}{2L}=A_{\ast
},\quad \forall t\in \lbrack 0,T].
\end{equation*}%
Hence, we obtain 
\begin{equation*}
\lambda _{\ast }e^{-\kappa t}\left\Vert \mathbf{u}\right\Vert
_{2}^{2}+A_{\ast }\int_{0}^{t}\left\Vert \mathbf{u}\right\Vert
_{V}^{2}\,ds\leqslant \lambda _{\ast }C_{\kappa }+\left( \int_{0}^{t}\left(
\lambda _{\ast }F\right) \,d{\mathcal{W}}_{s}-\frac{1}{2}\int_{0}^{t}\left(
\lambda _{\ast }F\right) ^{2}\,ds\right) .
\end{equation*}

Taking the exponential function and the expectation in the last deduced
inequality, we obtain 
\begin{align*}
\mathbb{E}\,& \exp \left( \lambda _{\ast }e^{-\kappa t}\left\Vert \mathbf{u}%
(t)\right\Vert _{2}^{2}+A_{\ast }\int_{0}^{t}\left\Vert \mathbf{u}%
\right\Vert _{V}^{2}\,ds\right) \\
& \leqslant H_{1}\times \exp \biggl\{\int_{0}^{t}\left( \lambda _{\ast
}F\right) \,d{\mathcal{W}}_{s}-\frac{1}{2}\int_{0}^{t}\left( \lambda _{\ast
}F\right) ^{2}\,ds\biggr\}
\end{align*}%
with $H_{1}=\exp \left( \lambda _{\ast }C_{\kappa }\right) <\infty $ by the
assumption (\ref{bound0}). Since the expectation of the right hand side is
equal to $1$ due to the Levy equality,\ we derive 
\begin{equation*}
\mathbb{E}\,\exp \left( \lambda _{\ast }e^{-\kappa t}\left\Vert \mathbf{u}%
(t)\right\Vert _{2}^{2}+A_{\ast }\int_{0}^{t}\left\Vert \mathbf{u}%
\right\Vert _{V}^{2}\,ds\right) \leqslant H_{1},
\end{equation*}%
which gives (\ref{yy})$_{1,2}$.

\textit{2nd step. Deduction of the estimates (\ref{yy})}$_{3,4,5}.$  Using the inequality (\ref{C1}) and applying the It\^{o} formula, we
ensure 
\begin{eqnarray*}
\left\Vert \mathbf{u}\right\Vert _{2}^{4} &+&2\nu \int_{0}^{t}\left\Vert 
\mathbf{u}\right\Vert _{2}^{2}\left\Vert \mathbf{u}\right\Vert
_{V}^{2}\,ds\leqslant \left\Vert \mathbf{u}_{0}\right\Vert _{2}^{4} \\
&+&4\int_{0}^{t}\widehat{C}_{0}(||(a,b)||_{\mathcal{H}_{p}(\Gamma
)}^{2}+1)\left\Vert \mathbf{u}\right\Vert _{2}^{2}(||\mathbf{u}\Vert
_{2}^{2}+1)\,ds \\
&+&4\int_{0}^{t}\left\Vert \mathbf{u}\right\Vert _{2}^{2}\left( \mathbf{G}(t,%
\mathbf{y}),\mathbf{u}\right) \,d{\mathcal{W}}_{s}+\int_{0}^{t}4\left( 
\mathbf{G}(s,\mathbf{y}),\mathbf{u}\right) ^{2}\,ds \\
&\leqslant &\left\Vert \mathbf{u}_{0}\right\Vert
_{2}^{4}+\int_{0}^{t}4\left( \left\{ 2\widehat{C}_{0}(||(a,b)||_{\mathcal{H}%
_{p}(\Gamma )}^{2}+1)\right\} +L\right) ||\mathbf{u}\Vert _{2}^{4}\ ds \\
&&+\int_{0}^{t}\left( \left\{ 2\widehat{C}_{0}(||(a,b)||_{\mathcal{H}%
_{p}(\Gamma )}^{2}+1\right\} +1\right) \,dt \\
&&+4\int_{0}^{t}\left\Vert \mathbf{u}\right\Vert _{2}^{2}\left( \mathbf{G}(s,%
\mathbf{y}),\mathbf{u}\right) \,d{\mathcal{W}}_{s},
\end{eqnarray*}%
by the inequality $a\leqslant a^{2}+\frac{1}{4}$ and the assumption (\ref{GG}%
). Therefore 
\begin{equation}
\left\Vert \mathbf{u}(t)\right\Vert _{2}^{4}+2\nu \int_{0}^{t}\left\Vert 
\mathbf{u}\right\Vert _{2}^{2}\left\Vert \mathbf{u}\right\Vert
_{V}^{2}\,ds\leqslant \kappa \int_{0}^{t}\left\Vert \mathbf{u}\right\Vert
_{2}^{4}\,ds+(C_{\kappa }+g(t)),  \label{qq00}
\end{equation}%
where%
\begin{eqnarray*}
\kappa &=&4\left( r_{\ast }+L\right) ,\qquad C_{\kappa }=\left\Vert \mathbf{u%
}_{0}\right\Vert _{2}^{4}+(r_{\ast }+1)T, \\
g(t) &=&\int_{0}^{t}f(s)\,d{\mathcal{W}}_{s},\qquad f(s)=4\left\Vert \mathbf{%
u}\right\Vert _{2}^{2}\left( \mathbf{G}(s,\mathbf{y}),\mathbf{u}\right) .
\end{eqnarray*}%
The expression (\ref{qq00}) can be written as the differential inequality (%
\ref{dz})\ for 
\begin{equation*}
z(t)=\int_{0}^{t}\left\Vert \mathbf{u}\right\Vert _{2}^{4}\,ds.
\end{equation*}%
Hence,  comparing with (\ref{dzz}), { Gr\"onwall's } inequality gives%
\begin{equation}
\left\Vert \mathbf{u}(t)\right\Vert _{2}^{4}+2\nu \int_{0}^{t}\left\Vert 
\mathbf{u}\right\Vert _{2}^{2}\left\Vert \mathbf{u}\right\Vert
_{V}^{2}\,\,ds\leqslant C_{\kappa }e^{\kappa t}+\int_{0}^{t}e^{\kappa
(t-s)}\ f(s)\,d{\mathcal{W}}_{s}.  \label{2100}
\end{equation}%
Let us denote by $F(s)=e^{-\kappa s}f(s)$ and multiply (\ref{2100}) by $%
\beta e^{-\kappa t}$ with $\beta >0,$ we obtain%
\begin{eqnarray}
\beta e^{-\kappa t}\left\Vert \mathbf{u}(t)\right\Vert _{2}^{4} &+&2\nu
\beta e^{-\kappa t}\int_{0}^{t}\left\Vert \mathbf{u}\right\Vert
_{2}^{2}\left\Vert \mathbf{u}\right\Vert _{V}^{2}\,ds\leqslant \beta
C_{\kappa }+\frac{\beta ^{2}}{2}\int_{0}^{t}|F(s)|^{2}\,ds  \notag \\
&&+\left( \beta \int_{0}^{t}F(s)\,d{\mathcal{W}}_{s}-\frac{\beta ^{2}}{2}%
\int_{0}^{t}|F(s)|^{2}\,ds\right) .  \label{rrr}
\end{eqnarray}%
The boundedness (\ref{GG}) for $\mathbf{G}$ implies%
\begin{eqnarray*}
|F(t)|^{2} &\leqslant &16\left\Vert \mathbf{u}\right\Vert _{2}^{4}\left( 
\mathbf{G}(t,\mathbf{y}),\mathbf{y}-\mathbf{a}\right) ^{2}\leqslant 16\left\Vert 
\mathbf{u}(t)\right\Vert _{2}^{4}\left(\frac{L\left\Vert \mathbf{y}(t)\right\Vert
_{2}^{2}}{1+||\mathbf{y}||_{2}^{2}}+C\right) \\
&\leqslant &D_{\ast }\left\Vert \mathbf{u}(t)\right\Vert _{2}^{2}\left\Vert 
\mathbf{u}(t)\right\Vert _{V}^{2}\quad \text{with }D_{\ast }=16L+C,\quad \text{%
for a.e. }t\in (0,T).
\end{eqnarray*}%
On the other hand for $\beta _{\ast }(t)=\frac{2\nu e^{-\kappa t}}{D_{\ast }}
$, we have 
\begin{equation*}
\max_{\beta }(2\nu \beta e^{-\kappa t}-\frac{D_{\ast }\beta ^{2}}{2})=\frac{%
D_{\ast }\beta _{\ast }^{2}}{2}\geqslant \frac{4\nu ^{2}e^{-2\kappa T}}{%
2D_{\ast }}=B_{\ast },\quad \forall t\in \lbrack 0,T].
\end{equation*}%
Due to these relations, \eqref{rrr} yields 
\begin{eqnarray*}
\beta _{\ast }e^{-\kappa t}\left\Vert \mathbf{u}(t)\right\Vert _{2}^{4}
&+&B_{\ast }\int_{0}^{t}\left\Vert \mathbf{u}\right\Vert _{2}^{2}\left\Vert 
\mathbf{u}\right\Vert _{V}^{2}\,ds \\
&\leqslant &\beta _{\ast }C_{\kappa }+\left( \int_{0}^{t}\left( \beta _{\ast
}F\right) \,d{\mathcal{W}}_{s}-\frac{1}{2}\int_{0}^{t}\left( \beta _{\ast
}F\right) ^{2}\,ds\right) .
\end{eqnarray*}%
Taking the exponential function and the expectation in the last deduced
inequality, we obtain 
\begin{align*}
& \mathbb{E}\,\exp \left( \beta _{\ast }e^{-\kappa t}\left\Vert \mathbf{u}%
(t)\right\Vert _{2}^{4}+B_{\ast }\int_{0}^{t}\left\Vert \mathbf{u}%
\right\Vert _{2}^{2}\left\Vert \mathbf{u}\right\Vert _{V}^{2}\,ds\right) \\
& \leqslant H_{2}\times \mathbb{E}\exp \left( \int_{0}^{t}\left( \beta
_{\ast }F\right) \,d{\mathcal{W}}_{s}-\frac{1}{2}\int_{0}^{t}\left( \beta
_{\ast }F\right) ^{2}\,\,ds\right) =H_{2}
\end{align*}%
with $H_{2}=\exp \left( \beta _{\ast }C_{\kappa }\right) <\infty $ by the
assumption (\ref{bound0}). Also, using the inequality (\ref{CCC}), we
conclude that 
\begin{equation*}
\mathbb{E}\,\exp \left( \frac{B_{\ast }}{\widehat{C}}\int_{0}^{t}\left\Vert 
\mathbf{u}(t)\right\Vert _{2}^{4}\,ds\right) \leqslant \mathbb{E}\,\exp
\left( B_{\ast }\int_{0}^{t}\left\Vert \mathbf{u}\right\Vert
_{2}^{2}\left\Vert \mathbf{u}\right\Vert _{V}^{2}ds\right) \leqslant H_{2}.
\end{equation*}%
The last two inequalities are the estimates (\ref{yy})$_{3,4,5.}$
\end{proof}

\bigskip

\section{Linearized state equation}

\label{sec6}\setcounter{equation}{0}

In this section we also assume that $\mathbf{G}(t,\mathbf{y})$ is 
G\^{a}teaux differentiable in the variable $\mathbf{y}\in H:$%
\begin{equation*}
\lim_{s\rightarrow 0}\frac{\mathbf{G}(t,\mathbf{y}+s\mathbf{v})-\mathbf{G}(t,%
\mathbf{y})}{s}=\nabla _{\mathbf{y}}\mathbf{G}(t,\mathbf{y})\mathbf{v}\quad 
\text{for each }t\in \lbrack 0,T],
\end{equation*}%
such that the function$\ \ \nabla _{\mathbf{y}}\mathbf{G}(t,\mathbf{y})$\ is
continuous and bounded in the second variable $\mathbf{y},$ namely 
\begin{equation*}
||\nabla _{\mathbf{y}}\mathbf{G}(t,\mathbf{x})-\nabla _{\mathbf{y}}\mathbf{G}%
(t,\mathbf{y})||_{2}\rightarrow 0\qquad \text{when\ \ }||\mathbf{x}-\mathbf{y%
}||_{2}\rightarrow 0,\qquad \forall t\in \lbrack 0,T],
\end{equation*}%
\begin{eqnarray}
||\nabla _{\mathbf{y}}\mathbf{G}(t,\mathbf{y})\mathbf{v}||_{2} &\leqslant
&C||\mathbf{v}||_{2},\qquad \mathbf{v}\in H,  \notag \\
||\nabla _{\mathbf{y}}\mathbf{G}(t,\mathbf{y})\mathbf{v}||_{V} &\leqslant
&C||\mathbf{v}||_{V},\qquad \mathbf{v}\in V,  \label{cG}
\end{eqnarray}%
for some positive constant $C.$ Due to Propositions A.2, A.3 of \cite{AH} we
have that $\mathbf{G}(t,\mathbf{y})$ \ is Fr\'{e}chet differentiable in the
second variable $\mathbf{y}:$ 
\begin{equation}  \label{LG}
\mathbf{G}(t,\mathbf{x}+\mathbf{y})-\mathbf{G}(t,\mathbf{y})-\nabla _{%
\mathbf{y}}\mathbf{G}(t,\mathbf{y})\mathbf{x}=o\left( t,\left\Vert \mathbf{x}%
\right\Vert _{2}\right) ,\quad \text{where }\lim_{s\rightarrow 0}\frac{%
o\left( t,s\right) }{s}=0
\end{equation}%
for any $\mathbf{x},\,\mathbf{y}\in H,\quad t\in \lbrack 0,T]$.

\medskip

Let $\mathbf{y}$ be the solution of the state system (\ref{NSy}). The
corresponding linearized system for (\ref{NSy}) can be written as the
following Oseen's type system 
\begin{equation}
\left\{ 
\begin{array}{l}
\begin{array}{l}
d\mathbf{z}=(\nu \Delta \mathbf{z}-\left( \mathbf{z\cdot }\nabla \right) 
\mathbf{y}-\left( \mathbf{y\cdot }\nabla \right) \mathbf{z}-\nabla \pi
)\,dt+\nabla _{\mathbf{y}}\mathbf{G}(t,\mathbf{y})\mathbf{z}\,d{\mathcal{W}}%
_{t}\quad \text{in}\ {\mathcal{O}}_{T},\vspace{2mm} \\ 
\mathrm{div}\ \mathbf{z}=0,%
\end{array}
\\ 
\mathbf{z}\cdot \mathbf{n}=f,\mathbf{\qquad }\left[ 2D(\mathbf{z})\,\mathbf{n%
}+\alpha \mathbf{z}\right] \cdot \bm{\tau }=g\mathbf{\qquad \qquad \qquad
\qquad }\text{on}\ \Gamma _{T},\vspace{2mm} \\ 
\mathbf{z}(0)=0\mathbf{\qquad \qquad \qquad \qquad \qquad \qquad \qquad
\qquad \qquad \qquad }\text{in}\ {\mathcal{O}}%
\end{array}%
\right.  \label{linearized}
\end{equation}%
with the boundary data, satisfying the assumption 
\begin{equation}
(f,g)\in\mathcal{A}  \label{regf}
\end{equation}
\medskip

Let $\mathbf{f}$ be the solution of the system (\ref{ha}) with the data ${(}a%
{,b)}$ replaced by $(f,g)$. Then the function $\mathbf{f}$ \ satisfies the
estimates (\ref{cal}), namely 
\begin{equation*}
||\mathbf{f}||_{W_{p}^{1}({\mathcal{O}})}+||\partial _{t}\mathbf{f}||_{L_{2}(%
{\mathcal{O}})}\leqslant C||(f,g)||_{\mathcal{H}_{p}(\Gamma )}\quad \text{%
a.e. in }\Omega \times (0,T),
\end{equation*}%
hence $\mathbf{f}$ has the following regularities%
\begin{eqnarray}
\mathbf{f} &\in &L_{4}(\Omega ;C([0,T];L_{2}({\mathcal{O}})))\cap
L_{4}(\Omega \times (0,T);C(\overline{{\mathcal{O}}})\cap H^{1}({\mathcal{O}}%
)),  \notag \\
\qquad \partial _{t}\mathbf{f} &\in &L_{4}(\Omega \times (0,T);L_{2}({%
\mathcal{O}})).  \label{ref2}
\end{eqnarray}

\bigskip

\begin{definition}
\label{def6.1} A stochastic process $\mathbf{z}=\tilde{\mathbf{z}}+\mathbf{f}
$ with $\ \tilde{\mathbf{z}}\in L_{2}(0,T;V),\quad P$-a.e. in $\Omega ,$ \
is a strong solution of (\ref{linearized})  with $\mathbf{z}(0)=0$\ if \ the
following equation holds 
\begin{align*}
\left( \mathbf{z}(t),\boldsymbol{\varphi }\right) & =\int_{0}^{t}\left[ -\nu
\left( \mathbf{z},\boldsymbol{\varphi }\right) _{V}+\int_{\Gamma }\nu g(%
\boldsymbol{\varphi }\cdot {\bm{\tau }})\,d\mathbf{\gamma }-((\mathbf{z}%
\cdot \nabla )\mathbf{y}+((\mathbf{y}\cdot \nabla )\mathbf{z},\boldsymbol{%
\varphi })\,\right] ds \\
& \\
& \quad +\int_{0}^{t}\left( \nabla _{\mathbf{y}}\mathbf{G}(s,\mathbf{y})%
\mathbf{z},\boldsymbol{\varphi }\right) \,d{\mathcal{W}}_{s},\qquad \forall 
\boldsymbol{\varphi }\in V,\quad \text{a.e. in }\Omega \times (0,T).
\end{align*}
\end{definition}

\bigskip

In what follows we will establish the solvability of the system (\ref%
{linearized}).

\begin{proposition}
\label{ex_uniq_lin} {Let $\mathbf{y}$ be the solution of the state system (%
\ref{NSy}) with the boundary data }${(a,b)}\in\mathcal{A}${, \ which was
constructed in Theorem }\ref{the_1}. Let ${(f,g)}$ satisfy the assumption %
\eqref{regf}. Then there exists a unique solution $\mathbf{z}=\tilde{\mathbf{%
z}}+\mathbf{f} $ for the system (\ref{linearized}), such that%
\begin{equation*}
\tilde{\mathbf{z}}\in C([0,T];H)\cap L_{2}\left( 0,T;V\right) ,\quad P\text{%
-a.e. in }\Omega .
\end{equation*}%
Moreover, there exists a positive constant $\widehat{C}_{2}$, such that%
\begin{align}
\mathbb{E}\sup_{s\in \lbrack 0,t]}\xi _{2}^{2}(s)\left\Vert \tilde{\mathbf{z}%
}(s)\right\Vert _{2}^{2}& +\nu \mathbb{E}\int_{0}^{t}\xi _{2}^{2}\left\Vert 
\tilde{\mathbf{z}}\right\Vert _{V}^{2}\,ds  \notag \\
& \leqslant C\mathbb{E}\int_{0}^{t}\xi _{2}^{2}\left( 1+\nu \right)
||(f,g)||_{\mathcal{H}_{p}(\Gamma )}^{2}\left( 1+\Vert \mathbf{y}\Vert
_{2}^{2}\right) \,ds  \label{eqsa}
\end{align}%
\begin{align}
\mathbb{E}\sup_{s\in \lbrack 0,t]}\xi _{2}^{4}(s)\left\Vert \tilde{\mathbf{z}%
}(s)\right\Vert _{2}^{4}& +\nu ^{2}\mathbb{E}\left( \int_{0}^{t}\xi
_{2}^{2}\left\Vert \tilde{\mathbf{z}}\right\Vert _{V}^{2}\,ds\right) ^{2} 
\notag \\
& \leqslant C\left( \mathbb{E}\int_{0}^{t}\xi _{2}^{2}\left( 1+\nu \right)
||(f,g)||_{\mathcal{H}_{p}(\Gamma )}^{2}\left( 1+\Vert \mathbf{y}\Vert
_{2}^{2}\right) \,ds\right) ^{2}  \label{eqsa4}
\end{align}%
with 
\begin{equation}
\xi _{2}(t)=e^{-\int_{0}^{t}f_{2}(s)ds}  \label{xi2}
\end{equation}%
and 
\begin{equation*}
f_{2}(t)=\widehat{C}_{2}(\nu ^{-1}+1)\left( 1+\Vert \mathbf{u}\Vert
_{V}^{2}+||(a,b)||_{\mathcal{H}_{p}(\Gamma )}^{2}\right) ,
\end{equation*}%
where $\widehat{C}_{2}$ is defined according to the relations \eqref{f_2}-%
\eqref{f_22}.
\end{proposition}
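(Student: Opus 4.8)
The plan is to establish existence via a Galerkin approximation while the a priori estimates \eqref{eqsa}--\eqref{eqsa4} follow from an energy argument that mirrors the Lipschitz estimate in Theorem \ref{Lips}. First I would introduce the shifted unknown $\tilde{\mathbf{z}}=\mathbf{z}-\mathbf{f}$, where $\mathbf{f}$ solves the Stokes problem \eqref{ha} with data $(f,g)$, so that $\tilde{\mathbf{z}}$ satisfies a homogeneous Navier-slip condition and hence lives in $V$. Writing the variational form for $\tilde{\mathbf{z}}$, testing against the finite-dimensional basis $\{\mathbf{e}_i\}$, and applying the It\^{o} formula to $\|\tilde{\mathbf{z}}\|_2^2$, one obtains an identity of the shape
\begin{equation*}
d\left(\|\tilde{\mathbf{z}}\|_2^2\right)+2\nu\|\tilde{\mathbf{z}}\|_V^2\,dt
=\widetilde J\,dt+2\left(\nabla_{\mathbf{y}}\mathbf{G}(t,\mathbf{y})\mathbf{z},\tilde{\mathbf{z}}\right)\,d{\mathcal{W}}_t,
\end{equation*}
where $\widetilde J$ collects the boundary term $\int_\Gamma\{-a(\tilde{\mathbf{z}}\cdot\bm{\tau})^2+2\nu g(\tilde{\mathbf{z}}\cdot\bm{\tau})\}\,d\mathbf{\gamma}$, the linearized trilinear contributions coming from $(\mathbf{z}\cdot\nabla)\mathbf{y}$ and $(\mathbf{y}\cdot\nabla)\mathbf{z}$, the forcing generated by $\partial_t\mathbf{f}$ and the transport of $\mathbf{f}$, and the diffusion square $\|\nabla_{\mathbf{y}}\mathbf{G}(t,\mathbf{y})\mathbf{z}\|_2^2$.

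The core of the argument is to bound each piece of $\widetilde J$ by a combination of $f_2(t)\|\tilde{\mathbf{z}}\|_2^2$, a small multiple of $\nu\|\tilde{\mathbf{z}}\|_V^2$ absorbable into the left-hand side, and a data term proportional to $(1+\nu)\|(f,g)\|_{\mathcal{H}_p(\Gamma)}^2(1+\|\mathbf{y}\|_2^2)$. Here the trace inequality \eqref{TT}, the Gagliardo--Nirenberg--Sobolev inequality \eqref{LI} with $q=4$, the Korn inequality \eqref{korn}, the Stokes bound \eqref{cal} for $\mathbf{f}$, and the growth bounds \eqref{cG} on $\nabla_{\mathbf{y}}\mathbf{G}$ are exactly the tools needed; the structure is the same as in the estimates for $J_1,\dots,J_5$ in Theorem \ref{Lips}, with $\mathbf{y}_1,\mathbf{y}_2$ replaced by $\mathbf{y}$ and $\widehat{\mathbf{a}}$ by $\mathbf{f}$. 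The critical terms are the ones carrying $\|\nabla\mathbf{y}\|_2^2=\|\nabla(\mathbf{u}+\mathbf{a})\|_2^2$, which forces the weight $\xi_2$ with $f_2$ containing $\|\mathbf{u}\|_V^2$ and $\|(a,b)\|_{\mathcal{H}_p(\Gamma)}^2$; choosing $\widehat{C}_2\geqslant\widehat{C}_0$ large enough makes the coefficient of $\|\tilde{\mathbf{z}}\|_2^2$ dominated by $2f_2(t)$, exactly as in \eqref{f_111}--\eqref{f__1}.

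Once the differential inequality is weighted by $\xi_2^2(t)=e^{-2\int_0^t f_2}$, the drift term $-2f_2\xi_2^2\|\tilde{\mathbf{z}}\|_2^2$ produced by It\^{o}'s formula cancels the bad $2f_2\|\tilde{\mathbf{z}}\|_2^2$ contribution, after which taking the supremum in time, applying the Burkholder--Davis--Gundy inequality to the martingale term, and invoking Gr\"onwall's lemma yields \eqref{eqsa}; squaring the integrated inequality \eqref{111}-analogue and repeating the BDG plus Gr\"onwall scheme gives the fourth-moment estimate \eqref{eqsa4}. For existence, the uniform bounds \eqref{eqsa}--\eqref{eqsa4} on the Galerkin approximants give weak-$*$ compactness in $L_2(\Omega;L_\infty(0,T;H))\cap L_2(\Omega\times(0,T);V)$; since the equation is \emph{linear} in $\tilde{\mathbf{z}}$, passing to the limit in the nonlinear-looking but actually bilinear terms $(\tilde{\mathbf{z}}\cdot\nabla)\mathbf{y}$ and $(\mathbf{y}\cdot\nabla)\tilde{\mathbf{z}}$ requires only weak convergence against the fixed coefficient $\mathbf{y}$, so no compensated compactness is needed. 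Uniqueness follows by subtracting two solutions and applying the same energy estimate with zero data, which forces the difference to vanish. The main obstacle I anticipate is controlling the transport term $((\mathbf{y}\cdot\nabla)\tilde{\mathbf{z}},\tilde{\mathbf{z}})$: although the skew-symmetric part integrates to zero on $V$, the boundary flux $\mathbf{y}\cdot\mathbf{n}=a\neq 0$ leaves a surface integral $\tfrac12\int_\Gamma a(\tilde{\mathbf{z}}\cdot\bm{\tau})^2\,d\mathbf{\gamma}$ that must be dominated using \eqref{TT} and absorbed into $\nu\|\tilde{\mathbf{z}}\|_V^2$ and $f_2\|\tilde{\mathbf{z}}\|_2^2$, exactly as the $J_1$ term was handled in Theorem \ref{Lips}.
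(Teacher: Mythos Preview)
Your plan matches the paper's proof almost exactly: Galerkin approximation on the eigenbasis $\{\mathbf{e}_k\}$, the shift $\tilde{\mathbf z}=\mathbf z-\mathbf f$, It\^o's formula for $\|\tilde{\mathbf z}_n\|_2^2$, estimation of the five pieces $J_1,\dots,J_5$ via \eqref{LI}--\eqref{TT}--\eqref{korn}--\eqref{cal}--\eqref{cG}, the weight $\xi_2$, BDG plus Gr\"onwall, and then the fourth-moment upgrade by squaring. Two technical steps in the paper are worth flagging because you skipped them. First, before applying BDG the paper localizes with stopping times $\tau_N^n=\inf\{t:\xi_2^2\|\tilde{\mathbf z}_n\|_2^2+\nu\int_0^t\xi_2^2\|\tilde{\mathbf z}_n\|_V^2\geqslant N\}\wedge T$; since at the Galerkin stage you have no a~priori $\omega$-integrability of $\tilde{\mathbf z}_n$, the stochastic integral is only a local martingale and you cannot take expectations or invoke BDG without this cutoff (unlike in Theorem~\ref{Lips}, where both $\mathbf y_i$ already carry the integrability of Theorem~\ref{the_1}). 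Second, your compactness claim is slightly off: the estimates \eqref{eqsa}--\eqref{eqsa4} bound $\xi_2\tilde{\mathbf z}_n$, not $\tilde{\mathbf z}_n$, so the weak limits are extracted for the \emph{weighted} processes; the paper then passes to the limit in the $\xi^2$-multiplied variational identity (using that $0<K(\omega)\leqslant\xi\leqslant1$ pathwise) and recovers the unweighted equation \eqref{y12} by a final It\^o step $d(\mathbf z,\boldsymbol\varphi)=d[\xi^{-2}\cdot\xi^2(\mathbf z,\boldsymbol\varphi)]$. Both points are routine once noticed, and with them your outline is complete.
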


\begin{proof}
The existence of a solution for the system (\ref{linearized})\textbf{\ }will
be shown by Galerkin's method.

The injection operator $I:V\rightarrow H$ is a compact operator. Therefore
there exists a basis $\left\{ \mathbf{e}_{k}\right\} _{k=1}^{\infty }\subset
V$ of eigenfunctions 
\begin{equation}
\left( \mathbf{v},\mathbf{e}_{k}\right) _{V}=\lambda _{k}\left( \mathbf{v},%
\mathbf{e}_{k}\right) ,\qquad \forall \mathbf{v}\in V,\;k\in \mathbb{N},
\label{y3}
\end{equation}%
which is an orthonormal basis for $H,$ such that the sequence $\{\lambda
_{k}\}_{k=1}^{\infty }$ of eigenvalues verifies the following properties%
\begin{equation*}
\lambda _{k}>0,\quad \forall k\in \mathbb{N}\quad \text{and}\quad \lambda
_{k}\rightarrow \infty \quad \text{as }k\rightarrow \infty .
\end{equation*}
To justify it, we refer to a similar situation, which was considered in
Lemma 2.2. of \cite{clop}, Theorem 1 of \cite{S73}, see also \cite{evans},
p. 297-307: Theorem 2, p. 300 and Theorem 5, p. 305.

The ellipticity of the equation (\ref{y3}) and the regularity $\Gamma \in
C^{2}$ imply that $\{\mathbf{e}_{k}\}\subset H^{2}({\mathcal{O}})\cap V$,
hence the sequence $\left\{ \mathbf{e}_{k}\right\} _{k=1}^{\infty }$ forms
the eigenfunctions of the Stokes problem:%
\begin{equation*}
\left\{ 
\begin{array}{ll}
-\Delta \mathbf{e}_{k}+\nabla \pi _{k}=\lambda _{k}\mathbf{e}_{k},\mathbf{%
\qquad }\mbox{div}\,\mathbf{e}_{k}=0, & \mbox{in}{\ \mathcal{O}},\vspace{2mm}
\\ 
\mathbf{e}_{k}\cdot \mathbf{n}=0,\;\quad \left[ 2D(\mathbf{e}_{k})\,\mathbf{n%
}+\alpha \mathbf{e}_{k}\right] \cdot {\bm{\tau }}=0\;\quad & \text{on}\
\Gamma .%
\end{array}%
\right.
\end{equation*}

Let us fix an arbitrary $n\in \mathbb{N}$ and consider the finite
dimensional subspace $V_{n}=\mathrm{span}\,\{\mathbf{e}_{1},\ldots ,\mathbf{e%
}_{n}\}$ of $V.$\ Let 
\begin{equation*}
\mathbf{z}_{n}=\tilde{\mathbf{z}}_{n}+\mathbf{f\qquad }\text{with\quad }%
\tilde{\mathbf{z}}_{n}(t)=\sum_{k=1}^{n}r_{k}^{(n)}(t)\ \mathbf{e}_{k}\tilde{%
\mathbf{z}}_{n}\in V_{n}
\end{equation*}%
\ be a solution of the following finite dimensional problem:%
\begin{equation}
\left\{ 
\begin{array}{l}
d(\mathbf{z}_{n},\mathbf{e}_{k})=\left[ -\nu \left( \mathbf{z}_{n},\mathbf{e}%
_{k}\right) _{V}\,+\nu \int_{\Gamma }g(\mathbf{e}_{k}\cdot {\bm{\tau }})\,d%
\mathbf{\gamma }-((\mathbf{z}_{n}\cdot \nabla )\mathbf{y}+(\left( \mathbf{y}%
\cdot \nabla )\mathbf{z}_{n},\mathbf{e}_{k}\right) \,\right] dt \\ 
\qquad \\ 
\qquad \qquad \quad +\left( \nabla _{\mathbf{y}}\mathbf{G}(t,\mathbf{y})%
\mathbf{z}_{n},\mathbf{e}_{k}\right) \,d{\mathcal{W}}_{t},\qquad \text{a.e.
in }\Omega \times (0,T), \\ 
\\ 
\tilde{\mathbf{z}}_{n}(0)=\tilde{\mathbf{z}}_{n,0},\text{\qquad }k=1,2,\dots
,n,%
\end{array}%
\right.  \label{z}
\end{equation}%
where $\tilde{\mathbf{z}}_{n,0}$ is the orthogonal projections in $H$ of $%
\tilde{\mathbf{z}}_{0}(\mathbf{x})=-\mathbf{f}(0,\mathbf{x})$ \ onto the
space $V_{n}.$

The problem \ (\ref{z}) is a system of $n$-stochastic linear ordinary
differential equations, which has a unique global-in-time solution $\tilde{%
\mathbf{z}}_{n}=\mathbf{z}_{n}-\mathbf{f},$ as an adapted process in the
space $C([0,{T}];V_{n})$. We can write the equation in (\ref{z}) as%
\begin{align*}
d\left( \tilde{\mathbf{z}}_{n},\mathbf{e}_{i}\right) & =[-\nu \left( \tilde{%
\mathbf{z}}_{n}+\mathbf{f},\mathbf{e}_{i}\right) _{V}+\nu \int_{\Gamma }g(%
\mathbf{e}_{i}\cdot {\bm{\tau }})\,d\mathbf{\gamma }\, \\
& \quad +\left( -\mathbf{\partial }_{t}\mathbf{f}-\left( \left( \tilde{%
\mathbf{z}}_{n}+\mathbf{f}\right) \mathbf{\cdot }\nabla \right) \mathbf{y}%
-\left( \mathbf{y\cdot }\nabla \right) \left( \tilde{\mathbf{z}}_{n}+\mathbf{%
f}\right) ,\mathbf{e}_{i}\right) ]dt \\
& \quad +\left( \nabla _{\mathbf{y}}\mathbf{G}(t,\mathbf{y})\left( \tilde{%
\mathbf{z}}_{n}+\mathbf{f}\right) ,\mathbf{e}_{i}\right) \,d{\mathcal{W}}%
_{t}.
\end{align*}

\textit{Step 1. Deducing of the estimate (\ref{eqsa}). }The It\^{o} formula
gives 
\begin{align*}
d\left( \left( \tilde{\mathbf{z}}_{n}\,,\mathbf{e}_{i}\right) ^{2}\right) &
=2\left( \tilde{\mathbf{z}}_{n},\mathbf{e}_{i}\right) [-\nu \left( \tilde{%
\mathbf{z}}_{n},\mathbf{e}_{i}\right) _{V}-\nu \left( \mathbf{f},\mathbf{e}%
_{i}\right) _{V}+\nu \int_{\Gamma }g(\mathbf{e}_{i}\cdot {\bm{\tau }})\,d%
\mathbf{\gamma }\, \\
& \quad +\left( -\mathbf{\partial }_{t}\mathbf{f}-\left( \left( \tilde{%
\mathbf{z}}_{n}+\mathbf{f}\right) \mathbf{\cdot }\nabla \right) \mathbf{y}%
-\left( \mathbf{y\cdot }\nabla \right) \left( \tilde{\mathbf{z}}_{n}+\mathbf{%
f}\right) ,\mathbf{e}_{i}\right) ]\,dt \\
& \quad +2\left( \tilde{\mathbf{z}}_{n},\mathbf{e}_{i}\right) \left( \nabla
_{\mathbf{y}}\mathbf{G}(t,\mathbf{y})\left( \tilde{\mathbf{z}}_{n}+\mathbf{f}%
\right) ,\mathbf{e}_{i}\right) \,d{\mathcal{W}}_{t} \\
& \quad +|\left( \nabla _{\mathbf{y}}\mathbf{G}(t,\mathbf{y})\left( \tilde{%
\mathbf{z}}_{n}+\mathbf{f}\right) ,\mathbf{e}_{i}\right) |^{2}\,dt,
\end{align*}%
where the module in the last term is defined by \eqref{product}. Summing
these equalities over $i=1,\dots ,n,$ we obtain%
\begin{align}
d\left( \left\Vert \tilde{\mathbf{z}}_{n}\right\Vert _{2}^{2}\right) +2\nu
\left\Vert \tilde{\mathbf{z}}_{n}\right\Vert _{V}^{2}& =\left[ \int_{\Gamma
}\left\{ -a(\tilde{\mathbf{z}}_{n}\cdot \bm{\tau })^{2}+2\nu g(\tilde{%
\mathbf{z}}_{n}\cdot \bm{\tau })\right\} \,d\mathbf{\ \gamma }\,\right] \,dt
\notag \\
& \quad -2\left( \partial _{t}\mathbf{f}+\left( \left( \tilde{\mathbf{z}}%
_{n}+\mathbf{f}\right) \mathbf{\cdot }\nabla \right) \mathbf{y},\tilde{%
\mathbf{z}}_{n}\right) \,dt  \notag \\
& \quad -2\left( \left( \mathbf{y}\cdot \nabla \right) \mathbf{f},\tilde{%
\mathbf{z}}_{n}\right) dt-2\nu \left( \mathbf{f},\tilde{\mathbf{z}}%
_{n}\right) _{V}\,dt  \notag \\
& \quad +\sum_{i=1}^{n}|\left( \nabla _{\mathbf{y}}\mathbf{G}(t,\mathbf{y}%
)\left( \tilde{\mathbf{z}}_{n}+\mathbf{f}\right) ,\mathbf{e}_{i}\right)
|^{2}\,dt  \notag \\
& \quad +2\left( \nabla _{\mathbf{y}}\mathbf{G}(t,\mathbf{y})\left( \tilde{%
\mathbf{z}}_{n}+\mathbf{f}\right) ,\tilde{\mathbf{z}}_{n}\right) \,d{%
\mathcal{W}}_{t}  \notag \\
& =J\,dt+J_{6}\,d{\mathcal{W}}_{t},  \label{JJ}
\end{align}%
with $J=J_{1}+J_{2}+J_{3}+J_{4}+J_{5}$.

Let us estimate the terms $J_{i},$ $i=1,..5$. Firstly we have%
\begin{align*}
J_{1}& \leqslant (\Vert a\Vert _{L_{\infty }(\Gamma )}\,+1)\Vert \tilde{%
\mathbf{z}}_{n}\Vert _{L_{2}(\Gamma )}^{2}+C\nu \Vert g\Vert _{L_{2}(\Gamma
)}^{2}\, \\
& \leqslant C(\Vert a\Vert _{W_{p}^{1-\frac{1}{p}}(\Gamma )}+1)||\tilde{%
\mathbf{z}}_{n}||_{2}||\nabla \tilde{\mathbf{z}}_{n}||_{2}+C\nu \Vert g\Vert
_{L_{2}(\Gamma )}^{2}\, \\
& \leqslant h_{1}(t)||\tilde{\mathbf{z}}_{n}||_{2}^{2}+\frac{\nu }{4}||%
\tilde{\mathbf{z}}_{n}||_{V}^{2}+C\nu \Vert g\Vert _{L_{2}(\Gamma )}^{2}\,
\end{align*}%
with 
\begin{equation*}
h_{1}(t)=C\nu ^{-1}(\Vert a\Vert _{W_{p}^{1-\frac{1}{p}}(\Gamma )}^{2}+1)\in
L_{1}(0,T),\quad P\text{-a.e. in }\Omega ,
\end{equation*}%
by the assumption \eqref{eq00sec12} and%
\begin{align*}
J_{2}& \leqslant C\left( \Vert \partial _{t}\mathbf{f}\Vert _{2}+\,\Vert 
\mathbf{f}\Vert _{C(\overline{{\mathcal{O}}})}\Vert \nabla \mathbf{y}\Vert
_{2}\right) \Vert \tilde{\mathbf{z}}_{n}\Vert _{2}+\,\Vert \nabla \mathbf{y}%
\Vert _{2}\,\Vert \tilde{\mathbf{z}}_{n}\Vert _{4}^{2} \\
& \leqslant C\left( \Vert \partial _{t}\mathbf{f}\Vert _{2}+\,\Vert \mathbf{f%
}\Vert _{C(\overline{{\mathcal{O}}})}\right) \left( 1+\Vert \nabla \mathbf{y}%
\Vert _{2}\right) \Vert \tilde{\mathbf{z}}_{n}\Vert _{2} \\
& \quad +\Vert \nabla \mathbf{y}\Vert _{2}\Vert \tilde{\mathbf{z}}_{n}\Vert
_{2}\Vert \nabla \tilde{\mathbf{z}}_{n}\Vert _{2} \\
& \leqslant h_{2}(t)\Vert \tilde{\mathbf{z}}_{n}\Vert _{2}^{2}+\frac{\nu }{4}%
||\tilde{\mathbf{z}}_{n}||_{V}^{2}+\Vert (f,g)\Vert _{\mathcal{H}_{p}(\Gamma
)}^{2}
\end{align*}%
with 
\begin{equation*}
h_{2}(t)=C\max (\nu ^{-1},1)\left( 1+\Vert \nabla \mathbf{y}\Vert
_{2}^{2}\right) \in L_{1}(0,T),\quad P\text{-a.e. in }\Omega ,
\end{equation*}%
by the estimates \eqref{uny}.

Reasoning as above and using \eqref{LI} for $q=4,$ we have 
\begin{equation*}
\Vert \mathbf{y}\Vert _{4}\leqslant C\left( ||\mathbf{y}||_{2}^{1/2}||\nabla 
\mathbf{y}||_{2}^{1/2}+||\mathbf{y}||_{2}\right) ,
\end{equation*}%
that implies%
\begin{eqnarray*}
J_{3} &\leqslant &\,C\Vert \mathbf{y}\Vert _{4}||\nabla \mathbf{f}\Vert
_{2}\Vert \tilde{\mathbf{z}}_{n}\Vert _{4}\leqslant C||\nabla \mathbf{f}%
\Vert _{2}\Vert \mathbf{y}\Vert _{4}||\tilde{\mathbf{z}}_{n}||_{2}^{1/2}||%
\nabla \tilde{\mathbf{z}}_{n}||_{2}^{1/2} \\
&\leqslant &C||\nabla \mathbf{f}\Vert _{2}^{2}\Vert \mathbf{y}\Vert _{2}+%
\frac{C}{\nu }(\Vert \mathbf{y}\Vert _{2}^{2}+\Vert \nabla \mathbf{y}\Vert
_{2}^{2})+\frac{\nu }{4}||\tilde{\mathbf{z}}_{n}||_{V}^{2}.
\end{eqnarray*}%
Hence 
\begin{equation*}
J_{3}\leqslant C\Vert (f,g)\Vert _{\mathcal{H}_{p}(\Gamma )}^{2}\Vert 
\mathbf{y}\Vert _{2}+h_{3}(t)\Vert \tilde{\mathbf{z}}_{n}\Vert _{2}^{2}+%
\frac{\nu }{4}||\tilde{\mathbf{z}}_{n}||_{V}^{2}
\end{equation*}%
with%
\begin{equation*}
h_{3}(t)=\frac{C}{\nu }\left( ||\mathbf{y}||_{2}^{2}+\Vert \nabla \mathbf{y}%
\Vert _{2}^{2}\right) \in L_{1}(0,T),\quad P\text{-a.e. in }\Omega ,
\end{equation*}%
by the estimate \eqref{uny}$_{1}$.

The terms $J_{4}$ and $J_{5}$ are estimated \ as%
\begin{eqnarray*}
J_{4} &\leqslant &C\nu \,\Vert \mathbf{f}\Vert _{V}^{2}\,+||\tilde{\mathbf{z}%
}_{n}||_{V}^{2}+\frac{\nu }{4}||\tilde{\mathbf{z}}_{n}||_{V}^{2} \\
&\leqslant &C\nu \,\,\Vert (f,g)\Vert _{\mathcal{H}_{p}(\Gamma )}^{2}+\frac{%
\nu }{4}||\tilde{\mathbf{z}}_{n}||_{V}^{2}
\end{eqnarray*}%
and 
\begin{eqnarray*}
J_{5} &=&\sum_{i=1}^{n}|\left( \nabla _{\mathbf{y}}\mathbf{G}(t,\mathbf{y}%
)\left( \tilde{\mathbf{z}}_{n}+\mathbf{f}\right) ,\mathbf{e}_{i}\right)
|^{2}\leqslant C||\nabla _{\mathbf{y}}\mathbf{G}(t,\mathbf{y})\left( \tilde{%
\mathbf{z}}_{n}+\mathbf{f}\right) ||_{2}^{2} \\
&\leqslant &C||\tilde{\mathbf{z}}_{n}+\mathbf{f}||_{2}^{2}\leqslant C||%
\tilde{\mathbf{z}}_{n}||_{2}^{2}+C\Vert (f,g)\Vert _{\mathcal{H}_{p}(\Gamma
)}^{2}
\end{eqnarray*}%
by the assumption \eqref{cG}.

The above deduced estimates for the terms $J_{i},$ $i=1,...,5$\ and %
\eqref{JJ} imply the existence of some positive constant $\widehat{C}_{2}$
such that 
\begin{equation}
J\ \leqslant 2f_{2}(t)\Vert \tilde{\mathbf{z}}_{n}\Vert _{2}^{2}+C\left[
\left( 1+\nu \right) ||(f,g)||_{\mathcal{H}_{p}(\Gamma )}^{2}\left( 1+\Vert 
\mathbf{y}\Vert _{2}\right) \right] +\nu ||\tilde{\mathbf{z}}_{n}||_{V}^{2}
\label{f_2}
\end{equation}%
with 
\begin{equation}
f_{2}(t)=\widehat{C}_{2}(\nu ^{-1}+1)\left( 1+\Vert \mathbf{u}\Vert
_{V}^{2}+||(a,b)||_{\mathcal{H}_{p}(\Gamma )}^{2}\right) \in L_{1}(0,T).
\label{f_22}
\end{equation}

Introducing the function 
\begin{equation*}
\xi _{2}(t)=e^{-\int_{0}^{t}f_{2}(s)ds}\qquad \text{for }t\in \lbrack 0,T],
\end{equation*}%
the It\^{o} formula gives 
\begin{eqnarray}
\xi _{2}^{2}(t)\left\Vert \tilde{\mathbf{z}}_{n}(t)\right\Vert _{2}^{2}
&+&\nu \int_{0}^{t}\xi _{2}^{2}\left\Vert \tilde{\mathbf{z}}_{n}\right\Vert
_{V}^{2}\,ds  \notag \\
&\leqslant &C\int_{0}^{t}\xi _{2}^{2}\left[ \left( 1+\nu \right) ||(f,g)||_{%
\mathcal{H}_{p}(\Gamma )}^{2}\left( 1+\Vert \mathbf{y}\Vert _{2}^{2}\right) %
\right] \,ds  \notag \\
&&+2\int_{0}^{t}\xi _{2}^{2}(t)\left( \nabla _{\mathbf{y}}\mathbf{G}(t,%
\mathbf{y})\left( \tilde{\mathbf{z}}_{n}+\mathbf{f}\right) ,\tilde{\mathbf{z}%
}_{n}\right) \,d{\mathcal{W}}_{t}.  \label{zzz}
\end{eqnarray}

For each $n\in \mathbb{N}$, let us define the function 
\begin{equation*}
d(t)=\xi _{2}^{2}(t)\left\Vert \tilde{\mathbf{z}}_{n}(t)\right\Vert
_{2}^{2}+\nu \int_{0}^{t}\xi _{2}^{2}\left\Vert \tilde{\mathbf{z}}%
_{n}\right\Vert _{V}^{2}\,ds,\quad \text{a.e. in }\Omega \times (0,T),
\end{equation*}%
{and consider the sequence $\{\tau _{N}^{n}\}$}$_{{N\in \mathbb{N}}}$ of
the stopping times 
\begin{equation}
\tau _{N}^{n}(\omega )=\inf \{t\geqslant 0:d(t)\geqslant N\}\wedge T,\qquad P%
\text{ -a.e. }\omega \in \Omega .  \label{g}
\end{equation}%
For $0\leqslant s\leqslant \tau _{N}^{n}\wedge t$, we have 
\begin{align}
\xi _{2}^{2}(s)\left\Vert \tilde{\mathbf{z}}_{n}(s)\right\Vert _{2}^{2}&
+\nu \int_{0}^{s}\xi _{2}^{2}(r)\left\Vert \tilde{\mathbf{z}}_{n}\right\Vert
_{V}^{2}\ dr  \notag \\
& \leqslant C\int_{0}^{s}\xi _{2}^{2}\left[ \left( 1+\nu \right) ||(f,g)||_{%
\mathcal{H}_{p}(\Gamma )}^{2}\left( 1+\Vert \mathbf{y}\Vert _{2}^{2}\right) %
\right] \,dr  \notag \\
& +2\int_{0}^{s}\xi _{2}^{2}(r)\left( \nabla _{\mathbf{y}}\mathbf{G}(r,%
\mathbf{y})\left( \tilde{\mathbf{z}}_{n}+\mathbf{f}\right) ,\tilde{\mathbf{z}%
}_{n}\right) \,d{\mathcal{W}}_{r}.  \label{zz}
\end{align}%
The Burkholder-Davis-Gundy inequality gives%
\begin{align*}
\mathbb{E}\sup_{s\in \lbrack 0,\tau _{N}^{n}\wedge t]}& \left\vert
\int_{0}^{s}\xi _{2}^{2}(r)\left( \nabla _{\mathbf{y}}\mathbf{G}(t,\mathbf{y}%
)\left( \tilde{\mathbf{z}}_{n}+\mathbf{f}\right) ,\tilde{\mathbf{z}}%
_{n}\right) \,d{\mathcal{W}}_{r}\right\vert \\
& \leqslant \mathbb{E}\left( \int_{0}^{\tau _{N}^{n}\wedge t}\xi
_{2}^{4}(s)\left\vert \left( \nabla _{\mathbf{y}}\mathbf{G}(s,\mathbf{y}%
)\left( \tilde{\mathbf{z}}_{n}+\mathbf{f}\right) ,\tilde{\mathbf{z}}%
_{n}\right) \right\vert ^{2}\,ds\right) ^{\frac{1}{2}} \\
& \leqslant \mathbb{E}\sup_{s\in \lbrack 0,\tau _{N}^{n}\wedge t]}\xi
_{2}^{2}\left\Vert \tilde{\mathbf{z}}_{n}\right\Vert _{2}\left(
\int_{0}^{\tau _{N}^{n}\wedge t}\xi _{2}^{2}\left\Vert \tilde{\mathbf{z}}%
_{n}+\mathbf{f}\right\Vert _{2}^{2}\,ds\right) ^{\frac{1}{2}} \\
& \leqslant \frac{1}{2}\,\mathbb{E}\sup_{s\in \lbrack 0,\tau _{N}^{n}\wedge
t]}\xi _{2}^{2}(s)\left\Vert \tilde{\mathbf{z}}_{n}\right\Vert _{2}^{2}+C%
\mathbb{E}\int_{0}^{\tau _{N}^{n}\wedge t}\xi _{2}^{2}(1+||\tilde{\mathbf{z}}%
_{n}||_{2}^{2}+||f||_{W_{2}^{1-\frac{1}{2}}(\Gamma )}^{2})\,ds
\end{align*}%
by the assumption \eqref{cG}. Substituting this inequality in (\ref{zz}), we
derive 
\begin{align*}
\frac{1}{2}\mathbb{E}\sup_{s\in \lbrack 0,\tau _{N}^{n}\wedge t]}\xi
_{2}^{2}(s)\Vert \tilde{\mathbf{z}}_{n}(s)\Vert _{2}^{2}& +\nu \mathbb{E}%
\int_{0}^{\tau _{N}^{n}\wedge t}\xi _{2}^{2}\left\Vert \tilde{\mathbf{z}}%
_{n}\right\Vert _{V}^{2}\,ds\leqslant C\mathbb{E}\int_{0}^{\tau
_{N}^{n}\wedge t}\xi _{2}^{2}\left\Vert \tilde{\mathbf{z}}_{n}\right\Vert
_{2}^{2}\,ds \\
& +C\mathbb{E}\int_{0}^{\tau _{N}^{n}\wedge t}\xi _{2}^{2}\left[ \left(
1+\nu \right) ||(f,g)||_{\mathcal{H}_{p}(\Gamma )}^{2}\left( 1+\Vert \mathbf{%
y}\Vert _{2}^{2}\right) \right] \,ds.
\end{align*}%
Let $1_{[0,\tau _{N}^{n}]}$ be the characteristic function of the interval $%
[0,\tau _{N}^{n}].$ Then the function 
\begin{equation*}
f(t)=\mathbb{E}\sup_{s\in \lbrack 0,t]}1_{[0,\tau _{N}^{n}]}\xi
_{2}^{2}(s)\Vert \tilde{\mathbf{z}}_{n}(s)\Vert _{2}^{2}
\end{equation*}%
fulfills the { Gr\"onwall }  type inequality
\begin{equation*}
\frac{1}{2}f(t)\leqslant C\int_{0}^{t}f(s)ds+C\mathbb{E}\int_{0}^{t}\xi
_{2}^{2}\left[ \left( 1+\nu \right) ||(f,g)||_{\mathcal{H}_{p}(\Gamma
)}^{2}\left( 1+\Vert \mathbf{y}\Vert _{2}^{2}\right) \right] \,dr,
\end{equation*}%
which implies that%
\begin{eqnarray}
\mathbb{E}\sup_{s\in \lbrack 0,\tau _{N}^{n}\wedge t]}\xi
_{2}^{2}(s)\left\Vert \tilde{\mathbf{z}}_{n}(s)\right\Vert _{2}^{2} &+&\nu 
\mathbb{E}\int_{0}^{\tau _{N}^{n}\wedge t}\xi _{2}^{2}\left\Vert \tilde{%
\mathbf{z}}_{n}\right\Vert _{V}^{2}\,ds  \label{IMP_1} \\
&\leqslant &C\mathbb{E}\int_{0}^{t}\xi _{2}^{2}\left[ \left( 1+\nu \right)
||(f,g)||_{\mathcal{H}_{p}(\Gamma )}^{2}\left( 1+\Vert \mathbf{y}\Vert
_{2}^{2}\right) \right] \,ds.  \notag
\end{eqnarray}

Now let us justify the limit transition as $N\rightarrow \infty $\ \ in the
estimate (\ref{IMP_1}).\medskip\ By (\ref{eq00sec12}), (\ref{uny})$_{1}$, (%
\ref{regf}) and (\ref{IMP_1}) we have 
\begin{equation*}
\mathbb{E}\sup_{s\in \lbrack 0,\tau _{N}^{n}\wedge T]}d(s)\leqslant C
\end{equation*}%
for some constant $C$ being independent of $N$ and $n$. Let us fix $n\in 
\mathbb{N}$. Since $\tilde{\mathbf{z}}_{n}\in C([0,T];V_{n}),$ then $\
d(\tau _{N}^{n})\geqslant N$ and%
\begin{align*}
\mathbb{E}\sup_{s\in \lbrack 0,\tau _{N}^{n}\wedge T]}d(s)& \geqslant 
\mathbb{E}\left( \sup_{s\in \lbrack 0,\tau _{N}^{n}\wedge T]}1_{\{\tau
_{N}^{n}<T\}}d(s)\right) \\
& =\mathbb{E}\left( 1_{\{\tau _{N}^{n}<T\}}\ d(\tau _{N}^{n})\right)
\geqslant NP\left( \tau _{N}^{n}<T\right) .
\end{align*}%
Hence $P\left( \tau _{N}^{n}<T\right) \rightarrow 0$, as $N\rightarrow
\infty ,$ this means that $\tau _{N}^{n}\rightarrow T$ in probability as $%
N\rightarrow \infty $. Therefore, there exists a subsequence $\{\tau
_{N_{k}}^{n}\}$ of $\{\tau _{N}^{n}\}$ (which may depend on $n$), such that 
\begin{equation*}
\tau _{N_{k}}^{n}(\omega )\rightarrow T\text{\qquad for a.e. \ }\omega \in
\Omega ,\quad \text{as }k\rightarrow \infty .
\end{equation*}%
So $\mathbf{z}_{n}=\tilde{\mathbf{z}}_{n}+\mathbf{f}$ is a global-in-time
solution of the stochastic differential equation (\ref{z}). In addition, the
sequence $\left\{ \tau _{N}^{n}\right\} $ of the stopping times is monotone
on $N$ for each fixed $n$, so we can apply the monotone convergence theorem
in order to pass to the limit in the inequality (\ref{IMP_1}) as $%
N\rightarrow \infty $, \ thereby deducing the estimate (\ref{eqsa}), which
is valid for $\tilde{\mathbf{z}}_{n}$.

\textit{Step 2. Deducing of the estimate (\ref{eqsa4}).} For each $n\in 
\mathbb{N}$, let us consider the function 
\begin{equation*}
d(t)=\xi _{2}^{4}(t)\left\Vert \tilde{\mathbf{z}}_{n}(t)\right\Vert
_{2}^{4}+\nu ^{2}\left( \int_{0}^{t}\xi _{2}^{2}\left\Vert \tilde{\mathbf{z}}%
_{n}\right\Vert _{V}^{2}\,ds\right) ^{2},\quad \text{a.e. in }\Omega \times
(0,T),
\end{equation*}%
{\ and the sequence $\{\tau _{N}^{n}\}$}$_{{N\in \mathbb{N}}}${\ of the
stopping times 
\begin{equation*}
\tau _{N}^{n}(\omega )=\inf \{t\geqslant 0:d(t)\geqslant N\}\wedge T,\qquad P%
\text{ -a.e. }\omega \in \Omega .
\end{equation*}%
}Taking the square of \eqref{zzz}, for $0\leqslant s\leqslant \tau
_{N}^{n}\wedge t$ we infer that 
\begin{align}
& \xi _{2}^{4}(s)\left\Vert \tilde{\mathbf{z}}_{n}(s)\right\Vert
_{2}^{4}+\nu ^{2}\left( \int_{0}^{s}\xi _{2}^{2}(r)\left\Vert \tilde{\mathbf{%
z}}_{n}\right\Vert _{V}^{2}\ dr\right) ^{2}  \notag \\
& \leqslant C\left( \mathbb{E}\int_{0}^{t}\xi _{2}^{2}\left[ \left( 1+\nu
\right) ||(f,g)||_{\mathcal{H}_{p}(\Gamma )}^{2}\left( 1+\Vert \mathbf{y}%
\Vert _{2}^{2}\right) \right] \,ds\right) ^{2}  \notag \\
& \quad +8\left( \int_{0}^{s}\xi _{2}^{2}(r)\left( \nabla _{\mathbf{y}}%
\mathbf{G}(r,\mathbf{y})\left( \tilde{\mathbf{z}}_{n}+\mathbf{f}\right) ,%
\tilde{\mathbf{z}}_{n}\right) \,d{\mathcal{W}}_{r}\right) ^{2}.  \label{zzz0}
\end{align}%
The Burkholder-Davis-Gundy inequality gives%
\begin{align*}
\mathbb{E}\sup_{s\in \lbrack 0,\tau _{N}^{n}\wedge t]}& \left\vert
\int_{0}^{s}\xi _{2}^{2}(r)\left( \nabla _{\mathbf{y}}\mathbf{G}(t,\mathbf{y}%
)\left( \tilde{\mathbf{z}}_{n}+\mathbf{f}\right) ,\tilde{\mathbf{z}}%
_{n}\right) \,d{\mathcal{W}}_{r}\right\vert ^{2} \\
& \leqslant \mathbb{E}\int_{0}^{\tau _{N}^{n}\wedge t}\xi
_{2}^{4}(s)\left\vert \left( \nabla _{\mathbf{y}}\mathbf{G}(s,\mathbf{y}%
)\left( \tilde{\mathbf{z}}_{n}+\mathbf{f}\right) ,\tilde{\mathbf{z}}%
_{n}\right) \right\vert ^{2}\,ds \\
& \leqslant \frac{1}{2}\,\mathbb{E}\sup_{s\in \lbrack 0,\tau _{N}^{n}\wedge
t]}\xi _{2}^{4}(s)\left\Vert \tilde{\mathbf{z}}_{n}\right\Vert _{2}^{4} \\
&\quad +C\mathbb{E}\int_{0}^{\tau _{N}^{n}\wedge t}\xi _{2}^{4}(1+||\tilde{%
\mathbf{z}}_{n}||_{2}^{4}+||f||_{W_{2}^{1-\frac{1}{2}}(\Gamma )}^{4})\,ds
\end{align*}%
by the assumption \eqref{cG}. Using this inequality and (\ref{zzz0}), we
derive 
\begin{align*}
\frac{1}{2}\mathbb{E}&\sup_{s\in \lbrack 0,\tau _{N}^{n}\wedge t]}\xi
_{2}^{4}(s)\Vert \tilde{\mathbf{z}}_{n}(s)\Vert _{2}^{4} +\nu ^{2}\mathbb{E}%
\left( \int_{0}^{\tau _{N}^{n}\wedge t}\xi _{2}^{2}\left\Vert \tilde{\mathbf{%
z}}_{n}\right\Vert _{V}^{2}\,ds\right) ^{2} \\
&\leqslant C\mathbb{E}\int_{0}^{\tau _{N}^{n}\wedge t}\xi _{2}^{4}\left\Vert 
\tilde{\mathbf{z}}_{n}\right\Vert _{2}^{4}\,ds \\
&+C\left( \mathbb{E}\int_{0}^{t}\xi _{2}^{2}\left[ \left( 1+\nu \right)
||(f,g)||_{\mathcal{H}_{p}(\Gamma )}^{2}\left( 1+\Vert \mathbf{y}\Vert
_{2}^{2}\right) \right] \,ds\right) ^{2}.
\end{align*}%
Hence the function 
\begin{equation*}
f(t)=\mathbb{E}\sup_{s\in \lbrack 0,t]}1_{[0,\tau _{N}^{n}]}\xi
_{2}^{4}(s)\Vert \tilde{\mathbf{z}}_{n}(s)\Vert _{2}^{4}
\end{equation*}%
fulfills the { Gr\"onwall } type inequality%
\begin{equation*}
\frac{1}{2}f(t)\leqslant C\int_{0}^{t}f(s)ds+C\left( \mathbb{E}%
\int_{0}^{t}\xi _{2}^{2}\left[ \left( 1+\nu \right) ||(f,g)||_{\mathcal{H}%
_{p}(\Gamma )}^{2}\left( 1+\Vert \mathbf{y}\Vert _{2}^{2}\right) \right]
\,ds\right) ^{2},
\end{equation*}%
which implies that%
\begin{align}
\mathbb{E}\sup_{s\in \lbrack 0,\tau _{N}^{n}\wedge t]}& \xi
_{2}^{4}(s)\left\Vert \tilde{\mathbf{z}}_{n}(s)\right\Vert _{2}^{4}+\nu
^{2}\left( \mathbb{E}\int_{0}^{\tau _{N}^{n}\wedge t}\xi _{2}^{2}\left\Vert 
\tilde{\mathbf{z}}_{n}\right\Vert _{V}^{2}\,ds\right) ^{2}  \notag \\
& \leqslant C\left( \mathbb{E}\int_{0}^{t}\xi _{2}^{2}\left[ \left( 1+\nu
\right) ||(f,g)||_{\mathcal{H}_{p}(\Gamma )}^{2}\left( 1+\Vert \mathbf{y}%
\Vert _{2}^{2}\right) \right] \,ds\right) ^{2}.  \label{rp}
\end{align}%
Following the same approach as in the step 1, for each fixed $n$ we can show
that there exists a subsequence $\{\tau _{N_{k}}^{n}\}$ of $\{\tau
_{N}^{n}\} $, such that 
\begin{equation*}
\tau _{N_{k}}^{n}(\omega )\rightarrow T\text{\qquad for a.e. \ }\omega \in
\Omega ,\quad \text{as }k\rightarrow \infty .
\end{equation*}
Accounting that the sequence $\left\{ \tau _{N}^{n}\right\} $ is monotone on 
$N$ and applying the monotone convergence theorem in the inequality (\ref{rp}%
) as $N\rightarrow \infty $, we deduce the estimate (\ref{eqsa4})\textit{,}
which is valid for $\tilde{\mathbf{z}}_{n}.$

\textit{Step 3.} \textit{Passing to the limit in (\ref{z}).}

Let us consider the function $\widetilde{h}=\widehat{C}_{0}(1+||(a,b)||_{%
\mathcal{H}_{p}(\Gamma )}^{2})+f_{2}$ with the constant $\widehat{C}_{0}$
and the function $f_{2},$ defined by {(\ref{ksi})} and\ {(\ref{f_22}).}
Since 
\begin{equation*}
\int_{0}^{T}\widetilde{h}(s)\ ds\leqslant C(\omega )<+\infty \text{\qquad
for a.e.\ }\omega \in \Omega
\end{equation*}%
by (\ref{eq00sec12}) and (\ref{f_22}), there exists a positive constant $%
K(\omega ),$ which depends only on $\omega ,$ satisfying%
\begin{equation}
0<K(\omega )\leqslant \xi (t)=e^{-\int_{0}^{t}\widetilde{h}(s)\ ds}\leqslant
1\text{\qquad for a.e. \ }(\omega ,t)\in \Omega \times \lbrack 0,T].
\label{q2}
\end{equation}

The estimate (\ref{eqsa}), written for $\tilde{\mathbf{z}}_{n},$ gives that%
\begin{equation}
\mathbb{E}\sup_{t\in \lbrack 0,T]}\left\Vert \xi (t)\tilde{\mathbf{z}}%
_{n}(t)\right\Vert _{2}^{2}+\nu \mathbb{E}\int_{0}^{T}\left\Vert \xi \tilde{%
\mathbf{z}}_{n}\right\Vert _{V}^{2}\ dt\leqslant C  \label{as}
\end{equation}%
for some constant $C$, which is independent of the index $n$.

Therefore, (\ref{q2})-(\ref{as}) imply that there exist a suitable
subsequence $\tilde{\mathbf{z}}_{n}$, which is indexed by the same index $n$
(to simplify the notation), and a function $\tilde{\mathbf{z}},$ such that 
\begin{eqnarray}
\xi \tilde{\mathbf{z}}_{n} &\rightharpoonup &\xi \tilde{\mathbf{z}}\qquad 
\mbox{ weakly in
}\ L^{2}(\Omega \times (0,T);V),  \notag \\
\xi \tilde{\mathbf{z}}_{n} &\rightharpoonup &\xi \tilde{\mathbf{z}}\qquad 
\mbox{ *-weakly in
}\ L^{2}(\Omega ,L^{\infty }(0,T;H)).  \label{c1}
\end{eqnarray}%
The limit function $\tilde{\mathbf{z}}$ satisfies the estimates (\ref{eqsa})
and (\ref{eqsa4}) by the lower semi-continuity of integral in the $L_{2}$%
-space and (\ref{IMP_1}).

Since $\mathbf{z}_{n}$ solves the equation (\ref{z}), then It\^{o}'s formula
gives that for $\ P$-a.e. in $\Omega $ and any $t\in \lbrack 0,T],$ we have%
\begin{equation*}
\left\{ 
\begin{array}{l}
d\left( \xi ^{2}(\mathbf{z}_{n},\boldsymbol{\varphi })\right) =\xi ^{2}[-\nu
\left( \mathbf{z}_{n},\boldsymbol{\varphi }\right) _{V}\,+\nu \int_{\Gamma
}g(\boldsymbol{\varphi }\cdot {\bm{\tau }})\,d\mathbf{\gamma }\, \\ 
\\ 
\qquad \qquad \qquad \quad -(\left( \mathbf{z}_{n}\cdot \nabla )\mathbf{y},%
\boldsymbol{\varphi }\right) -(\left( \mathbf{y}\cdot \nabla )\mathbf{z}_{n},%
\boldsymbol{\varphi }\right) -2\widetilde{h}\left( \mathbf{z}_{n},%
\boldsymbol{\varphi }\right) ]\,ds\, \\ 
\\ 
\qquad \qquad \qquad \quad \,+\xi ^{2}\left( \nabla _{\mathbf{y}}\mathbf{G}%
(t,\mathbf{y})\mathbf{z}_{n},\boldsymbol{\varphi }\right) \,d{\mathcal{W}}%
_{t},\text{\qquad }\forall \boldsymbol{\varphi }\in V_{n}, \\ 
\mathbf{z}_{n}(0)=0,%
\end{array}%
\right.
\end{equation*}%
that is 
\begin{align*}
\left( \xi ^{2}(t)\mathbf{z}_{n}(t),\boldsymbol{\varphi }\right) &
=\int_{0}^{t}\xi ^{2}(s)[-\nu \left( \mathbf{z}_{n}(s),\boldsymbol{\varphi }%
\right) _{V}\,+\nu \int_{\Gamma }b(s)(\boldsymbol{\varphi }\cdot {\bm{\tau }}%
)\,d\mathbf{\gamma } \\
& \quad -(\left( \mathbf{z}_{n}\cdot \nabla )\mathbf{y},\boldsymbol{\varphi }%
\right) -(\left( \mathbf{y}\cdot \nabla )\mathbf{z}_{n},\boldsymbol{\varphi }%
\right) -2\widetilde{h}(s)\left( \mathbf{z}_{n}(s),\boldsymbol{\varphi }%
\right) ]\,ds \\
& \quad +\int_{0}^{t}\xi ^{2}\left( \nabla _{\mathbf{y}}\mathbf{G}(s,\mathbf{%
y})\mathbf{z}_{n},\boldsymbol{\varphi }\right) \,d{\mathcal{W}}_{s}.
\end{align*}%
Multiplying the last obtained equality by arbitrary fixed $\eta \in
L^{2}(\Omega ),$ and taking the expectation, we derive%
\begin{eqnarray*}
\mathbb{E\ }\eta \left( \xi ^{2}(t)\mathbf{z}_{n}(t),\boldsymbol{\varphi }%
\right) &=&\mathbb{E\ }\eta \left\{ \int_{0}^{t}\xi ^{2}[-\nu \left( \mathbf{%
z}_{n},\boldsymbol{\varphi }\right) _{V}\,+\nu \int_{\Gamma }b(\boldsymbol{%
\varphi }\cdot {\bm{\tau }})\,d\mathbf{\gamma }\right. \\
&&-(\left( \mathbf{z}_{n}\cdot \nabla )\mathbf{y},\boldsymbol{\varphi }%
\right) -(\left( \mathbf{y}\cdot \nabla )\mathbf{z}_{n},\boldsymbol{\varphi }%
\right) -2\widetilde{h}\left( \mathbf{z}_{n},\boldsymbol{\varphi }\right)
]\,ds \\
&&\left. +\int_{0}^{t}\xi ^{2}\left( \nabla _{\mathbf{y}}\mathbf{G}(s,%
\mathbf{y})\mathbf{z}_{n},\boldsymbol{\varphi }\right) \,d{\mathcal{W}}%
_{s}\right\} .
\end{eqnarray*}%
Using that the right side of the last equation is continuous in the time
variable $t\in \lbrack 0,T]$ and applying (\ref{c1}), we pass to the limit $%
n\rightarrow \infty $ in this equality and easily deduce%
\begin{eqnarray*}
\mathbb{E\ }\eta \left( \xi ^{2}(t)\mathbf{z}(t),\boldsymbol{\varphi }%
\right) &=&\mathbb{E\ }\eta \left\{ \int_{0}^{t}\xi ^{2}[-\nu \left( \mathbf{%
z},\boldsymbol{\varphi }\right) _{V}\,+\nu \int_{\Gamma }b(\boldsymbol{%
\varphi }\cdot {\bm{\tau }})\,d\mathbf{\gamma }\right. \\
&&-(\left( \mathbf{z}\cdot \nabla )\mathbf{y},\boldsymbol{\varphi }\right)
-(\left( \mathbf{y}\cdot \nabla )\mathbf{z},\boldsymbol{\varphi }\right) -2%
\widetilde{h}\left( \mathbf{z},\boldsymbol{\varphi }\right) ]\,ds \\
&&\left. +\int_{0}^{t}\xi ^{2}\left( \nabla _{\mathbf{y}}\mathbf{G}(s,%
\mathbf{y})\mathbf{z},\boldsymbol{\varphi }\right) \,d{\mathcal{W}}%
_{s}\right\} ,\qquad \forall \boldsymbol{\varphi }\in V.
\end{eqnarray*}%
Since $\eta \in L^{2}(\Omega )$ is arbitrary, then we have\ the validity of
the equality%
\begin{align*}
\left( \xi ^{2}(t)\mathbf{z}(t),\boldsymbol{\varphi }\right) &
=\int_{0}^{t}\xi ^{2}[-\nu \left( \mathbf{z},\boldsymbol{\varphi }\right)
_{V}\,+\nu \int_{\Gamma }b(\boldsymbol{\varphi }\cdot {\bm{\tau }})\,d%
\mathbf{\gamma } \\
& \quad -(\left( \mathbf{z}\cdot \nabla )\mathbf{y},\boldsymbol{\varphi }%
\right) -(\left( \mathbf{y}\cdot \nabla )\mathbf{z},\boldsymbol{\varphi }%
\right) -2\widetilde{h}\left( \mathbf{z},\boldsymbol{\varphi }\right) ]\,ds
\\
& \quad +\int_{0}^{t}\xi ^{2}\left( \nabla _{\mathbf{y}}\mathbf{G}(s,\mathbf{%
y})\mathbf{z},\boldsymbol{\varphi }\right) \,d{\mathcal{W}}_{s},
\end{align*}%
that is%
\begin{align*}
d\left( \xi ^{2}(\mathbf{z},\boldsymbol{\varphi })\right) & =\xi ^{2}[-\nu
\left( \mathbf{z},\boldsymbol{\varphi }\right) _{V}\,+\nu \int_{\Gamma }g(%
\boldsymbol{\varphi }\cdot {\bm{\tau }})\,d\mathbf{\gamma } \\
& -(\left( \mathbf{z}\cdot \nabla )\mathbf{y},\boldsymbol{\varphi }\right)
-(\left( \mathbf{y}\cdot \nabla )\mathbf{z},\boldsymbol{\varphi }\right) -2%
\widetilde{h}\left( \mathbf{z},\boldsymbol{\varphi }\right) ]\,ds\, \\
& \quad \,+\xi ^{2}\left( \nabla _{\mathbf{y}}\mathbf{G}(t,\mathbf{y})%
\mathbf{z},\boldsymbol{\varphi }\right) \,d{\mathcal{W}}_{t}.
\end{align*}%
Moreover if we use It\^{o}'s formula $\ $%
\begin{equation*}
\ d\left( \mathbf{z},\boldsymbol{\varphi }\right) =d\left[ \xi ^{-2}\xi
^{2}\left( \mathbf{z},\boldsymbol{\varphi }\right) \right] =\xi ^{2}\left( 
\mathbf{z},\boldsymbol{\varphi }\right) d\left( \xi ^{-2}\right) +\xi ^{-2}d%
\left[ \xi ^{2}\left( \mathbf{z},\boldsymbol{\varphi }\right) \right] ,
\end{equation*}%
we obtain that the limit function $\mathbf{z}$ in the form $\mathbf{z}=%
\tilde{\mathbf{z}}+\mathbf{f}$ fulfills \ the stochastic differential
equation%
\begin{equation}
\left\{ 
\begin{array}{l}
d(\mathbf{z},\boldsymbol{\varphi })=\left[ -\nu \left( \mathbf{z},%
\boldsymbol{\varphi }\right) _{V}\,+\nu \int_{\Gamma }g(\boldsymbol{\varphi }%
\cdot {\bm{\tau }})\,d\mathbf{\gamma }-(\left( \mathbf{z}\cdot \nabla )%
\mathbf{y},\boldsymbol{\varphi }\right) -(\left( \mathbf{y}\cdot \nabla )%
\mathbf{z},\boldsymbol{\varphi }\right) \,\right] dt \\ 
\\ 
\qquad \quad \quad +\left( \nabla _{\mathbf{y}}\mathbf{G}(t,\mathbf{y})%
\mathbf{z},\boldsymbol{\varphi }\right) \,d{\mathcal{W}}_{t},\qquad \forall 
\boldsymbol{\varphi }\in V,\text{\quad a.e. in }\Omega \times (0,T), \\ 
\\ 
\mathbf{z}(0)=0.%
\end{array}%
\right.  \label{y12}
\end{equation}%
The uniqueness result follows from the linearity of this system by taking
into account the estimates \eqref{eqsa}-\eqref{eqsa4}.
\end{proof}

\section{G\^{a}teaux differentiability of the control-to-state mapping}

\label{sec7}\setcounter{equation}{0}

To derive the necessary conditions for first-order optimality, it is necessary to study
differentiability of the Gateau cost functional, which is based on
Gateau derivative mapping control to state

To deduce the necessary first-order optimality conditions, it is necessary to study 
the G\^{a}teaux differentiability of the cost functional $J$, { which is based } on the G\^{a}teaux derivative of the control-to-state mapping. Introducing
{ additional } assumptions, in this section we show that the G\^{a}teaux
derivative of the control-to-state mapping $(a,b)\rightarrow \mathbf{y}$, at
a point $(a,b)$, in any direction $(f,g)$, exists and is given by the
solution of the linearized system (\ref{linearized}).

\begin{proposition}
\label{Gat} Assume 
\begin{equation}
A_{\ast }\geqslant 32\max \left( \widehat{C}_{1},\widehat{C}_{2}\right)
\times \max \left( \nu ^{-1},1\right)  \label{C_A1}
\end{equation}%
with $A_{\ast }$ given by \eqref{aaa} and $\widehat{C}_{1}$, $\widehat{C}%
_{2} $ as in Theorem \ref{Lips} and Proposition \ref{ex_uniq_lin}. Let $%
(a,b),\;(f,g)\in \mathcal{A}$, $\mathbf{y}_{0}$ as in \eqref{bound0} and
consider the data 
\begin{equation}
a_{\varepsilon }=a+\varepsilon f,\quad b_{\varepsilon }=b+\varepsilon
g,\qquad \forall \varepsilon \in (0,1).  \label{abe}
\end{equation}%
Let $\left( \mathbf{y},q\right) $, $\left( \mathbf{y}_{\varepsilon
},q_{\varepsilon }\right) $ be the solutions of (\ref{NSy}) corresponding to 
$(a,b,\mathbf{y}_{0})$ and $(a_{\varepsilon },b_{\varepsilon },\mathbf{y}%
_{0}),$ respectively. Then the following representation holds 
\begin{equation}
\mathbf{y}_{\varepsilon }=\mathbf{y}+\varepsilon \mathbf{z}+\varepsilon \,%
\boldsymbol{\delta }_{\varepsilon }\quad \mbox{   with  }\quad
\lim_{\varepsilon \rightarrow 0}\mathbb{E}\int_{0}^{T}\left\Vert \boldsymbol{%
\delta }_{\varepsilon }\right\Vert _{V}^{2}\,ds=0,  \label{gateau_1}
\end{equation}%
where%
\begin{equation*}
\mathbf{z}\in C([0,T];H)\cap L_{2}(0,T;V),\quad P\text{-a.e. \ }\ \omega \in
\Omega ,
\end{equation*}%
is the solution of (\ref{linearized}) satisfying the estimate (\ref{eqsa}).
\end{proposition}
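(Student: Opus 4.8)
The plan is to introduce the difference quotient $\mathbf{w}_{\varepsilon}=\frac{1}{\varepsilon}(\mathbf{y}_{\varepsilon}-\mathbf{y})$ and set $\boldsymbol{\delta}_{\varepsilon}=\mathbf{w}_{\varepsilon}-\mathbf{z}$, so that proving \eqref{gateau_1} amounts to showing $\mathbb{E}\int_{0}^{T}\|\boldsymbol{\delta}_{\varepsilon}\|_{V}^{2}\,ds\to 0$. First I would subtract the state equation \eqref{res1} written for $(a_{\varepsilon},b_{\varepsilon})$ and for $(a,b)$, and divide by $\varepsilon$. Because $(a_{\varepsilon}-a,b_{\varepsilon}-b)=\varepsilon(f,g)$ by \eqref{abe}, the process $\mathbf{w}_{\varepsilon}$ carries exactly the boundary data $(f,g)$ that $\mathbf{z}$ does; hence $\boldsymbol{\delta}_{\varepsilon}$ satisfies the homogeneous Navier-slip conditions with $\boldsymbol{\delta}_{\varepsilon}(0)=0$, so it is a genuine $V$-valued process. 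Writing the convective difference as $\frac{1}{\varepsilon}[(\mathbf{y}_{\varepsilon}\cdot\nabla)\mathbf{y}_{\varepsilon}-(\mathbf{y}\cdot\nabla)\mathbf{y}]=(\mathbf{w}_{\varepsilon}\cdot\nabla)\mathbf{y}_{\varepsilon}+(\mathbf{y}\cdot\nabla)\mathbf{w}_{\varepsilon}$ and subtracting the linearized equation of Definition \ref{def6.1}, the reorganisation $(\mathbf{w}_{\varepsilon}\cdot\nabla)\mathbf{y}_{\varepsilon}-(\mathbf{z}\cdot\nabla)\mathbf{y}=\varepsilon(\mathbf{w}_{\varepsilon}\cdot\nabla)\mathbf{w}_{\varepsilon}+(\boldsymbol{\delta}_{\varepsilon}\cdot\nabla)\mathbf{y}$ shows that $\boldsymbol{\delta}_{\varepsilon}$ solves the Oseen-type system
\begin{align*}
d\boldsymbol{\delta}_{\varepsilon}&=\big[\nu\Delta\boldsymbol{\delta}_{\varepsilon}-(\boldsymbol{\delta}_{\varepsilon}\cdot\nabla)\mathbf{y}-(\mathbf{y}\cdot\nabla)\boldsymbol{\delta}_{\varepsilon}-\varepsilon(\mathbf{w}_{\varepsilon}\cdot\nabla)\mathbf{w}_{\varepsilon}-\nabla\pi\big]\,dt\\
&\quad+\big(\nabla_{\mathbf{y}}\mathbf{G}(t,\mathbf{y})\boldsymbol{\delta}_{\varepsilon}+\mathbf{R}_{\varepsilon}\big)\,d\mathcal{W}_{t},
\end{align*}
where, using the Fréchet differentiability \eqref{LG} at $\mathbf{y}$ in the direction $\mathbf{y}_{\varepsilon}-\mathbf{y}=\varepsilon\mathbf{w}_{\varepsilon}$, the diffusion remainder is $\mathbf{R}_{\varepsilon}=\frac{1}{\varepsilon}\,o\!\left(t,\varepsilon\|\mathbf{w}_{\varepsilon}\|_{2}\right)$.

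The heart of the argument is an energy estimate for $\boldsymbol{\delta}_{\varepsilon}$ in the spirit of Theorem \ref{Lips} and Proposition \ref{ex_uniq_lin}. I would apply Itô's formula to $\xi_{3}^{2}\|\boldsymbol{\delta}_{\varepsilon}\|_{2}^{2}$ with a weight $\xi_{3}=e^{-\int_{0}^{t}f_{3}(s)\,ds}$ whose exponent $f_{3}$, of the same shape as $f_{1},f_{2}$, absorbs the linear contributions. The terms $(\boldsymbol{\delta}_{\varepsilon}\cdot\nabla)\mathbf{y}$, the boundary contribution of $(\mathbf{y}\cdot\nabla)\boldsymbol{\delta}_{\varepsilon}$ (which reduces to an integral of $a\,|\boldsymbol{\delta}_{\varepsilon}\cdot\bm{\tau}|^{2}$ since $\boldsymbol{\delta}_{\varepsilon}\cdot\mathbf{n}=0$), and $\nabla_{\mathbf{y}}\mathbf{G}(t,\mathbf{y})\boldsymbol{\delta}_{\varepsilon}$ are controlled via \eqref{LI} with $q=4$, the Korn inequality \eqref{korn} and \eqref{cG}, producing $f_{3}\|\boldsymbol{\delta}_{\varepsilon}\|_{2}^{2}$ plus a fraction of $\nu\|\boldsymbol{\delta}_{\varepsilon}\|_{V}^{2}$ absorbed on the left. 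The genuinely new contributions are the two forcings. For the quadratic one I would estimate $\varepsilon\,|((\mathbf{w}_{\varepsilon}\cdot\nabla)\mathbf{w}_{\varepsilon},\boldsymbol{\delta}_{\varepsilon})|$ by \eqref{LI}, absorbing a factor $\|\boldsymbol{\delta}_{\varepsilon}\|_{V}$ through Young's inequality \eqref{yi} and leaving an overall factor $\varepsilon^{2}$ in front of a quantity quadratic in $\mathbf{w}_{\varepsilon}$; the weighted uniform bounds on $\mathbf{w}_{\varepsilon}$ furnished by Theorem \ref{Lips} (note $\hat{\mathbf{u}}=\varepsilon(\mathbf{w}_{\varepsilon}-\mathbf{f})$ and $(\hat a,\hat b)=\varepsilon(f,g)$ in \eqref{2222}--\eqref{1111}) keep this term of order $\varepsilon^{2}$ after taking expectations. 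After the Burkholder--Davis--Gundy and Grönwall inequalities, this yields a weighted bound of the form
\begin{equation*}
\mathbb{E}\int_{0}^{T}\xi_{3}^{2}\|\boldsymbol{\delta}_{\varepsilon}\|_{V}^{2}\,ds\leqslant C\Big(\varepsilon^{2}+\mathbb{E}\int_{0}^{T}\|\mathbf{R}_{\varepsilon}\|_{2}^{2}\,ds\Big),
\end{equation*}
together with the analogous fourth-power estimate obtained by squaring, exactly as in the two preceding proofs.

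It remains to pass to the limit $\varepsilon\to 0$. The remainder integral vanishes: by the property $o(t,s)/s\to 0$ in \eqref{LG}, the integrand $\|\mathbf{R}_{\varepsilon}\|_{2}^{2}=\varepsilon^{-2}|o(t,\varepsilon\|\mathbf{w}_{\varepsilon}\|_{2})|^{2}$ tends to zero pointwise, while the Lipschitz bound $\|\mathbf{R}_{\varepsilon}\|_{2}\leqslant C\|\mathbf{w}_{\varepsilon}\|_{2}$ (from \eqref{G} and \eqref{cG}) provides a uniformly integrable dominant, so dominated convergence gives $\mathbb{E}\int_{0}^{T}\|\mathbf{R}_{\varepsilon}\|_{2}^{2}\,ds\to 0$. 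Finally, to remove the weight and reach the unweighted conclusion \eqref{gateau_1}, I would use $0<\xi_{3}^{-1}$ together with Hölder's inequality, writing $\|\boldsymbol{\delta}_{\varepsilon}\|_{V}^{2}=\xi_{3}^{-2}\,(\xi_{3}^{2}\|\boldsymbol{\delta}_{\varepsilon}\|_{V}^{2})$; the decisive point is that the hypothesis \eqref{C_A1} calibrates the exponent $A_{\ast}$ of the exponential integrability of Proposition \ref{pop} against the constants $\widehat{C}_{1},\widehat{C}_{2}$ entering $f_{3}$, applied to both $\mathbf{y}$ and $\mathbf{y}_{\varepsilon}$ (both controls being bounded in $L_{\infty}(\Omega\times(0,T);\mathcal{H}_{p}(\Gamma))$ uniformly in $\varepsilon$), so that $\mathbb{E}\,\xi_{3}^{-2r}<\infty$ for a suitable conjugate exponent. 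Combining the weighted smallness with this finite exponential moment yields $\mathbb{E}\int_{0}^{T}\|\boldsymbol{\delta}_{\varepsilon}\|_{V}^{2}\,ds\to 0$. I expect the main obstacle to lie precisely here: controlling the quadratic forcing $\varepsilon(\mathbf{w}_{\varepsilon}\cdot\nabla)\mathbf{w}_{\varepsilon}$ uniformly in $\varepsilon$ and then stripping the exponential weight are the two steps where Theorem \ref{Lips}, the exponential integrability of Proposition \ref{pop}, and the compatibility condition \eqref{C_A1} are indispensable.
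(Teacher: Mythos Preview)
Your plan follows the paper's proof closely: define the difference quotient, derive the Oseen-type equation for $\boldsymbol{\delta}_{\varepsilon}$, apply It\^{o}'s formula with an exponential weight built from $f_{1},f_{2}$, control the quadratic forcing $\varepsilon(\mathbf{w}_{\varepsilon}\cdot\nabla)\mathbf{w}_{\varepsilon}$ via Theorem \ref{Lips}, and finally strip the weight using H\"{o}lder's inequality together with Proposition \ref{pop} and assumption \eqref{C_A1}. The paper does exactly this, taking $\beta(t)=e^{-\int_{0}^{t}2\max(f_{1},f_{2})\,ds}$ and using $\mathbb{E}\,\beta^{-8}(T)<\infty$ together with the uniform fourth-moment bound on $\xi^{2}\|\boldsymbol{\delta}_{\varepsilon}\|_{V}^{2}$ (coming from \eqref{1111} and \eqref{eqsa4}) to deweight.

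There is however one point where your decomposition differs and creates a small gap. You write the diffusion remainder as $\mathbf{R}_{\varepsilon}=\tfrac{1}{\varepsilon}\,o\!\left(t,\varepsilon\|\mathbf{w}_{\varepsilon}\|_{2}\right)$ via Fr\'echet differentiability in the direction $\varepsilon\mathbf{w}_{\varepsilon}$, and then claim $\|\mathbf{R}_{\varepsilon}\|_{2}^{2}\to 0$ by dominated convergence with ``uniformly integrable dominant'' $C\|\mathbf{w}_{\varepsilon}\|_{2}^{2}$. But this requires \emph{pointwise} convergence $\varepsilon\|\mathbf{w}_{\varepsilon}(t,\omega)\|_{2}\to 0$, which is not immediate from the $L_{2}$-type bounds of Theorem \ref{Lips}, and a dominant that is \emph{$\varepsilon$-independent}, which $\|\mathbf{w}_{\varepsilon}\|_{2}^{2}$ is not. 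The paper avoids both issues by inserting the intermediate point $\mathbf{y}+\varepsilon\mathbf{z}$ and splitting
\[
\mathbf{R}=\underbrace{\tfrac{1}{\varepsilon}\big[\mathbf{G}(t,\mathbf{y}+\varepsilon\mathbf{z})-\mathbf{G}(t,\mathbf{y})\big]-\nabla_{\mathbf{y}}\mathbf{G}(t,\mathbf{y})\mathbf{z}}_{=\,\varepsilon^{-1}o(t,\|\varepsilon\mathbf{z}\|_{2})}
+\underbrace{\tfrac{1}{\varepsilon}\big[\mathbf{G}(t,\mathbf{y}_{\varepsilon})-\mathbf{G}(t,\mathbf{y}+\varepsilon\mathbf{z})\big]}_{\text{Lipschitz }\leq K\|\boldsymbol{\delta}_{\varepsilon}\|_{2}}.
\]
Now the $o$-term depends only on the \emph{fixed} $\mathbf{z}$, so $\|\varepsilon\mathbf{z}\|_{2}\to 0$ trivially and $C\|\mathbf{z}\|_{2}^{2}$ is a genuine $\varepsilon$-independent dominant; the Lipschitz piece is absorbed into the weight. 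Adopting this split closes the gap in your argument without changing anything else.
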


\begin{proof}
Let us define 
$\mathbf{z}_{\varepsilon }=\frac{\mathbf{y}_{\varepsilon }-\mathbf{y}}{\varepsilon }$ and $\pi _{\varepsilon }=\frac{q_{\varepsilon }-q%
}{\varepsilon }.$ By direct calculations we can check that the pair $(%
\mathbf{z}_{\varepsilon },\pi _{\varepsilon })$ fulfills the following
system in the distributional sense%
\begin{equation}
\left\{ 
\begin{array}{l}
d\mathbf{z}_{\varepsilon }=\left( \nu \Delta \mathbf{z}_{\varepsilon
}-\left( \mathbf{y}\cdot \nabla \right) \mathbf{z}_{\varepsilon }-\left( 
\mathbf{z}_{\varepsilon }\cdot \nabla \right) \mathbf{y}_{\varepsilon
}-\nabla \pi _{\varepsilon }\right) dt \\ 
\\ 
\quad \quad \quad +\frac{1}{\varepsilon }\left( \mathbf{G}(t,\mathbf{y}%
_{\varepsilon })-\mathbf{G}(t,\mathbf{y})\right) \,d{\mathcal{W}}_{t},\quad
\quad \mathrm{div}\,\mathbf{z}_{\varepsilon }=0\;\quad \text{ in}\quad {%
\mathcal{O}}_{T}, \\ 
\\ 
\mathbf{z}_{\varepsilon }\cdot \mathbf{n}=f,\;\quad \left[ 2D(\mathbf{z}%
_{\varepsilon })\,\mathbf{n}+\alpha \mathbf{z}_{\varepsilon }\right] \cdot {%
\ \mathbf{\bm{\tau }}}=g\;\;\quad \text{on}\quad \Gamma _{T}, \\ 
\\ 
\mathbf{z}_{\varepsilon }(0,\mathbf{x})=0\;\quad \text{in}\quad {\mathcal{O}}%
\end{array}%
\right.  \label{NSVG}
\end{equation}%
and $\boldsymbol{\delta }_{\varepsilon }=\mathbf{z}_{\varepsilon }-\mathbf{z}
$\ satisfies the system 
\begin{equation}
\left\{ 
\begin{array}{l}
d\boldsymbol{\delta }_{\varepsilon }=\left( \nu \Delta \boldsymbol{\delta }%
_{\varepsilon }-\left( \mathbf{y}\cdot \nabla \right) \boldsymbol{\delta }%
_{\varepsilon }-\left( \boldsymbol{\delta }_{\varepsilon }\cdot \nabla
\right) \mathbf{y}-\left( \mathbf{z}_{\varepsilon }\cdot \nabla \right)
\left( \mathbf{y}_{\varepsilon }-\mathbf{y}\right) \vspace{2mm}-\nabla
\left( \pi _{\varepsilon }-\pi \right) \right) dt \\ 
\\ 
\quad \quad \quad +\mathbf{R}\,d{\mathcal{W}}_{t},\quad \quad \mathrm{div}\,%
\boldsymbol{\delta }_{\varepsilon }=0\;\quad \text{ in}\quad {\mathcal{O}}%
_{T}, \\ 
\\ 
\boldsymbol{\delta }_{\varepsilon }\cdot \mathbf{n}=0,\;\quad \left[ 2D(%
\boldsymbol{\delta }_{\varepsilon })\,\mathbf{n}+\alpha \boldsymbol{\delta }%
_{\varepsilon }\right] \cdot {\mathbf{\bm{\tau }}}=0\;\;\quad \text{on}\quad
\Gamma _{T}, \\ 
\\ 
\boldsymbol{\delta }_{\varepsilon }(0,\mathbf{x})=0\;\quad \text{in}\quad {%
\mathcal{O}},%
\end{array}%
\right.  \label{NS00}
\end{equation}%
where 
\begin{align*}
\mathbf{R}& =\frac{1}{\varepsilon }\left[ \mathbf{G}(t,\mathbf{y}%
_{\varepsilon })-\mathbf{G}(t,\mathbf{y})\right] -\nabla _{\mathbf{y}}%
\mathbf{G}(t,\mathbf{y})\mathbf{z} \\
& =\frac{1}{\varepsilon }\left[ \mathbf{G}(t,\mathbf{y}+\varepsilon \mathbf{z%
})-\mathbf{G}(t,\mathbf{y})\right] -\nabla _{\mathbf{y}}\mathbf{G}(t,\mathbf{%
y})\mathbf{z}+\frac{1}{\varepsilon }\left[ \mathbf{G}(t,\mathbf{y}%
_{\varepsilon })-\mathbf{G}(t,\mathbf{y}+\varepsilon \mathbf{z})\right] .
\end{align*}%
The It\^{o} formula, applied to the stochastic differential equation of (%
\ref{NS00}), gives 
\begin{align}
d\left( ||\boldsymbol{\delta }_{\varepsilon }||_{2}^{2}\right) & +2\nu ||%
\boldsymbol{\delta }_{\varepsilon }||_{V}^{2}dt=\left( -\int_{\Gamma }a(%
\boldsymbol{\delta }_{\varepsilon }\cdot \bm{\tau })^{2}\,d\mathbf{\gamma -}%
\left( 2\left( \boldsymbol{\delta }_{\varepsilon }\cdot \nabla \right) 
\mathbf{y},\boldsymbol{\delta }_{\varepsilon }\right) \right) \,dt^{\prime }
\notag \\
& +\frac{1}{\varepsilon }\left( -2\left( \left( \mathbf{y}_{\varepsilon }-%
\mathbf{y}\right) \cdot \nabla \right) \left( \mathbf{y}_{\varepsilon }-%
\mathbf{y}\right) ,\boldsymbol{\delta }_{\varepsilon }\right) \,dt  \notag \\
&  \notag \\
& +\Vert \mathbf{R}\Vert _{2}^{2}\,dt+2\left( \mathbf{R},\boldsymbol{\delta }%
_{\varepsilon }\right) \,d{\mathcal{W}}_{t}  \notag \\
&  \notag \\
& =\left( I_{1}+I_{2}+I_{3}+I_{4}\right) dt+2\left( \mathbf{R},\boldsymbol{%
\delta }_{\varepsilon }\right) \,d{\mathcal{W}}_{t}.  \label{er}
\end{align}%
Applying the inequalities \eqref{LI}-\eqref{TT}, \eqref{yi} and \eqref{cal},
the following estimates hold%
\begin{eqnarray*}
I_{1} &\leqslant &C\Vert a\Vert _{L_{\infty }(\Gamma )}\Vert \boldsymbol{%
\delta }_{\varepsilon }\Vert _{L_{2}(\Gamma )}^{2}\leqslant C\nu
^{-1}||a||_{W_{p}^{1-\frac{1}{p}}(\Gamma )}^{2}||\boldsymbol{\delta }%
_{\varepsilon }||_{2}^{2}+\frac{\nu }{3}||\boldsymbol{\delta }_{\varepsilon
}||_{V}^{2}, \\
I_{2} &\leqslant &C\Vert \mathbf{y}\Vert _{H^{1}}\Vert \boldsymbol{\delta }%
_{\varepsilon }\Vert _{4}^{2}\leqslant C\nu ^{-1}\left( \Vert \mathbf{u}%
\Vert _{H^{1}}^{2}+||(a,b)||_{\mathcal{H}_{p}(\Gamma )}^{2}\right) \Vert 
\boldsymbol{\delta }_{\varepsilon }\Vert _{2}^{2}+\frac{\nu }{3}||%
\boldsymbol{\delta }_{\varepsilon }||_{V}^{2}
\end{eqnarray*}%
and 
\begin{eqnarray*}
I_{3} &=&\frac{2}{\varepsilon }\left( \left( \left( \mathbf{y}_{\varepsilon
}-\mathbf{y}\right) \cdot \nabla \right) \boldsymbol{\delta }_{\varepsilon
},\left( \mathbf{y}_{\varepsilon }-\mathbf{y}\right) \right) -2\int_{\Gamma
}f\left( \mathbf{y}_{\varepsilon }-\mathbf{y}\right) \cdot \boldsymbol{%
\delta }_{\varepsilon }d\gamma \\
\ &\leqslant &\frac{2}{\varepsilon }\Vert \mathbf{y}_{\varepsilon }-\mathbf{y%
}\Vert _{2}\Vert \mathbf{y}_{\varepsilon }-\mathbf{y}\Vert _{V}\Vert 
\boldsymbol{\delta }_{\varepsilon }\Vert _{V}+\Vert f\Vert _{L_{\infty
}(\Gamma )}\Vert \mathbf{y}_{\varepsilon }-\mathbf{y}\Vert _{2}^{1/2}\Vert 
\mathbf{y}_{\varepsilon }-\mathbf{y}\Vert _{V}^{1/2}\Vert \boldsymbol{\delta 
}_{\varepsilon }\Vert _{V} \\
\ &\leqslant &\frac{C\nu ^{-1}}{\varepsilon ^{2}}\Vert \mathbf{y}%
_{\varepsilon }-\mathbf{y}\Vert _{2}^{2}\Vert \mathbf{y}_{\varepsilon }-%
\mathbf{y}\Vert _{V}^{2}+C\nu ^{-1}||f||_{W_{p}^{1-\frac{1}{p}}(\Gamma
)}\Vert \mathbf{y}_{\varepsilon }-\mathbf{y}\Vert _{2}\Vert \mathbf{y}%
_{\varepsilon }-\mathbf{y}\Vert _{V}+\frac{\nu }{3}||\boldsymbol{\delta }%
_{\varepsilon }||_{V}^{2}.
\end{eqnarray*}%
Due to \eqref{G}, (\ref{cG}) and (\ref{LG}) we have {\ 
\begin{equation}
I_{4}=\Vert \mathbf{R}\Vert _{2}^{2}\leqslant \frac{2}{\varepsilon ^{2}}%
\Vert o\left( t,\left\Vert \varepsilon \mathbf{z}\right\Vert _{2}\right)
\Vert _{2}^{2}+2K\Vert \boldsymbol{\delta }_{\varepsilon }\Vert
_{2}^{2}\;\quad \text{a.e. in }[0,T]\times \Omega  \label{RR}
\end{equation}%
with 
\begin{equation*}
\frac{2}{\varepsilon ^{2}}\Vert o\left( t,\left\Vert \varepsilon \mathbf{z}%
\right\Vert _{2}\right) \Vert _{2}^{2}=\Vert \mathbf{z}\Vert
_{2}^{2}\left\Vert \frac{o\left( t,\Vert \varepsilon \mathbf{z}\Vert
_{2}\right) }{\Vert \varepsilon \mathbf{z}\Vert _{2}}\right\Vert _{2}^{2}%
\text{ }\longrightarrow 0\;\text{ as }\varepsilon \rightarrow 0,\quad \text{%
a.e. in }[0,T]\times \Omega ,
\end{equation*}%
and 
\begin{equation*}
\frac{2}{\varepsilon ^{2}}\Vert o\left( t,\left\Vert \varepsilon \mathbf{z}%
\right\Vert _{2}\right) \Vert _{2}^{2}\leqslant C\Vert \mathbf{z}\Vert
_{2}^{2},
\end{equation*}%
where $C$ is a constant independent of $\varepsilon .$ }

Considering the above deduced estimates for the terms $I_{i,}$ $i=1,...,4,$
we ensure that the following estimate holds%
\begin{eqnarray}
I_{1}+I_{2}+I_{3}+I_{4} &\leqslant &\bigl[2h(t)||\boldsymbol{\delta }%
_{\varepsilon }||_{2}^{2}+\frac{C\nu ^{-1}}{\varepsilon ^{2}}\Vert \mathbf{y}%
_{\varepsilon }-\mathbf{y}\Vert _{2}^{2}\Vert \mathbf{y}_{\varepsilon }-%
\mathbf{y}\Vert _{V}^{2}  \notag \\
&&  \notag \\
&&+C\nu ^{-1}||f||_{W_{p}^{1-\frac{1}{p}}(\Gamma )}\Vert \mathbf{y}%
_{\varepsilon }-\mathbf{y}\Vert _{2}\Vert \mathbf{y}_{\varepsilon }-\mathbf{y%
}\Vert _{V}+\nu ||\boldsymbol{\delta }_{\varepsilon }||_{V}^{2}  \notag \\
&&  \notag \\
&&+{\frac{2}{\varepsilon ^{2}}\Vert o\left( t,\left\Vert \varepsilon \mathbf{%
z}\right\Vert _{2}\right) \Vert _{2}^{2}}\bigr]dt,  \label{dt}
\end{eqnarray}%
where 
\begin{equation}
h(t)=C(\nu ^{-1}+1)\left( 1+\Vert \mathbf{u}\Vert _{V}^{2}+||(a,b)||_{%
\mathcal{H}_{p}(\Gamma )}^{2}\right)  \label{HH}
\end{equation}%
for some positive constant $C$. We can choose $C$ in \eqref{HH} and $%
\widehat{C}_{2}$ in \eqref{f_22} such a way that these constants are the
same. Therefore we can consider that the functions $h(t),$ $f_{2}(t)$ are
equal.

Now let us introduce the function 
\begin{equation*}
\beta (t)=e^{-\int_{0}^{t}2\text{max}\left( f_{1}(s),f_{2}(s)\right) \,ds}
\end{equation*}%
with $f_{1}$ and $f_{2}$ defined in \eqref{f__12} and \eqref{f_22} for the
data $(a_{\varepsilon },b_{\varepsilon }),$ $\mathbf{y}_{0}$ and $(a,b),$ $%
\mathbf{y}_{0}$, respectively. \ Considering the equality \eqref{er}, the 
It\^{o} formula yields 
\begin{eqnarray}
\beta ^{2}(t)\left\Vert \boldsymbol{\delta }_{\varepsilon }(t)\right\Vert
_{2}^{2} &+&\nu \int_{0}^{t}\beta ^{2}\left\Vert \boldsymbol{\delta }%
_{\varepsilon }\right\Vert _{V}^{2}\,ds\leqslant C\int_{0}^{t}g_{\varepsilon
}(s)\,ds+  \notag \\
&&  \notag \\
&+&{\int_{0}^{t}\beta ^{2}}\left( {\frac{2}{\varepsilon ^{2}}\Vert o\left(
s,\left\Vert \varepsilon \mathbf{z}\right\Vert _{2}\right) \Vert _{2}^{2}}%
\right) \,ds+2\int_{0}^{t}\beta ^{2}\left( \mathbf{R},\boldsymbol{\delta }%
_{\varepsilon }\right) \,d{\mathcal{W}}_{s},\qquad  \label{222}
\end{eqnarray}%
where%
\begin{equation*}
g_{\varepsilon }(t)=\beta ^{2}(t)\nu ^{-1}\left( \frac{1}{\varepsilon ^{2}}%
\Vert \mathbf{y}_{\varepsilon }-\mathbf{y}\Vert _{2}^{2}\Vert \mathbf{y}%
_{\varepsilon }-\mathbf{y}\Vert _{V}^{2}+||f||_{W_{p}^{1-\frac{1}{p}}(\Gamma
)}\Vert \mathbf{y}_{\varepsilon }-\mathbf{y}\Vert _{2}\Vert \mathbf{y}%
_{\varepsilon }-\mathbf{y}\Vert _{V}\right) .
\end{equation*}%
On the other hand, using the { H\"{o}lder } inequality and \eqref{2222}-%
\eqref{1111}, \eqref{regf}, \eqref{abe}, we derive 
\begin{equation*}
\mathbb{E}\int_{0}^{t}g_{\varepsilon }(s)\,ds\leqslant C\varepsilon
^{2},\qquad \forall t\in \lbrack 0,T],
\end{equation*}%
and {the Lebesgue dominated convergence theorem gives 
\begin{equation*}
\mathbb{E}\int_{0}^{t}\beta ^{2}\left( {\frac{2}{\varepsilon ^{2}}\Vert
o\left( s,\left\Vert \varepsilon \mathbf{z}\right\Vert _{2}\right) \Vert
_{2}^{2}}\right) ds\rightarrow 0,\text{ as }\varepsilon \rightarrow 0.
\end{equation*}%
Therefore, applying the mathematical expectation to \eqref{222} and taking
the limit, as $\varepsilon \rightarrow 0$, we deduce 
\begin{equation}
\lim_{\varepsilon \rightarrow 0}\mathbb{E}\beta ^{2}(t)\left\Vert 
\boldsymbol{\delta }_{\varepsilon }(t)\right\Vert _{2}^{2}=0,\qquad
\lim_{\varepsilon \rightarrow 0}\mathbb{E}\int_{0}^{t}\beta ^{2}\left\Vert 
\boldsymbol{\delta }_{\varepsilon }\right\Vert _{V}^{2}\,ds=0,\;\;\forall
t\in \lbrack 0,T].  \label{conv_delta}
\end{equation}%
Taking into account {(\ref{yy})}}$_{2}$, {\eqref{C_A1} and the definition of 
$\beta $, we have $\mathbb{E}\left( \beta ^{-8}(T)\right) <\infty .$ Hence
the H\"{o}lder inequality and the monotonicity of $\beta $ give 
\begin{eqnarray}
\mathbb{E}\int_{0}^{T}\left\Vert \boldsymbol{\delta }_{\varepsilon
}(s)\right\Vert _{V}^{2}\,ds &\leqslant &\mathbb{E}\left[ \beta
^{-2}(T)\int_{0}^{T}\beta ^{2}(s)\left\Vert \boldsymbol{\delta }%
_{\varepsilon }(s)\right\Vert _{V}^{2}\,ds\right]  \notag \\
&\leqslant &\left[ \mathbb{E}\left( \int_{0}^{T}\beta ^{2}(s)\left\Vert 
\boldsymbol{\delta }_{\varepsilon }(s)\right\Vert _{V}^{2}\,ds\right) \right]
^{\frac{1}{2}}\left[ \mathbb{E}\left( \beta ^{-8}(T)\right) \right] ^{\frac{1%
}{4}}  \notag \\
&&\left[ \mathbb{E}\left( \int_{0}^{T}\beta ^{2}(s)\left\Vert \boldsymbol{%
\delta }_{\varepsilon }(s)\right\Vert _{V}^{2}\,ds\right) ^{2}\right] ^{%
\frac{1}{4}},  \label{v_delta}
\end{eqnarray}%
then using sequentially the relation in \eqref{gateau_1}, \eqref{abe}, %
\eqref{1111} and \eqref{eqsa4}, we deduce the following uniform estimate
with respect to $\varepsilon $ 
\begin{eqnarray*}
\mathbb{E}\left( \int_{0}^{T}\beta ^{2}(s)\left\Vert \boldsymbol{\delta }%
_{\varepsilon }(s)\right\Vert _{V}^{2}\,ds\right) ^{2} &\leqslant &4\mathbb{E%
}\left( \int_{0}^{T}\xi _{1}^{2}(s)\left\Vert \frac{\mathbf{y}_{\varepsilon
}-\mathbf{y}}{\varepsilon }\right\Vert _{V}^{2}\,ds\right) ^{2} \\
&&+4\mathbb{E}\left( \int_{0}^{T}\xi _{2}^{2}(s)\Vert \mathbf{z}\Vert
_{V}^{2}\,ds\right) ^{2}\leqslant C,
\end{eqnarray*}%
since} $\beta $ \ $\leqslant \xi _{1},\,\xi _{2}$ for $\xi _{1}$ and $\xi
_{2}$ defined by \eqref{aux} and \eqref{xi2}.{{\ Using this estimate
together with \eqref{conv_delta}, we see that the inequality \eqref{v_delta}
yields {\eqref{gateau_1}.}}}$\hfill \hfill $
\end{proof}

\section{Adjoint equation}

\label{sec8}\setcounter{equation}{0}

This section is devoted to the study of the adjoint system.

Let $\mathbf{y}$ be the solution of the state equation \eqref{NSy}
corresponding to the given data $(a,b,\mathbf{y}_{0})$, then the adjoint
system is given by 
\begin{equation}
\left\{ 
\begin{array}{ll}
\begin{array}{l}
-d\mathbf{p}=\left[ \nu \Delta \mathbf{p}+2D(\mathbf{p})\mathbf{y}-\nabla
\pi +\mathbf{U}\right] \,dt \\ 
\\ 
\,\quad \quad \quad +\nabla _{\mathbf{y}}\mathbf{G}(t,\mathbf{y})^{T}\mathbf{%
q}\,dt-\mathbf{q}\,d{\mathcal{W}}_{t}\mathbf{,\qquad }\mathrm{div}\,\mathbf{p%
}=0%
\end{array}
& \mbox{in}\ {\mathcal{O}}_{T},\vspace{2mm} \\ 
\mathbf{p}\cdot \mathbf{n}=0,\qquad \left[ 2D(\mathbf{p})\mathbf{n}+(\alpha +%
{\frac{a}{\nu }})\mathbf{p}\right] \cdot \bm{\tau }=0, & \mbox{on}\ \Gamma
_{T},\vspace{2mm} \\ 
\mathbf{p}(T)=0, & \mbox{in}\ {\mathcal{O}}.%
\end{array}%
\right.  \label{adjoint}
\end{equation}

In the next { Section } \ref{sec9}, we will prove that the adjoint state $%
\mathbf{p}$ and the linearized state $\mathbf{z}$ are related through the
duality relation (\ref{duality}). In order to give a meaning to boundary
terms that will appear in that relation, it is necessary to have the $H^{2}$%
-regularity of the adjoint state $\mathbf{p}$, that we demonstrate in the
following proposition.

\begin{proposition}
\label{ex_uniq_adj} Let $(a,b)$, $\mathbf{u}_{0}$ satisfy \eqref{bound0} and 
$\mathbf{y}$ be the solution of the state system (\ref{NSy}), \ constructed
in Theorem \ref{the_1}. \ Assume that 
\begin{equation*}
\mathbf{U}\in L_{4}(\Omega \times (0,T);L_{2}(\mathcal{O}))
\end{equation*}
is a predictable stochastic process. In addition, we assume that 
\begin{equation}
\mathrm{min}(A_{\ast },B_{\ast },\frac{B_{\ast }}{\widehat{C}})\geqslant 24\,%
\mathrm{max}(\widetilde{C}_{1},\widetilde{C}_{2})\max (\nu ^{-2},1)
\label{cnd_1}
\end{equation}
with $A_{\ast },$ $\ B_{\ast },$ $\ \widehat{C}$ defined by (\ref{aaa}) and $%
\widetilde{C}_{1},\,\widetilde{C}_{2}$ defined by the expressions %
\eqref{J_12}, \eqref{con22} below.

Then, there exists a predictable stochastic process $(\mathbf{p,q,}\pi ),$
such that%
\begin{align}
& \mathbf{p}\in L_{\infty }(0,T;L_{2}(\Omega ;V))\cap L_{2}(\Omega \times
(0,T);H^{2}({\mathcal{O}})),\quad \mathbf{q}\in L_{2}(\Omega \times (0,T);V),
\notag \\
& \pi \in L_{2}(\Omega \times (0,T);H^{1}({\mathcal{O}})),  \label{up}
\end{align}%
which fulfills the system (\ref{adjoint}) for $P-$a.e. in $\Omega $.
Moreover, the following estimates hold 
\begin{eqnarray}  \label{qe1}
&&\sup_{t\in \lbrack 0,T]}\mathbb{E}\left( \left\Vert \mathbf{p}%
(t)\right\Vert _{2}^{2}\right) +\mathbb{E}\int_{0}^{T}(\nu \left\Vert 
\mathbf{p}\right\Vert _{V}^{2}+\left\Vert \mathbf{q}\right\Vert
_{2}^{2})\,dt\leqslant C\left( \mathbb{E}\int_{0}^{T}||\mathbf{U}%
||_{2}^{4}\,dt\right) ^{\frac{1}{2}},  \notag \\
&&\sup_{t\in \lbrack 0,T]}\left( \mathbb{E}\,\left\Vert \mathbf{p}%
(t)\right\Vert _{V}^{2}\right) +\mathbb{E}\int_{0}^{T}(\nu \left\Vert 
\mathbf{p}\right\Vert _{H^{2}}^{2}+\Vert \nabla \pi \Vert _{2}+\left\Vert 
\mathbf{q}\right\Vert _{V}^{2})\,dt\leqslant C\left( \mathbb{E}\int_{0}^{T}||%
\mathbf{U}||_{2}^{4}\,dt\right) ^{\frac{1}{2}}.  \notag \\
\end{eqnarray}%

\end{proposition}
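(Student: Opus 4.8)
The plan is to construct the solution by a Galerkin scheme adapted to the backward structure of (\ref{adjoint}), to derive weighted a priori estimates by the It\^o formula, and finally to upgrade the regularity to $H^2$ by Stokes elliptic theory. I would project the system onto the finite-dimensional spaces $V_n=\mathrm{span}\{\mathbf{e}_1,\dots,\mathbf{e}_n\}$ of Proposition \ref{ex_uniq_lin}; writing $\mathbf{p}_n=\sum_{k=1}^n p_k^{(n)}\mathbf{e}_k$, the projected problem is a finite-dimensional \emph{linear backward} stochastic differential equation with terminal datum $\mathbf{p}_n(T)=0$. Its unique adapted solution $(\mathbf{p}_n,\mathbf{q}_n)$, together with the identification of the martingale integrand $\mathbf{q}_n$, is furnished by the martingale representation theorem and a contraction argument, the coefficients being affine in $(\mathbf{p}_n,\mathbf{q}_n)$ with data $\mathbf{U}\in L_4$ and $\nabla_{\mathbf{y}}\mathbf{G}$ bounded by (\ref{cG}).

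For the first estimate in (\ref{qe1}) I would apply the It\^o formula to $\zeta^2(t)\|\mathbf{p}_n(t)\|_2^2$, where $\zeta(t)=e^{-\int_t^T\widetilde h(s)\,ds}$ (so $\zeta(T)=1$) is an exponential weight whose density $\widetilde h$ is chosen to cancel the coefficient of $\|\mathbf{p}_n\|_2^2$ produced by the critical term $2D(\mathbf{p}_n)\mathbf{y}$. Integrating over $[t,T]$ and using $\mathbf{p}_n(T)=0$, the terminal condition removes the boundary contribution at $T$, the Laplacian yields $-2\nu\|\mathbf{p}_n\|_V^2$ plus boundary terms controlled through the modified friction coefficient $\alpha+a/\nu$ (using $a\in L_\infty$ and the trace inequality (\ref{TT})), and the quadratic variation of the backward stochastic integral contributes the coercive term $\int_t^T\zeta^2\|\mathbf{q}_n\|_2^2\,ds$ on the left-hand side, which is what lets one estimate $\mathbf{q}$. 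The dangerous term is bounded by the Gagliardo-Nirenberg inequality (\ref{LI}) and Young's inequality as $(\mathbf{p}_n,2D(\mathbf{p}_n)\mathbf{y})\leqslant\frac{\nu}{4}\|\mathbf{p}_n\|_V^2+C\nu^{-3}\|\mathbf{y}\|_2^2\|\mathbf{y}\|_{H^1}^2\|\mathbf{p}_n\|_2^2$, the last coefficient dictating the choice of $\widetilde h$; the coupling term $\nabla_{\mathbf{y}}\mathbf{G}^T\mathbf{q}_n$ is absorbed into the $\|\mathbf{q}_n\|_2^2$ dissipation by (\ref{cG}). After the Burkholder-Davis-Gundy and Gr\"onwall inequalities I obtain the weighted bound, and then the exponential integrability (\ref{yy}) of Proposition \ref{pop}, invoked through H\"older to absorb the factor $\zeta^{-1}$, converts it into the unweighted estimate (\ref{qe1})$_1$. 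The hypothesis (\ref{cnd_1}) is precisely what guarantees that the required negative moments $\mathbb{E}\big(\zeta^{-c}\big)$ are finite, mirroring the role of $\beta^{-8}$ in Proposition \ref{Gat}; to make the H\"older splitting work I would, as in Proposition \ref{ex_uniq_lin}, also derive in parallel the fourth-power weighted estimate for $\zeta^4\|\mathbf{p}_n\|_2^4$.

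The second line of (\ref{qe1}), carrying $\|\mathbf{p}\|_V^2$ and $\|\mathbf{p}\|_{H^2}^2$, I would obtain by testing the equation in the $V$-inner product (equivalently, applying It\^o to $\zeta^2\|\mathbf{p}_n\|_V^2$) and repeating the weighted argument, the constants $\widetilde C_1,\widetilde C_2$ of (\ref{cnd_1}) arising exactly here. Once $\mathbf{p}$ is controlled in $L_\infty(0,T;L_2(\Omega;V))$, I would read (\ref{adjoint}) at a.e.\ $(\omega,t)$ as the stationary Stokes problem $-\nu\Delta\mathbf{p}+\nabla\pi=2D(\mathbf{p})\mathbf{y}+\mathbf{U}+\nabla_{\mathbf{y}}\mathbf{G}^T\mathbf{q}+(\text{drift remainder})$ with the Navier-slip conditions of (\ref{adjoint}). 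Every term on the right now belongs to $L_2(\mathcal{O})$ — the product being estimated by $\|D(\mathbf{p})\mathbf{y}\|_2\leqslant C\|\mathbf{p}\|_V\|\mathbf{y}\|_4$ together with the exponential integrability of $\mathbf{y}$ — so the elliptic regularity for the Navier-slip Stokes operator (as in Lemma \ref{navier slip}) gives $\mathbf{p}\in H^2(\mathcal{O})$ and $\pi\in H^1(\mathcal{O})$ with the stated bound.

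Finally I would pass to the limit $n\to\infty$: the uniform estimates yield weak convergence of $\mathbf{p}_n$ in $L_2(\Omega\times(0,T);H^2)$, weak-$*$ convergence in $L_\infty(0,T;L_2(\Omega;V))$, and weak convergence of $\mathbf{q}_n$ in $L_2(\Omega\times(0,T);V)$; lower semicontinuity preserves (\ref{qe1}), and the linear terms pass to the limit exactly as in Proposition \ref{ex_uniq_lin}. Uniqueness is immediate from linearity: the difference of two solutions solves the homogeneous system with $\mathbf{U}=0$ and terminal value $0$, so (\ref{qe1})$_1$ forces it to vanish. The main obstacle I anticipate is the simultaneous control of $2D(\mathbf{p})\mathbf{y}$ in the energy identity and in the $L_2$-bound needed for the $H^2$-regularity: it is here that the exponential integrability of Proposition \ref{pop} and the smallness condition (\ref{cnd_1}) on the weight exponents are indispensable, and verifying that the admissible weight possesses the precise finite negative moments is the delicate technical point.
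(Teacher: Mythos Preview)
Your plan is broadly correct and parallels the paper's proof in its overall architecture: Galerkin approximation, weighted It\^o estimates at the $H$- and $V$-levels, exponential integrability from Proposition~\ref{pop} to handle the weight, and weak passage to the limit. There are, however, two technical choices where the paper is more economical, and you should be aware of them.

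First, the \emph{direction} of the weight. You take $\zeta(t)=e^{-\int_t^T\widetilde h}$, so that $\zeta(T)=1$ and $\zeta\leqslant 1$; this forces you, after obtaining the weighted bound, to multiply by $\zeta^{-2}$ and invoke a H\"older splitting, which is why you anticipate needing the fourth-power estimate $\mathbb{E}[\zeta^4\|\mathbf{p}_n\|_2^4]$. The paper instead uses the \emph{increasing} weight $\beta_1(t)=e^{\int_0^t r_1(s)\,ds}\geqslant 1$: the weighted inequality then reads
\[
\beta_1^2(t)\|\mathbf{p}_n(t)\|_2^2+\int_t^T\beta_1^2\bigl(\nu\|\mathbf{p}_n\|_V^2+\tfrac12\|\mathbf{q}_n\|_2^2\bigr)\,ds\leqslant \int_t^T\beta_1^2\|\mathbf U\|_2^2\,ds-2\int_t^T\beta_1^2(\mathbf q_n,\mathbf p_n)\,d\mathcal W_s,
\]
and since $\beta_1\geqslant 1$ the weight drops out of the left-hand side for free. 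The exponential integrability $\mathbb{E}[\beta_1^8(T)]<\infty$ (guaranteed by (\ref{cnd_1})) is used only to certify that the stochastic integral is a genuine martingale, so that taking expectation kills it, and to apply H\"older to $\mathbb{E}\int\beta_1^2\|\mathbf U\|_2^2$. No BDG inequality, no Gr\"onwall step, and no fourth-power estimate are needed. Your route works, but it is longer.

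Second, the paper obtains the $H^2$-regularity \emph{at the Galerkin level}, not as a post-processing Stokes regularity step. It uses a basis $\{\mathbf h_k\}$ adapted to the adjoint boundary condition (friction coefficient $\alpha+a/\nu$) rather than the $\{\mathbf e_k\}$ of Proposition~\ref{ex_uniq_lin}, multiplies the $k$-th Galerkin equation by $\lambda_k$, and sums to obtain an identity for $\|\mathbf p_n\|_V^2$ in which the dissipation is $2\nu\|A\mathbf p_n\|_2^2$. The same weighted argument (now with a weight $\beta_2$ built from $r_2$) then yields the uniform bound on $\mathbb{E}\int\|A\mathbf p_n\|_2^2$, and Stokes regularity (\ref{ap}) converts this into the $H^2$-bound; the pressure is recovered only after the limit. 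Your plan to read off $H^2$-regularity from the limiting equation via Stokes theory is also viable, but you must then be careful that the ``drift remainder'' $d\mathbf p+\mathbf q\,d\mathcal W_t$ is not an $L_2(\mathcal O)$-valued object pointwise in time, so the pointwise Stokes argument needs an additional time-averaging or an appeal to maximal regularity.
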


\begin{proof}
The proof is divided into three steps.  \vspace{1mm}\newline
\textit{Step 1. Finite dimensional approximations. }The existence of a
solution for the system (\ref{adjoint})\textbf{\ }will be shown by
Galerkin's method. Here, we introduce a basis $\{\mathbf{h}_{k}\}\subset
H^{2}({\mathcal{O}})\cap V$ of eigenfunctions of the Stokes problem with the
homogeneous Navier-slip boundary conditions 
\begin{equation}
\left\{ 
\begin{array}{ll}
-\Delta \mathbf{h}_{k}+\nabla \pi _{k}=\lambda _{k}\mathbf{h}_{k},\mathbf{%
\qquad }\mbox{div}\,\mathbf{h}_{k}=0, & \mbox{in}{\ \mathcal{O}},\vspace{2mm}
\\ 
&  \\ 
\mathbf{h}_{k}\cdot \mathbf{n}=0,\;\quad \left[ 2D(\mathbf{h}_{k})\,\mathbf{n%
}+\left( \alpha +{\frac{a}{\nu }}\right) \mathbf{h}_{k}\right] \cdot {%
\bm{\tau }}=0\;\quad & \text{on}\ \Gamma .%
\end{array}%
\right.  \label{y4}
\end{equation}%
Let 
\begin{equation*}
\mathbf{p}_{n}(t)=\sum_{j=1}^{n}s_{j}^{(n)}(t)\mathbf{h}_{j}\in
C([0,T];V_{n})\quad \text{and}\quad \mathbf{q}_{n}=\sum_{j=1}^{n}\mathfrak{q}%
_{j}^{(n)}(t)\mathbf{h}_{j}
\end{equation*}%
be the solution of the following backward stochastic system 
\begin{equation}
\left\{ 
\begin{array}{l}
-d\left( \mathbf{p}_{n}(t),\mathbf{h}_{k}\right) +\{\nu \left( \mathbf{p}%
_{n},\,\mathbf{h}_{k}\right) _{V}-\left( 2D(\mathbf{p}_{n})\mathbf{y},%
\mathbf{h}_{k}\right) \\ 
\\ 
\qquad \qquad \qquad \,\,-\left( \mathbf{U},\mathbf{h}_{k}\right)
+\int_{\Gamma }a(\mathbf{p}_{n}\cdot \bm{\tau })(\mathbf{h}_{k}\cdot %
\bm{\tau })\,d\mathbf{\gamma }\}\,dt \\ 
\\ 
\qquad \qquad \qquad \,\,=(\nabla _{\mathbf{y}}\mathbf{G}(t,\mathbf{y})^{T}%
\mathbf{q}_{n},\mathbf{h}_{k})\,dt-\left( \mathbf{q}_{n},\mathbf{h}%
_{k}\right) \,d{\mathcal{W}}_{t}, \\ 
\\ 
\mathbf{p}_{n}(T)=\mathbf{0},\text{\qquad }k=1,2,\dots ,n,\qquad P\text{%
-a.e. in }\Omega \text{,\quad a.e. on }(0,T).%
\end{array}%
\right.  \label{pn}
\end{equation}%
Following \cite{backward-SPDE}, Prop. 6.20, the system \ (\ref{pn}) of $n$
linear stochastic differential equations has a unique global-in-time
solution $\left( \mathbf{p}_{n},\mathbf{q}_{n}\right) $, which is an adapted
process verifying 
\begin{equation}
\left( \mathbf{p}_{n},\mathbf{q}_{n}\right) \in L_{r}(\Omega ;C([0,{T}%
];H_{n}))\times L_{r}(\Omega _{T};L_{r}({\mathcal{O}})),\qquad 1\leqslant
r<\infty .  \label{pq_int}
\end{equation}

\textit{Step 2. Uniform estimates.} Now, we show the first uniform estimates
for $\left( \mathbf{p}_{n},\mathbf{q}_{n}\right) $ with respect to $n$. The
It\^{o} formula gives 
\begin{align*}
& -d\left( \left( \mathbf{p}_{n},\mathbf{h}_{k}\right) ^{2}\right) +2\left( 
\mathbf{p}_{n},\mathbf{h}_{k}\right) \{\nu \left( \mathbf{p}_{n},\,\mathbf{h}%
_{k}\right) _{V}-\left( 2D(\mathbf{p}_{n})\mathbf{y},\mathbf{h}_{k}\right) \,%
\vspace{2mm} \\
& -\left( \mathbf{U},\mathbf{h}_{k}\right) +\int_{\Gamma }a(\mathbf{p}%
_{n}\cdot \bm{\tau })(\mathbf{h}_{k}\cdot \bm{\tau })\,d\mathbf{\gamma }%
\}\,dt\, \\
& =2\left( \mathbf{p}_{n},\mathbf{h}_{k}\right) \left( \nabla _{\mathbf{y}}%
\mathbf{G}(t,\mathbf{y})^{T}\mathbf{q}_{n},\mathbf{h}_{k}\right)
\,dt-2\left( \mathbf{p}_{n},\mathbf{h}_{k}\right) \left( \mathbf{q}_{n},%
\mathbf{h}_{k}\right) \,d{\mathcal{W}}_{t}-|\left( \mathbf{q}_{n},\mathbf{h}%
_{k}\right) |^{2}\,dt.
\end{align*}%
Summing these equalities over $k=1,\dots ,n,$ we obtain 
\begin{align}
& -d\left( \Vert \mathbf{p}_{n}\Vert _{2}^{2}\right) +\left( 2\nu \Vert 
\mathbf{p}_{n}\Vert _{V}^{2}+\Vert \mathbf{q}_{n}\Vert _{2}^{2}\right)
\,dt=\left\{ 2\left( \left( (\nabla \mathbf{p}_{n})^{T}\right) \mathbf{y}+%
\mathbf{U},\mathbf{p}_{n}\right) \right.  \notag \\
&  \notag \\
& \left. -\int_{\Gamma }a(\mathbf{p}_{n}\cdot \bm{\tau })^{2}\,d\mathbf{%
\gamma }+2\left( \nabla _{\mathbf{y}}\mathbf{G}(t,\mathbf{y})^{T}\mathbf{q}%
_{n},\mathbf{p}_{n}\right) \right\} \,dt-2\left( \mathbf{q}_{n},\mathbf{p}%
_{n}\right) \,d{\mathcal{W}}_{t}  \notag \\
&  \notag \\
& =\left[ J_{1}+J_{2}+J_{3}\right] dt-2\left( \mathbf{q}_{n},\mathbf{p}%
_{n}\right) \,d{\mathcal{W}}_{t}.  \label{eqeq22}
\end{align}%
Let us estimate the terms $J_{1}$, $J_{2}$ and $J_{3}.$ Knowing that $div \, 
\mathbf{p}_{n}=0$, applying the Gagliardo-Nirenberg-Sobolev inequality (\ref%
{LI}) with $q=4$ and Young's inequality (\ref{yi}), we obtain%
\begin{eqnarray*}
J_{1} &=&-2\left( \left( \nabla \mathbf{y}\right) \mathbf{p}_{n},\mathbf{p}%
_{n}\right) +2\left( \mathbf{U},\mathbf{p}_{n}\right) \leqslant \,C\Vert 
\mathbf{y}\Vert _{H^{1}}\,\Vert \mathbf{p}_{n}\Vert _{4}^{2}+C\Vert \mathbf{U%
}\Vert _{2}\Vert \mathbf{p}_{n}\Vert _{2} \\
&& \\
&\leqslant &C\Vert \mathbf{y}\Vert _{H^{1}}\Vert \mathbf{p}_{n}\Vert
_{2}\Vert \nabla \mathbf{p}_{n}\Vert _{2}+C\Vert \mathbf{U}\Vert _{2}\Vert 
\mathbf{p}_{n}\Vert _{2} \\
&& \\
&\leqslant &C(\nu ^{-1}\Vert \mathbf{y}\Vert _{H^{1}}^{2}+1)\Vert \mathbf{p}%
_{n}\Vert _{2}^{2}+\frac{\nu }{2}||\mathbf{p}_{n}||_{V}^{2}+\Vert \mathbf{U}%
\Vert _{2}^{2},
\end{eqnarray*}%
and%
\begin{align*}
J_{2}& \leqslant C\Vert {a}\Vert _{L_{\infty }(\Gamma )}\Vert \mathbf{p}%
_{n}\Vert _{L_{2}(\Gamma )}^{2}\leqslant C\Vert {a}\Vert _{W_{p}^{1-\frac{1}{%
p}}(\Gamma )}||\mathbf{p}_{n}||_{L_{2}(\Omega )}||\nabla \mathbf{p}%
_{n}||_{L_{2}(\Omega )} \\
& \leqslant C\nu ^{-1}\Vert a\Vert {}_{W_{p}^{1-\frac{1}{p}}(\Gamma )}^{2}||%
\mathbf{p}_{n}||_{L_{2}(\Omega )}^{2}+\frac{\nu }{2}||\mathbf{p}%
_{n}||_{V}^{2}.
\end{align*}%
Also we have%
\begin{eqnarray*}
J_{3} &=&2|\left( \nabla _{\mathbf{y}}\mathbf{G}(t,\mathbf{y})^{T}\mathbf{q}%
_{n},\mathbf{p}_{n}\right) |\leqslant C||\nabla _{\mathbf{y}}\mathbf{G}(t,%
\mathbf{y})^{T}\mathbf{q}_{n}||_{2}||\mathbf{p}_{n}||_{2} \\
&\leqslant &\frac{1}{2}||\mathbf{q}_{n}||_{2}^{2}+C||\mathbf{p}_{n}||_{2}^{2}
\end{eqnarray*}%
by the assumption \eqref{cG}. Therefore, gathering the deduced estimates for
the terms $J_{1},$ $J_{2},$ $J_{2},$\ we obtain the existence of a positive
constant $\widetilde{C}_{1},$ such that 
\begin{equation}
J_{1}+J_{2}\ +J_{3}\leqslant 2r_{1}(t)||\mathbf{p}_{n}\Vert _{2}^{2}+\Vert 
\mathbf{U}\Vert _{2}^{2}+\frac{1}{2}||\mathbf{q}_{n}||_{2}^{2}+\nu ||\mathbf{%
p}_{n}||_{V}^{2}  \label{J_12}
\end{equation}%
with 
\begin{equation*}
r_{1}(t)=\widetilde{C}_{1}\max \left( \nu ^{-1},1\right) \left( 1+||(a,b)||_{%
\mathcal{H}_{p}(\Gamma )}^{2}+\Vert \mathbf{u}\Vert _{V}^{2}\right) \in
L_{1}(0,T),\quad P\text{-a.e. in }\Omega ,
\end{equation*}%
depending only on the data \eqref{eq00sec12} of our problem by \eqref{uny}.

Let us consider the function%
\begin{equation*}
\beta _{1}(t)=e^{\int_{0}^{t}r_{1}(s)\,ds}
\end{equation*}%
and apply the It\^{o} formula $\ $%
\begin{equation*}
\ d\left( \beta _{1}^{2}||\mathbf{p}_{n}\Vert _{2}^{2}\right) =\beta
_{1}^{2}d\left( ||\mathbf{p}_{n}\Vert _{2}^{2}\right) +2r_{1}\beta _{1}^{2}||%
\mathbf{p}_{n}\Vert _{2}^{2}\,dt.
\end{equation*}%
Using the equality \eqref{eqeq22}, integrating over the time interval $(t,T)$
and using the estimate \eqref{J_12}, we deduce the estimate 
\begin{eqnarray}
\beta _{1}^{2}(t)\left\Vert \mathbf{p}_{n}(t)\right\Vert _{2}^{2}
&+&\int_{t}^{T}\beta _{1}^{2}(\nu \left\Vert \mathbf{p}_{n}\right\Vert
_{V}^{2}+\frac{1}{2}\Vert \mathbf{q}_{n}\Vert _{2}^{2})\,ds  \notag
\label{eq_imp} \\
&\leqslant &\int_{t}^{T}\beta _{1}^{2}\Vert \mathbf{U}\Vert
_{2}^{2}\,ds-2\int_{t}^{T}\beta _{1}^{2}\left( \mathbf{q}_{n},\mathbf{p}%
_{n}\right) \,d{\mathcal{W}}_{s}.
\end{eqnarray}
Let us notice that the assumptions \eqref{cnd_1} imply 
\begin{equation*}
A_{\ast }\geqslant 16\widetilde{C}_{1}\max \left( \nu ^{-1},1\right) ,
\end{equation*}%
which gives $\mathbb{E}\left[ \beta _{1}^{8}(T)\right] <\infty .$ Therefore,
taking into accounting \eqref{pq_int}, the H\"{o}lder inequality yields 
\begin{eqnarray*}
\mathbb{E}\int_{0}^{T}\left( \beta _{1}^{2}\left( \mathbf{q}_{n},\mathbf{p}%
_{n}\right) \right) ^{2}\,ds &\leqslant &C\left( \mathbb{E}\int_{0}^{T}\beta
_{1}^{8}\,ds\right) ^{\frac{1}{2}}\left( \mathbb{E}\int_{0}^{T}\Vert \mathbf{%
q}_{n}\Vert _{2}^{8}\,ds\right) ^{\frac{1}{4}} \\
&\times &\left( \mathbb{E}\int_{0}^{T}\Vert \mathbf{p}_{n}\Vert
_{2}^{8}\,ds\right) ^{\frac{1}{4}}<C_{n}<\infty ,
\end{eqnarray*}%
which ensures that the stochastic process $M_{t}=\int_{t}^{T}\beta
_{1}^{2}\left( \mathbf{q}_{n},\mathbf{p}_{n}\right) \,d{\mathcal{W}}_{s}$ is
a square integrable martingale. Hence, taking the expectation in %
\eqref{eq_imp}, we infer that%
\begin{eqnarray*}
\mathbb{E}\left( \beta _{1}^{2}(t)\left\Vert \mathbf{p}_{n}(t)\right\Vert
_{2}^{2}\right) &+&\mathbb{E}\int_{t}^{T}\beta _{1}^{2}(\nu \left\Vert 
\mathbf{p}_{n}\right\Vert _{V}^{2}+\frac{1}{2}\Vert \mathbf{q}_{n}\Vert
_{2}^{2})\,ds \\
&\leqslant &\mathbb{E}\int_{t}^{T}\beta _{1}^{2}\Vert \mathbf{U}\Vert
_{2}^{2}\,ds\qquad \text{for }t\in (0,T).
\end{eqnarray*}%
Since $\forall t\in \lbrack 0,T]$, $\beta _{1}(t)$ $\geqslant 1,$ this
deduced estimate implies 
\begin{eqnarray*}
\sup_{t\in \lbrack 0,T]}\mathbb{E}\left( \left\Vert \mathbf{p}%
_{n}(t)\right\Vert _{2}^{2}\right) &+&\mathbb{E}\int_{0}^{T}(\nu \left\Vert 
\mathbf{p}_{n}\right\Vert _{V}^{2}+\frac{1}{2}\Vert \mathbf{q}_{n}\Vert
_{2}^{2})\,ds \\
&\leqslant &\mathbb{E}\int_{0}^{T}\beta _{1}^{2}\Vert \mathbf{U}\Vert
_{2}^{2}\,ds\leqslant C\left( \mathbb{E}\left[ \beta _{1}^{4}(T)\right]
\right) ^{\frac{1}{2}}\left( \mathbb{E}\int_{0}^{T}\Vert U\Vert
_{2}^{4}\right) ^{\frac{1}{2}},
\end{eqnarray*}%
then the first estimate of \eqref{qe1} holds.

\hfill $\hfill \hfill $

\textit{Step 3. Improvement of the uniform estimates. }Let us show the
second a priori estimate of \eqref{qe1}\ for $\left( \mathbf{p}_{n},\mathbf{q%
}_{n}\right) $ with a better regularity.\ Let us consider Galerkin's
approximation $\mathbf{p}_{n}$ that verifies (\ref{pn}). For each $n\in 
\mathbb{N},$ the function $\mathbf{p}_{n}(t,\cdot )\in H^{2}({\mathcal{O}}%
)\cap V_{n}$ a.e. in $\Omega ,$ being a linear combination of $\{\mathbf{h}%
_{k}\}\subset H^{2}({\mathcal{O}})\cap V$. The eigenfunctions $\left\{ 
\mathbf{h}_{k}\right\} _{k=1}^{\infty }$ of problem (\ref{y4}) fulfil the
homogeneous Navier-slip boundary condition in the system (\ref{y4}),
therefore the integration by parts of (\ref{pn}) yields the equality 
\begin{equation}
\begin{array}{l}
-d\left( \mathbf{p}_{n}(t),\mathbf{h}_{k}\right) =\left\{ \left( \left[ \nu
\Delta \mathbf{p}_{n}+2D(\mathbf{p}_{n})\mathbf{y}+\mathbf{U}\right] ,%
\mathbf{h}_{k}\right) -\int_{\Gamma }a(\mathbf{p}_{n}\cdot \bm{\tau })(%
\mathbf{h}_{k}\cdot \bm{\tau })\,d\mathbf{\gamma }\right. \\ 
\\ 
\qquad \mathbf{\qquad \qquad \quad ~}\left. +\left( \nabla _{\mathbf{y}}%
\mathbf{G}(t,\mathbf{y})^{T}\mathbf{q}_{n},\mathbf{h}_{k}\right) \right\}
\,dt-\left( \mathbf{q}_{n},\mathbf{h}_{k}\right) \,d{\mathcal{W}}_{t}%
\end{array}
\label{help}
\end{equation}%
for $k=1,2,\dots ,n.$ \ 

Let us introduce the Helmholtz projector $\mathbb{P}_{n}:L_{2}({\mathcal{O}}%
)\longrightarrow V_{n}$ of $V$ and define the function 
\begin{equation*}
A\mathbf{h}_{k}=\mathbb{P}_{n}\left( -\triangle \mathbf{h}_{k}\right)
=-\triangle \mathbf{h}_{k}+\nabla \pi _{k}\in V_{n}\text{\qquad with\qquad }%
\pi _{k}\in H^{1}({\mathcal{O}}),
\end{equation*}%
defined in (\ref{y4}). Multiplying (\ref{help}) by the eigenvalue $\lambda
_{k}$ of the problem (\ref{y4}), we have%
\begin{equation*}
\begin{array}{l}
-d\left( \mathbf{p}_{n}(t),\mathbf{h}_{k}\right) _{V}=\lambda _{k}\left\{
\left( \left[ \nu \Delta \mathbf{p}_{n}+2D(\mathbf{p}_{n})\mathbf{y}+\mathbf{%
U}\right] ,\mathbf{h}_{k}\right) -\int_{\Gamma }\left( a\mathbf{p}%
_{n}\right) \cdot \mathbf{h}_{k}\,d\mathbf{\gamma }\right. \\ 
\\ 
\qquad \quad \mathbf{\qquad \qquad \quad ~}\left. +\left( \nabla _{\mathbf{y}%
}\mathbf{G}(t,\mathbf{y})^{T}\mathbf{q}_{n},\mathbf{h}_{k}\right) \right\}
\,dt-\left( \mathbf{q}_{n},\mathbf{h}_{k}\right) _{V}\,d{\mathcal{W}}_{t},%
\end{array}%
\end{equation*}%
accounting that $\mathbf{p}_{n},$ $\mathbf{h}_{k}$\ are tangent to the
boundary $\Gamma $ and 
\begin{equation*}
\left( \mathbf{p}_{n},\lambda _{k}\mathbf{h}_{k}\right) =\left( \mathbf{p}%
_{n},\mathbf{h}_{k}\right) _{V}=\left( A\mathbf{p}_{n},\mathbf{h}_{k}\right)
,\text{\qquad }\left( \mathbf{q}_{n},\lambda _{k}\mathbf{h}_{k}\right)
=\left( \mathbf{q}_{n},\mathbf{h}_{k}\right) _{V}.
\end{equation*}%
The It\^{o} formula gives 
\begin{align*}
-d\left( \left( \mathbf{p}_{n},\mathbf{h}_{k}\right) _{V}^{2}\right) &
=2\lambda _{k}\left( A\mathbf{p}_{n}(t),\mathbf{h}_{k}\right) \left\{ \left( %
\left[ -\nu A\mathbf{p}_{n}+2D(\mathbf{p}_{n})\mathbf{y}+\mathbf{U}\right] ,%
\mathbf{h}_{k}\right) \right. \\
& \\
& \quad \left. -\int_{\Gamma }\left( a\mathbf{p}_{n}\right) \cdot \mathbf{h}%
_{k}\,d\mathbf{\gamma }+\left( \nabla _{\mathbf{y}}\mathbf{G}(t,\mathbf{y}%
)^{T}\mathbf{q}_{n},\mathbf{h}_{k}\right) \right\} \,dt \\
& \\
& \quad -2\lambda _{k}\left( A\mathbf{p}_{n}(t),\mathbf{h}_{k}\right) \left( 
\mathbf{q}_{n},\mathbf{h}_{k}\right) \,d{\mathcal{W}}_{t}-\left( \mathbf{q}%
_{n},\mathbf{h}_{k}\right) _{V}^{2}\,dt.
\end{align*}%
Dividing this equality by $\lambda _{k}$ and summing over $k=1,\dots ,n,$ we
obtain%
\begin{align}
-d\left[ \left\Vert \mathbf{p}_{n}\right\Vert _{V}^{2}\right] & +\left( 2\nu
||A\mathbf{p}_{n}||_{2}^{2}+||\mathbf{q}_{n}\Vert _{V}^{2}\right)
\,dt=2\left\{ \left( 2D(\mathbf{p}_{n})\mathbf{y},A\mathbf{p}_{n}\right)
+\left( \mathbf{U,}A\mathbf{p}_{n}\right) \right.  \notag \\
&  \notag \\
& \quad -\left. \int_{\Gamma }(a\mathbf{p}_{n})\cdot A\mathbf{p}_{n}\,d%
\mathbf{\gamma }+\left( \nabla _{\mathbf{y}}\mathbf{G}(t,\mathbf{y})^{T}%
\mathbf{q}_{n},A\mathbf{p}_{n}\right) \right\} dt  \notag \\
&  \notag \\
& =\left[ I_{1}+I_{2}+I_{3}+I_{4}\right] dt-2\left( \mathbf{q}_{n},\mathbf{p}%
_{n}\right) _{V}\,d{\mathcal{W}}_{t}.  \label{a2}
\end{align}%
Let us estimate the terms $I_{1},$ $I_{2},$ $I_{3}$\ and $I_{4}$. \ We have%
\begin{equation*}
I_{1}\leqslant C\Vert \mathbf{y}\Vert _{6}\Vert D\mathbf{p}_{n}\Vert
_{3}\,\Vert A\mathbf{p}_{n}\Vert _{2}.
\end{equation*}%
Using the Gagliardo-Nirenberg-Sobolev inequality (\ref{LI}) with $q=6$ and
with $q=3,$ respectively, 
\begin{equation*}
\Vert \mathbf{y}\Vert _{6}\leqslant Cf(t)\equiv C(\Vert \mathbf{y}\Vert
_{2}^{1/3}\Vert \nabla \mathbf{y}\Vert _{2}^{2/3}+\Vert \mathbf{y}\Vert
_{2}),
\end{equation*}%
and 
\begin{equation*}
\Vert D\mathbf{p}_{n}\Vert _{3}\leqslant C\left( \Vert D\mathbf{p}_{n}\Vert
_{2}^{2/3}\Vert \nabla \left( D\mathbf{p}_{n}\right) \Vert _{2}^{1/3}+\Vert D%
\mathbf{p}_{n}\Vert _{2}\right) ,
\end{equation*}%
we get%
\begin{eqnarray*}
I_{1} &\leqslant &Cf(t)\left( \Vert \mathbf{p}_{n}\Vert _{V}^{2/3}\Vert A%
\mathbf{p}_{n}\Vert _{2}^{1/3}+\Vert \mathbf{p}_{n}\Vert _{V}\right) \,\Vert
A\mathbf{p}_{n}\Vert _{2} \\
&\leqslant &2C\max (\nu ^{-2},1)\left( f^{3}(t)+f^{2}(t)\right) \Vert 
\mathbf{p}_{n}\Vert _{V}^{2}+\frac{\nu }{3}\Vert A\mathbf{p}_{n}\Vert
_{2}^{2}.
\end{eqnarray*}%
Applying Young's inequality (\ref{yi}), we can show%
\begin{eqnarray*}
f^{3}(t)+f^{2}(t) &\leqslant &C(\Vert \mathbf{y}\Vert _{2}\Vert \nabla 
\mathbf{y}\Vert _{2}^{2}+\Vert \mathbf{y}\Vert _{2}^{3}+\Vert \nabla \mathbf{%
y}\Vert _{2}^{2}+1) \\
&\leqslant &C(\Vert \mathbf{u}\Vert _{2}^{2}\Vert \mathbf{u}\Vert
_{V}^{2}+\Vert \mathbf{u}\Vert _{2}^{4}+||(a,b)||_{\mathcal{H}_{p}(\Gamma
)}^{4}+1),
\end{eqnarray*}%
therefore there exists a positive constant $C,$ such that 
\begin{equation*}
I_{1}\leqslant 2h(t)\Vert \mathbf{p}_{n}\Vert _{V}^{2}+\frac{\nu }{3}\Vert A%
\mathbf{p}_{n}\Vert _{2}^{2}
\end{equation*}%
with $\ $%
\begin{equation*}
h(t)=C\max (\nu ^{-2},1)(1+||(a,b)||_{\mathcal{H}_{p}(\Gamma )}^{4}+\Vert 
\mathbf{u}\Vert _{2}^{2}\Vert \mathbf{u}\Vert _{V}^{2}+\Vert \mathbf{u}\Vert
_{2}^{4})\in L_{1}(0,T),\ \ P-\text{a.e. in }\Omega ,
\end{equation*}%
by the estimates (\ref{uny}).

Also we have
\begin{equation*}
I_{2}=\int_{{\mathcal{O}}}\mathbf{U\cdot }A\mathbf{p}_{n}d\mathbf{x}%
\leqslant \Vert \mathbf{U}\Vert _{2}\Vert A\mathbf{p}_{n}\Vert _{2}\leqslant
C\nu ^{-1}\Vert \mathbf{U}\Vert _{2}^{2}+\frac{\nu }{3}\Vert A\mathbf{p}%
_{n}\Vert _{2}^{2}
\end{equation*}%
and 
\begin{eqnarray*}
I_{3} &=&-\int_{\Gamma }(a\mathbf{p}_{n})\cdot A\mathbf{p}_{n}\,d\mathbf{%
\gamma }\leqslant \Vert a\Vert _{L_{\infty }(\Gamma )}\Vert \mathbf{p}%
_{n}\Vert _{H^{1/2}({\mathcal{O}})}\Vert A\mathbf{p}_{n}\Vert _{H^{-1/2}({%
\mathcal{O}})} \\
&\leqslant &C\Vert a\Vert _{W_{p}^{1-\frac{1}{p}}(\Gamma )}\Vert \mathbf{p}%
_{n}\Vert _{H^{1}}\Vert A\mathbf{p}_{n}\Vert _{2}+\frac{\nu }{4}\Vert A%
\mathbf{p}_{n}\Vert _{2}^{2} \\
&\leqslant &2C\nu ^{-1}\Vert a\Vert _{W_{p}^{1-\frac{1}{p}}(\Gamma
)}^{2}\Vert \mathbf{p}_{n}\Vert _{V}^{2}+\frac{\nu }{3}\Vert A\mathbf{p}%
_{n}\Vert _{2}^{2}
\end{eqnarray*}%
by Theorem 1.5, p. 8 of \cite{gir}.

The term $I_{4}$ is estimated as%
\begin{eqnarray*}
I_{4} &=&2|\left( \nabla _{\mathbf{y}}\mathbf{G}(t,\mathbf{y})^{T}\mathbf{q}%
_{n},\mathbf{p}_{n}\right) _{V}|\leqslant C||\nabla _{\mathbf{y}}\mathbf{G}%
(t,\mathbf{y})^{T}\mathbf{q}_{n}||_{V}||\mathbf{p}_{n}||_{V} \\
&\leqslant &\frac{1}{2}||\mathbf{q}_{n}||_{V}^{2}+2C||\mathbf{p}%
_{n}||_{V}^{2}
\end{eqnarray*}%
by \eqref{cG}.

Therefore, substituting the above deduced estimates for the terms $I_{1},$ $%
I_{2},$ $I_{3},$\ $I_{4}$\ in \eqref{a2} we obtain that there exists a
positive constant $\widetilde{C}_{2},$ such that%
\begin{align}
-d\left( \left\Vert \mathbf{p}_{n}\right\Vert _{V}^{2}\right) & +\left( \nu
||A\mathbf{p}_{n}||_{2}^{2}+\frac{1}{2}||\mathbf{q}_{n}\Vert _{V}^{2}\right)
dt  \notag \\
& \leqslant \left( 2r_{2}(t)\Vert \mathbf{p}_{n}\Vert _{V}^{2}+\Vert \mathbf{%
U}\Vert _{2}^{2}\right) dt-2\left( \mathbf{q}_{n},\mathbf{p}_{n}\right)
_{V}\,d{\mathcal{W}}_{t}  \label{con22}
\end{align}%
with 
\begin{equation*}
r_{2}(t)=\widetilde{C}_{2}\max (\nu ^{-2},1)(1+||(a,b)||_{\mathcal{H}%
_{p}(\Gamma )}^{4}+\Vert \mathbf{u}\Vert _{2}^{2}\Vert \mathbf{u}\Vert
_{V}^{2}+\Vert \mathbf{u}\Vert _{2}^{4})\in L_{1}(0,T),
\end{equation*}%
$P-$a.e. in $\Omega ,$ by (\ref{uny}).

Introducing the function 
\begin{equation*}
\beta _{2}(t)=e^{\int_{0}^{t}r_{2}(s)ds}
\end{equation*}%
and applying the It\^{o} formula $\ $%
\begin{equation*}
\ d\left( \beta _{2}^{2}||\mathbf{p}_{n}\Vert _{V}^{2}\right) =\beta
_{2}^{2}d\left( ||\mathbf{p}_{n}\Vert _{V}^{2}\right) +2r_{2}\beta _{2}^{2}||%
\mathbf{p}_{n}\Vert _{V}^{2}\,dt
\end{equation*}%
to (\ref{con22}), we obtain that 
\begin{eqnarray}
\beta _{2}^{2}(t)\left\Vert \mathbf{p}_{n}(t)\right\Vert _{V}^{2}
&+&\int_{t}^{T}\beta _{2}^{2}\left( \nu ||A\mathbf{p}_{n}||_{2}^{2}+\frac{1}{%
2}||\mathbf{q}_{n}\Vert _{V}^{2}\right) \,ds  \notag \\
&\leqslant &\int_{t}^{T}\beta _{2}^{2}\Vert \mathbf{U}\Vert
_{2}^{2}\,ds-2\int_{t}^{T}\beta _{2}^{2}\left( \mathbf{q}_{n},\mathbf{p}%
_{n}\right) _{V}\,d{\mathcal{W}}_{s}  \label{0qe}
\end{eqnarray}%
by \eqref{con22}.

Let us notice that the assumptions \eqref{cnd_1} ensure that 
\begin{equation*}
\mathrm{min}(B_{\ast },\frac{B_{\ast }}{\widehat{C}})\geqslant 24\widetilde{C%
}_{2}\max (\nu ^{-2},1) ,
\end{equation*}%
which imply $\mathbb{E}\left[ \beta _{2}^{8}(T)\right] <\infty .$ Then the
regularity property \eqref{pq_int} and the H\"{o}lder inequality give 
\begin{eqnarray*}
\mathbb{E}\int_{0}^{T}\left( \beta _{2}^{2}\left( \mathbf{p}_{n},\mathbf{q}%
_{n}\right) \right) ^{2}\,ds &\leqslant &C\left( \mathbb{E}\int_{0}^{T}\beta
_{2}^{8}\,ds\right) ^{\frac{1}{2}}\left( \mathbb{E}\int_{0}^{T}\Vert \mathbf{%
q}_{n}\Vert _{2}^{8}\,ds\right) ^{\frac{1}{4}} \\
&\times &\left( \mathbb{E}\int_{0}^{T}\Vert \mathbf{p}_{n}\Vert
_{2}^{8}\,ds\right) ^{\frac{1}{4}}<C_{n}<\infty ,
\end{eqnarray*}%
then the stochastic process $M_{t}=\int_{t}^{T}\beta _{2}^{2}\left( \mathbf{q%
}_{n},\mathbf{p}_{n}\right) \,d{\mathcal{W}}_{s}$ is a square integrable
martingale.

Thus, taking the expectation of \eqref{0qe}, we derive 
\begin{eqnarray*}
\mathbb{E}\left( \beta _{2}^{2}(t)\left\Vert \mathbf{p}_{n}(t)\right\Vert
_{V}^{2}\right) &+&\mathbb{E}\int_{t}^{T}\beta _{2}^{2}(\nu \left\Vert A%
\mathbf{p}_{n}\right\Vert _{2}^{2}+\frac{1}{2}\Vert \mathbf{q}_{n}\Vert
_{V}^{2})\,ds \\
&\leqslant &\mathbb{E}\int_{t}^{T}\beta _{2}^{2}\Vert \mathbf{U}\Vert
_{2}^{2}\,ds \\
&\leqslant &C\left( \mathbb{E}\left[ \beta _{1}^{4}(T)\right] \right) ^{%
\frac{1}{2}}\left( \mathbb{E}\int_{0}^{T}\Vert \mathbf{U}\Vert
_{2}^{4}\right) ^{\frac{1}{2}},\qquad \text{for }t\in (0,T),
\end{eqnarray*}%
which implies 
\begin{eqnarray*}
\sup_{t\in \lbrack 0,T]}\mathbb{E}\left( \left\Vert \mathbf{p}%
_{n}(t)\right\Vert _{V}^{2}\right) &+&\mathbb{E}\int_{0}^{T}(\nu \left\Vert A%
\mathbf{p}_{n}\right\Vert _{2}^{2}+\frac{1}{2}\Vert \mathbf{q}_{n}\Vert
_{V}^{2})\,ds \\
&\leqslant &C\left( \mathbb{E}\int_{0}^{T}\Vert \mathbf{U}\Vert
_{2}^{4}\right) ^{\frac{1}{2}},
\end{eqnarray*}%
since $\beta _{2}(t)$ $\geqslant 1,$ $\forall t\in \lbrack 0,T]$.


\textit{Step 4. Passage to the limit. } The following inequality 
\begin{equation}
\Vert \mathbf{p}_{n}\Vert _{H^{2}}\leqslant C(\Vert A\mathbf{p}_{n}\Vert
_{2}+\Vert a\mathbf{p}_{n}\Vert _{H^{1/2}(\Gamma )})  \label{ap}
\end{equation}%
holds due to the regular properties of the Stokes operator $A$ (see Theorem
9 of \cite{amr2} and Theorem 2 of \cite{S73}).

Therefore the estimates \eqref{qe1} imply that there exists a suitable
subsequence of $\left\{ (\mathbf{p}_{n},\mathbf{q}_{n})\right\} ,$ such that 
\begin{eqnarray*}
\mathbf{p}_{n} &\rightharpoonup &\mathbf{p}\qquad \mbox{*- weakly in
}\ \ L_{\infty }(0,T;L_{2}(\Omega ;H)), \\
&&\quad \quad \qquad \mbox{weakly in
}\ L_{2}(\Omega \times (0,T);V),\qquad \\
\mathbf{q}_{n} &\rightharpoonup &\mathbf{q}\qquad \mbox{weakly in
}\ L_{2}(\Omega \times (0,T);H)
\end{eqnarray*}%
and 
\begin{eqnarray*}
\mathbf{p}_{n} &\rightharpoonup &\mathbf{p}\qquad \mbox{*- weakly in
}\ L_{\infty }(0,T;L_{2}(\Omega ;V)), \\
&&\quad \quad \quad \quad \mbox{ weakly in
}\ L_{2}(\Omega \times (0,T);H^{2}({\mathcal{O}})), \\
A\mathbf{p}_{n} &\rightharpoonup &A\mathbf{p}\qquad \mbox{weakly in
}\ L_{2}(\Omega \times {\mathcal{O}}_{T}),\qquad \\
\mathbf{q}_{n} &\rightharpoonup &\mathbf{q}\qquad \mbox{weakly in
}\ L_{2}(\Omega \times (0,T);V).
\end{eqnarray*}

Taking the limit transition in the system (\ref{pn}), written in the
distributional sense, we derive that the limit pair $(\mathbf{p},\mathbf{q})$
is the solution of\ the following integral system%
\begin{align*}
\left( \mathbf{p}(t),\boldsymbol{\phi }\right) & +\int_{t}^{T}\left\{
-\left( 2D(\mathbf{p})\mathbf{y},\boldsymbol{\phi }\right) +\nu \left( 
\mathbf{p},\,\boldsymbol{\phi }\right) _{V}-\left( \mathbf{U},\boldsymbol{%
\phi }\right) +\int_{\Gamma }{a}(\mathbf{p}\cdot \bm{\tau })(\boldsymbol{%
\phi }\cdot \bm{\tau })\,d\mathbf{\gamma }\right\} \,ds \\
& =\int_{t}^{T}\left( \nabla _{\mathbf{y}}\mathbf{G}(t,\mathbf{y})^{T}%
\mathbf{q},\boldsymbol{\phi }\right) \,ds-\int_{t}^{T}\left( \boldsymbol{q},%
\boldsymbol{\phi }\right) \,d{\mathcal{W}}_{s},\mathbf{\qquad }\text{a.e. in 
}\Omega ,
\end{align*}%
which is valid for all $t\in \lbrack 0,T]$, $\boldsymbol{\phi }\in V.$

By the result given on the page 208 of \cite{tem}, we deduce the existence
of the pressure $\pi \in H^{-1}(0,T;L_{2}({\mathcal{O}})),$ a.e. in $\Omega
. $ Moreover, reasoning as in Proposition 1.2, p. 182 of \cite{tem} and
using the inequality%
\begin{equation}
\Vert \mathbf{p}\Vert _{H^{2}}+\Vert \nabla \pi \Vert _{2}\leqslant C\left(
\Vert A\mathbf{p}\Vert _{2}+\Vert a\mathbf{p}\Vert _{H^{1/2}(\Gamma
)}\right) ,  \label{n}
\end{equation}%
by Theorem 9 of \cite{amr2}, we derive that $\pi \in L_{2}(0,T;H^{1}({%
\mathcal{O}})).$

In addition, it follows that the triple $(\mathbf{p},\mathbf{q},\pi )$
satisfies the estimates (\ref{qe1}) by the lower semi-continuity of integral
in the $L_{2}$-space.
\end{proof}

\section{Duality property}

\label{sec9}\setcounter{equation}{0}

In the next proposition we present the \textit{duality} property for the
solution $\mathbf{z}$ of the linearized equation (\ref{linearized}) and the
adjoint triplet $(\mathbf{p},\mathbf{q},\pi ),$ being the solution of (\ref%
{adjoint}).

\begin{proposition}
For $P$-a.e. in $\Omega $ the solution $\mathbf{z}$ of \ the system (\ref%
{linearized}) and the solution $(\mathbf{p},\pi )$ of the adjoint system (%
\ref{adjoint}) verify the following \textit{\ duality relation}%
\begin{eqnarray}
\int_{0}^{T}\left( \mathbf{z},\mathbf{U}\right) \,dt
&=&\int_{0}^{T}\int_{\Gamma }\left\{ \nu g(\mathbf{p}\cdot \bm{\tau })+f%
\left[ \pi -\left( \mathbf{p}\cdot \mathbf{y}\right) -2\nu \left( D(\mathbf{p%
})\mathbf{n}\right) \cdot \mathbf{n}\,\right] \right\} \ d\mathbf{\gamma }dt
\notag \\
&&+\int_{0}^{T}\left[ \left( \mathbf{q,z}\right) \,+\left( \nabla _{\mathbf{y%
}}\mathbf{G}(t,\mathbf{y})\mathbf{z},\mathbf{p}\right) \right] \,d{\mathcal{W%
}}_{t}.  \label{duality}
\end{eqnarray}
\end{proposition}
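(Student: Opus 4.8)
The plan is to apply the It\^o product formula to the $L_2$-pairing $t\mapsto(\mathbf{z}(t),\mathbf{p}(t))$, where $\mathbf{z}=\tilde{\mathbf{z}}+\mathbf{f}$ solves the forward linearized system \eqref{linearized} and $(\mathbf{p},\mathbf{q},\pi)$ is the backward adjoint solution of \eqref{adjoint}, and then to integrate over $[0,T]$. Both are adapted semimartingales driven by the same Wiener process $\mathcal{W}$, with diffusion coefficients $\nabla_{\mathbf{y}}\mathbf{G}(t,\mathbf{y})\mathbf{z}$ and $\mathbf{q}$ respectively, so the product rule reads
\[
d(\mathbf{z},\mathbf{p})=(d\mathbf{z},\mathbf{p})+(\mathbf{z},d\mathbf{p})+\left(\nabla_{\mathbf{y}}\mathbf{G}(t,\mathbf{y})\mathbf{z},\mathbf{q}\right)\,dt ,
\]
the last term being the quadratic cross-variation, computed with the convention \eqref{product}. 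Using $\mathbf{z}(0)=0$ and $\mathbf{p}(T)=0$, the left-hand side integrates to zero, leaving $\int_0^T\big[(d\mathbf{z},\mathbf{p})+(\mathbf{z},d\mathbf{p})\big]+\int_0^T(\nabla_{\mathbf{y}}\mathbf{G}\,\mathbf{z},\mathbf{q})\,dt=0$.

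Substituting the drifts and diffusions of \eqref{linearized} and \eqref{adjoint}, I would first note that the two It\^o stochastic integrals assemble exactly into $\int_0^T[(\mathbf{q},\mathbf{z})+(\nabla_{\mathbf{y}}\mathbf{G}\,\mathbf{z},\mathbf{p})]\,d\mathcal{W}_t$, the martingale term of \eqref{duality}, while the cross-variation $(\nabla_{\mathbf{y}}\mathbf{G}\,\mathbf{z},\mathbf{q})$ cancels the drift contribution $(\mathbf{z},\nabla_{\mathbf{y}}\mathbf{G}^{T}\mathbf{q})$ by the very definition of the transpose. It then remains to treat the deterministic drift by spatial integration by parts. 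The viscous terms, via Green's formula \eqref{integrate}, leave only the boundary contribution $2\nu\int_\Gamma[(D(\mathbf{z})\mathbf{n})\cdot\mathbf{p}-(D(\mathbf{p})\mathbf{n})\cdot\mathbf{z}]\,d\gamma$; the pressure of the linearized system disappears since $\mathrm{div}\,\mathbf{p}=0$ and $\mathbf{p}\cdot\mathbf{n}=0$, whereas the adjoint pressure contributes $\int_\Gamma f\pi\,d\gamma$ because $\mathbf{z}\cdot\mathbf{n}=f$ and $\mathrm{div}\,\mathbf{z}=0$.

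The heart of the computation is the boundary algebra. On $\Gamma$ I would decompose both fields in the orthonormal frame $(\mathbf{n},\bm{\tau})$, using $\mathbf{p}\cdot\mathbf{n}=0$, $\mathbf{z}\cdot\mathbf{n}=f$, and the two Navier-slip relations $2(D(\mathbf{z})\mathbf{n})\cdot\bm{\tau}=g-\alpha(\mathbf{z}\cdot\bm{\tau})$ and $2(D(\mathbf{p})\mathbf{n})\cdot\bm{\tau}=-(\alpha+a/\nu)(\mathbf{p}\cdot\bm{\tau})$. The $\alpha$-contributions cancel, so the viscous part reduces to the integrand $\nu g(\mathbf{p}\cdot\bm{\tau})-2\nu f(D(\mathbf{p})\mathbf{n})\cdot\mathbf{n}+a(\mathbf{z}\cdot\bm{\tau})(\mathbf{p}\cdot\bm{\tau})$. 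Meanwhile the convection terms $-((\mathbf{z}\cdot\nabla)\mathbf{y},\mathbf{p})-((\mathbf{y}\cdot\nabla)\mathbf{z},\mathbf{p})-(\mathbf{z},2D(\mathbf{p})\mathbf{y})$, after integration by parts using $\mathrm{div}\,\mathbf{y}=\mathrm{div}\,\mathbf{z}=0$ and the pairing identity $(\mathbf{z},2D(\mathbf{p})\mathbf{y})=((\mathbf{y}\cdot\nabla)\mathbf{p},\mathbf{z})+((\mathbf{z}\cdot\nabla)\mathbf{p},\mathbf{y})$, leave only $-\int_\Gamma f(\mathbf{p}\cdot\mathbf{y})\,d\gamma-\int_\Gamma a(\mathbf{z}\cdot\bm{\tau})(\mathbf{p}\cdot\bm{\tau})\,d\gamma$, all interior pieces cancelling pairwise. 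Adding the viscous and convection boundary terms, the spurious factor $a(\mathbf{z}\cdot\bm{\tau})(\mathbf{p}\cdot\bm{\tau})$ cancels and one is left with precisely the boundary integrand $\nu g(\mathbf{p}\cdot\bm{\tau})+f[\pi-(\mathbf{p}\cdot\mathbf{y})-2\nu(D(\mathbf{p})\mathbf{n})\cdot\mathbf{n}]$ of \eqref{duality}.

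The main obstacle I anticipate is the rigorous justification of the product formula and of the spatial integration by parts at this low level of regularity: $\mathbf{z}$ lies only in $V$, and the boundary trace $D(\mathbf{p})\mathbf{n}$ must be meaningful, which is exactly where the $H^{2}$-regularity of $\mathbf{p}$ from Proposition \ref{ex_uniq_adj} and the estimates \eqref{qe1} are indispensable. I would carry this out by first writing the identity at the Galerkin level for the approximations $(\tilde{\mathbf{z}}_n,\mathbf{p}_n)$ (or by a joint time-mollification), where the product rule and all integrations by parts are classical, and then passing to the limit using the a priori bounds \eqref{eqsa}, \eqref{eqsa4} and \eqref{qe1}. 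The integrability needed to control the products and to ensure that the stochastic integrals are genuine martingales (rather than local ones) is supplied by the exponential integrability of the state established in Proposition \ref{pop}.
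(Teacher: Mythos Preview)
Your proposal is correct and follows essentially the same route as the paper: apply the It\^o product formula to $(\mathbf{z},\mathbf{p})$, integrate over $[0,T]$ using $\mathbf{z}(0)=0$, $\mathbf{p}(T)=0$, and combine the spatial integration by parts with the Navier-slip boundary conditions so that the $\alpha$-terms and the $a(\mathbf{z}\cdot\bm{\tau})(\mathbf{p}\cdot\bm{\tau})$ term cancel, leaving exactly \eqref{duality}. Your treatment is in fact more careful than the paper's, which performs the computation formally (testing the weak formulation of $\mathbf{z}$ with the time-dependent $\mathbf{p}$) without the Galerkin justification you outline.
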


\begin{proof}
Taking $\boldsymbol{\varphi }=\mathbf{p}\in L_{2}(0,T;V)$ \ in Definition %
\ref{def6.1}, we obtain the equality%
\begin{eqnarray}
\left( d\mathbf{z},\mathbf{p}\right) &=&\left[ -\nu \left( \mathbf{z},%
\mathbf{p}\right) _{V}\,+\nu \int_{\Gamma }g(\mathbf{p}\cdot {\bm{\tau }})\,d%
\mathbf{\gamma }-(\left( \mathbf{z}\cdot \nabla )\mathbf{y},\mathbf{p}%
\right) -(\left( \mathbf{y}\cdot \nabla )\mathbf{z},\mathbf{p}\right) \,%
\right] \,dt  \notag \\
&&+\left( \nabla _{\mathbf{y}}\mathbf{G}(t,\mathbf{y})\mathbf{z},\mathbf{p}%
\right) \,d{\mathcal{W}}_{t}.  \label{dzp}
\end{eqnarray}%
Multiplying (\ref{adjoint})$\ $by $-\mathbf{z}$, we have%
\begin{align}
\left( d\mathbf{p},\mathbf{z}\right) & =\left[ -\left( \nu \Delta \mathbf{p,z%
}\right) -\left( 2D(\mathbf{p})\mathbf{y,z}\right) +\left( \nabla \pi 
\mathbf{,z}\right) -\left( \mathbf{U,z}\right) \right] \,dt  \notag \\
& \quad -\left( \nabla _{\mathbf{y}}\mathbf{G}(t,\mathbf{y})^{T}\mathbf{q,z}%
\right) \,dt+\left( \mathbf{q,z}\right) \,d{\mathcal{W}}_{t}\mathbf{\qquad }%
\text{a.e. in }\Omega \times (0,T).  \label{duality10}
\end{align}%
Using that the functions $\mathbf{y}$, $\mathbf{z}$ and $\mathbf{p}$ are
divergence free, the equality (\ref{integrate}) and the integration by
parts\ give the following three relations%
\begin{equation*}
-\nu \left( \Delta \mathbf{p},\mathbf{z}\right) =-\nu \int_{\Gamma }2\left(
D(\mathbf{p})\mathbf{n}\right) \cdot \mathbf{z}\,d\mathbf{\gamma }+2\nu
\left( D\mathbf{p},D\mathbf{z}\right) ,
\end{equation*}%
\begin{align*}
-\left( \mathbf{z},\left( 2D(\mathbf{p})\mathbf{y}\right) \right) & =\left( %
\left[ \left( \mathbf{y}\cdot \nabla \right) \mathbf{z}+\left( \mathbf{z}%
\cdot \nabla \right) \mathbf{y}\right] ,\mathbf{p}\right) \\
& \quad -\int_{\Gamma }\left( \left( \mathbf{y}\cdot \mathbf{n}\right)
\left( \mathbf{p}\cdot \mathbf{z}\right) +\left( \mathbf{z}\cdot \mathbf{n}%
\right) \left( \mathbf{p}\cdot \mathbf{y}\right) \right) \,d\mathbf{\gamma }
\end{align*}%
and 
\begin{equation*}
\left( \mathbf{z},\nabla \pi \right) =\int_{\Gamma }\left( \mathbf{z}\cdot 
\mathbf{n}\right) \pi \,d\mathbf{\gamma }.
\end{equation*}%
Using these equalities in the right hand side of (\ref{duality10}), we derive%
\begin{align}
\left( d\mathbf{p},\mathbf{z}\right) & =\left\{ \left( \left[ \left( \mathbf{%
y}\cdot \nabla \right) \mathbf{z}+\left( \mathbf{z}\cdot \nabla \right) 
\mathbf{y}\right] ,\mathbf{p}\right) +2\nu \,\left( D\mathbf{p},D\mathbf{z}%
\right) -\left( \mathbf{U,z}\right) \right\} \,dt  \notag \\
& \quad +\left( \int_{\Gamma }\left[ \left( \mathbf{z}\cdot \mathbf{n}%
\right) \pi -\left\{ \left( \mathbf{y}\cdot \mathbf{n}\right) \left( \mathbf{%
p}\cdot \mathbf{z}\right) +\left( \mathbf{z}\cdot \mathbf{n}\right) \left( 
\mathbf{p}\cdot \mathbf{y}\right) +\left( 2\nu D(\mathbf{p})\mathbf{n}%
\right) \cdot \mathbf{z}\right\} \right] \,d\mathbf{\gamma }\right) \,dt 
\notag \\
& \quad -\left( \nabla _{\mathbf{y}}\mathbf{G}(t,\mathbf{y})^{T}\mathbf{q,z}%
\right) \,dt+\left( \mathbf{q,z}\right) \,d{\mathcal{W}}_{t}.  \label{dpz}
\end{align}%
Hence, applying the It\^{o} formula 
\begin{equation*}
d\left( \mathbf{p},\mathbf{z}\right) =\left( d\mathbf{p},\mathbf{z}\right)
+\left( \mathbf{p},d\mathbf{z}\right) +\left( d\mathbf{p},d\mathbf{z}\right)
,
\end{equation*}%
we deduce%
\begin{align*}
d\left( \mathbf{z},\mathbf{p}\right) & =\left\{ -\left( \mathbf{U,z}\right)
+\nu \int_{\Gamma }[g(\mathbf{p}\cdot {\bm{\tau })}-\alpha (\mathbf{z}\cdot {%
\bm{\tau })}(\mathbf{p}\cdot {\bm{\tau })}+\left( \mathbf{z}\cdot \mathbf{n}%
\right) \pi \right. \, \\
& \quad \left. -\left\{ \left( \mathbf{y}\cdot \mathbf{n}\right) \left( 
\mathbf{p}\cdot \mathbf{z}\right) +\left( \mathbf{z}\cdot \mathbf{n}\right)
\left( \mathbf{p}\cdot \mathbf{y}\right) +\left( 2\nu D(\mathbf{p})\mathbf{n}%
\right) \cdot \mathbf{z}\right\} \,]\ d\mathbf{\gamma }\right\} \,dt \\
& \quad +\left[ \left( \mathbf{q,z}\right) \,+\left( \nabla _{\mathbf{y}}%
\mathbf{G}(t,\mathbf{y})\mathbf{z},\mathbf{p}\right) \right] \,d{\mathcal{W}}%
_{t}.
\end{align*}%
Due to the boundary conditions for $\mathbf{y}$, $\ \mathbf{z}$ and $\mathbf{%
p}$%
\begin{eqnarray*}
\mathbf{y}\cdot \mathbf{n} &=&a,\qquad \mathbf{z}\cdot \mathbf{n}=f,\qquad 
\mathbf{p}\cdot \mathbf{n}=0, \\
\left( 2\nu D(\mathbf{p})\mathbf{n}\right) \cdot \bm{\tau } &=&-\nu ({\frac{a%
}{\nu }}+\alpha )(\mathbf{p}\cdot \bm{\tau }),
\end{eqnarray*}%
we obtain%
\begin{align*}
d\left( \mathbf{z},\mathbf{p}\right) & =\left\{ -\left( \mathbf{U,z}\right)
+\int_{\Gamma }\nu g(\mathbf{p}\cdot {\bm{\tau })}-\nu \alpha (\mathbf{p}%
\cdot {\bm{\tau })}(\mathbf{z}\cdot {\bm{\tau })}+f\pi \right. \, \\
& \quad -\{a(\mathbf{p}\cdot {\bm{\tau })}(\mathbf{z}\cdot {\bm{\tau })}+f(%
\mathbf{p}\cdot \mathbf{y}{)}+\left( \left( 2\nu D(\mathbf{p})\mathbf{n}%
\right) \cdot \mathbf{n}\right) f \\
& \quad \left. -\nu ({\frac{a}{\nu }}+\alpha )(\mathbf{p}\cdot \bm{\tau })(%
\mathbf{z}\cdot {\bm{\tau })}\}\,d\mathbf{\gamma }\right\} \,dt \\
& \,\quad +\left[ \left( \mathbf{q,z}\right) \,+\left( \nabla _{\mathbf{y}}%
\mathbf{G}(t,\mathbf{y})\mathbf{z},\mathbf{p}\right) \right] \,d{\mathcal{W}}%
_{t} \\
& =\left\{ -\left( \mathbf{U,z}\right) +\int_{\Gamma }\left[ \nu g(\mathbf{p}%
\cdot {\bm{\tau })}+f\pi -\{f(\mathbf{p}\cdot \mathbf{y}{)}+\left( \left(
2\nu D(\mathbf{p})\mathbf{n}\right) \cdot \mathbf{n}\right) f\}\right] \,d%
\mathbf{\gamma }\right\} \\
& +\left[ \left( \mathbf{q,z}\right) \,+\left( \nabla _{\mathbf{y}}\mathbf{G}%
(t,\mathbf{y})\mathbf{z},\mathbf{p}\right) \right] \,d{\mathcal{W}}_{t}
\end{align*}%
by \eqref{dzp}, \eqref{dpz}. Integrating this equality over the time
interval $(0,T)$, we obtain the duality property (\ref{duality}).
\end{proof}

\bigskip $\hfill $

\section{Main result. First-order optimality { condition }}

\label{sec10}\setcounter{equation}{0}

Let us consider 
\begin{equation*}
\widehat{C}_{\max }=24\,\max \left( \widehat{C}_{1},\widehat{C}_{2},%
\widetilde{C}_{1},\widetilde{C}_{2}\right) \times \max \left( \nu
^{-2},1\right)
\end{equation*}%
with the constants $\widehat{C}_{1},\widehat{C}_{2},\widetilde{C}_{1},%
\widetilde{C}_{2}$ defined in Theorem \ref{Lips}, Proposition \ref%
{ex_uniq_lin} and Proposition \ref{ex_uniq_adj}. In what follows we will
assume that 
\begin{equation}  \label{CF}
\min \left( A_{\ast },B_{\ast },\frac{B_{\ast }}{\widehat{C}}\right)
\geqslant \widehat{C}_{\max },
\end{equation}
where $A_{\ast },$ $B_{\ast }$ and \ $\widehat{C}$ defined by (\ref{aaa})
and (\ref{CCC}).

Let us point that the assumptions (\ref{bound0}) and Theorem \ref{the_1},
Propositions \ref{pop}, \ref{ex_uniq_lin}, \ref{ex_uniq_adj} with the help
of { H\"{o}lder's } inequality imply that the stochastic processes $\mathbf{y,}$ $\mathbf{z}$
and $\mathbf{p,}$ $\mathbf{q,}$ $\pi ,$\ being the solutions for the systems
(\ref{NSy}), (\ref{linearized}) and (\ref{adjoint}),\ have the following
regularity 
\begin{eqnarray*}
\mathbf{y} &\in &L_{4}(\Omega ;C([0,T];L_{2}({\mathcal{O}})))\cap
L_{4}(\Omega \times (0,T);H^{1}({\mathcal{O}})), \\
\mathbf{z} &\in &L_{2}(\Omega ;C([0,T];L_{2}({\mathcal{O}})))\cap
L_{2}(\Omega \times (0,T);H^{1}({\mathcal{O}}))
\end{eqnarray*}%
and%
\begin{eqnarray*}
\mathbf{p} &\in &L_{2}(\Omega ;C([0,T];V)\cap L_{2}(0,T;H^{2}({\mathcal{O}}%
))),\qquad \mathbf{q}\in L_{2}(\Omega \times (0,T);V), \\
\pi &\in &L_{2}(0,T;H^{1}({\mathcal{O}})).
\end{eqnarray*}

Now, we demonstrate the main result of the article, establishing the
first-order necessary optimality condition for the problem $(\mathcal{P})$.

\begin{teo}
\label{main_1} Under the assumptions of Theorem \ref{main_existence}, let us
assume that $(\widehat{a},\widehat{b},\widehat{\mathbf{y}})$ is
 { an} optimal solution of problem $(\mathcal{P})$. Then $P$-a.e. in $\Omega $ there exists
a unique solution \ $(\widehat{\mathbf{p}},\widehat{\mathbf{q}},\widehat{\pi 
})$ of the adjoint system 
\begin{equation}
\left\{ 
\begin{array}{ll}
-d\widehat{\mathbf{p}}=\left[ \nu \Delta \widehat{\mathbf{p}}+2D(\widehat{%
\mathbf{p}})\widehat{\mathbf{y}}-\nabla \widehat{\pi }+\left( \widehat{%
\mathbf{y}}-\mathbf{y}_{d}\right) \right] \,dt & \, \\ 
&  \\ 
\,\quad \quad \quad +\nabla _{\mathbf{y}}\mathbf{G}(t,\widehat{\mathbf{y}}%
)^{T}\widehat{\mathbf{q}}\,dt-\widehat{\mathbf{q}}\,d{\mathcal{W}}_{t}%
\mathbf{,\qquad }\mathrm{div}\,\widehat{\mathbf{p}}=0 & \mbox{in}\ {\mathcal{%
O}}_{T},\vspace{2mm} \\ 
&  \\ 
\widehat{\mathbf{p}}\cdot \mathbf{n}=0,\qquad \left[ 2D(\widehat{\mathbf{p}})%
\mathbf{n}+(\alpha +{\frac{\widehat{a}}{\nu }})\widehat{\mathbf{p}}\right]
\cdot {\mathbf{\tau }}=0, & \mbox{on}\ \Gamma _{T},\vspace{2mm} \\ 
\widehat{\mathbf{p}}(T)=0, & \mbox{in}\ {\mathcal{O}},%
\end{array}%
\right.  \label{MR2}
\end{equation}%
verifying the optimality condition 
\begin{eqnarray}
\mathbb{E}\int_{\Gamma _{T}}\{(f &-&\widehat{a})[\widehat{\pi }-(\widehat{%
\mathbf{p}}\cdot \widehat{\mathbf{y}})-\left( 2\nu D(\widehat{\mathbf{p}})%
\mathbf{n}\right) \cdot \mathbf{n}]+\nu (g-\widehat{b})(\widehat{\mathbf{p}}%
\cdot \mathbf{\bm{\tau })}  \notag \\
&+&\lambda _{1}\widehat{a}\left( f-\widehat{a}\right) \,+\lambda _{2}%
\widehat{b}(g-\widehat{b})\,\}~d\mathbf{\gamma }dt\geqslant 0  \label{zvMR2}
\end{eqnarray}%
for all $(f,g)\in L_{4}(\Omega \times (0,T);\mathcal{H}_{p}(\Gamma ))$.
\end{teo}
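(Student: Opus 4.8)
The plan is to exploit the first-order optimality of $(\widehat{a},\widehat{b},\widehat{\mathbf{y}})$ in the convex admissible set $\mathcal{A}$, combined with the G\^{a}teaux differentiability of the control-to-state map (Proposition \ref{Gat}) and the duality relation (\ref{duality}). First I would construct the adjoint state. By the regularity recorded just before the statement (itself a consequence of Proposition \ref{pop} and the balance condition (\ref{CF})), the process $\widehat{\mathbf{y}}-\mathbf{y}_{d}$ has the integrability required by Proposition \ref{ex_uniq_adj}; applying that proposition with $\mathbf{U}=\widehat{\mathbf{y}}-\mathbf{y}_{d}$ furnishes the unique triple $(\widehat{\mathbf{p}},\widehat{\mathbf{q}},\widehat{\pi})$ solving (\ref{MR2}) with the regularity (\ref{up}). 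Next, given a competitor $(f,g)\in\mathcal{A}$, I set $a_{\varepsilon}=\widehat{a}+\varepsilon(f-\widehat{a})$, $b_{\varepsilon}=\widehat{b}+\varepsilon(g-\widehat{b})$, which lie in $\mathcal{A}$ for $\varepsilon\in[0,1]$ by convexity, so optimality yields $J(a_{\varepsilon},b_{\varepsilon},\mathbf{y}_{\varepsilon})\geqslant J(\widehat{a},\widehat{b},\widehat{\mathbf{y}})$. Proposition \ref{Gat} in the direction $(f-\widehat{a},g-\widehat{b})$ gives the expansion $\mathbf{y}_{\varepsilon}=\widehat{\mathbf{y}}+\varepsilon\mathbf{z}+\varepsilon\boldsymbol{\delta}_{\varepsilon}$ with $\mathbb{E}\int_{0}^{T}\|\boldsymbol{\delta}_{\varepsilon}\|_{V}^{2}\,ds\to0$, where $\mathbf{z}$ solves the linearized equation (\ref{linearized}) with boundary data $(f-\widehat{a},g-\widehat{b})$.

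Then I would differentiate the cost (\ref{cost}) along this family. Expanding $|\mathbf{y}_{\varepsilon}-\mathbf{y}_{d}|^{2}$, $|a_{\varepsilon}|^{2}$ and $|b_{\varepsilon}|^{2}$, dividing the increment by $\varepsilon$ and letting $\varepsilon\to0^{+}$, the quadratic remainders vanish: the volume remainder is $O(\varepsilon)\,\mathbb{E}\int_{\mathcal{O}_{T}}|\mathbf{z}+\boldsymbol{\delta}_{\varepsilon}|^{2}$, bounded thanks to $\mathbf{z}\in L_{2}(\Omega\times\mathcal{O}_{T})$ from (\ref{eqsa}), while the cross term $\mathbb{E}\int_{\mathcal{O}_{T}}(\widehat{\mathbf{y}}-\mathbf{y}_{d})\cdot\boldsymbol{\delta}_{\varepsilon}$ tends to zero by the Cauchy--Schwarz inequality and $\boldsymbol{\delta}_{\varepsilon}\to0$. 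This produces the variational inequality
\[ 0\leqslant \mathbb{E}\int_{0}^{T}(\widehat{\mathbf{y}}-\mathbf{y}_{d},\mathbf{z})\,dt+\mathbb{E}\int_{\Gamma_{T}}\left[\lambda_{1}\widehat{a}(f-\widehat{a})+\lambda_{2}\widehat{b}(g-\widehat{b})\right]\,d\mathbf{\gamma}\,dt. \]

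Finally I would eliminate the volume term via the duality relation (\ref{duality}), read with $\mathbf{U}=\widehat{\mathbf{y}}-\mathbf{y}_{d}$ and with the boundary data $(f-\widehat{a},g-\widehat{b})$ for $\mathbf{z}$. Taking expectations kills the stochastic integral, which is a genuine mean-zero martingale by the integrability of $\mathbf{z},\widehat{\mathbf{p}},\widehat{\mathbf{q}}$ stated before the theorem together with a Burkholder--Davis--Gundy bound, so that
\[ \mathbb{E}\int_{0}^{T}(\widehat{\mathbf{y}}-\mathbf{y}_{d},\mathbf{z})\,dt=\mathbb{E}\int_{\Gamma_{T}}\left\{\nu(g-\widehat{b})(\widehat{\mathbf{p}}\cdot\bm{\tau})+(f-\widehat{a})\left[\widehat{\pi}-(\widehat{\mathbf{p}}\cdot\widehat{\mathbf{y}})-2\nu(D(\widehat{\mathbf{p}})\mathbf{n})\cdot\mathbf{n}\right]\right\}\,d\mathbf{\gamma}\,dt. \]
Substituting this identity into the previous inequality yields exactly (\ref{zvMR2}) for every $(f,g)\in\mathcal{A}$, and, by the affine dependence of the boundary functional on the pair $(f,g)$, in the asserted form.

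The main obstacle will be the rigorous passage to the limit in the difference quotient of $J$: one must control the remainder $\boldsymbol{\delta}_{\varepsilon}$ uniformly in $\varepsilon$ and justify interchanging limit and expectation, which is precisely where the exponential integrability of Proposition \ref{pop} and the balance condition (\ref{CF}) are indispensable, since they guarantee simultaneously $\widehat{\mathbf{y}}\in L_{4}$ and the square-integrability of $\mathbf{z}$ (and of $\widehat{\mathbf{p}},\widehat{\mathbf{q}}$). A closely related technical point is confirming that the stochastic integral in (\ref{duality}) is a true martingale rather than merely a local one, so that its expectation indeed vanishes; this again hinges on the integrability secured by (\ref{CF}).
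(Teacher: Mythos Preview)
Your proposal is correct and follows essentially the same route as the paper: convex perturbation $(a_\varepsilon,b_\varepsilon)=(\widehat a,\widehat b)+\varepsilon\bigl((f,g)-(\widehat a,\widehat b)\bigr)$, the G\^ateaux expansion from Proposition~\ref{Gat}, differentiation of $J$ to obtain the variational inequality with $\mathbf{z}$, construction of the adjoint via Proposition~\ref{ex_uniq_adj} with $\mathbf{U}=\widehat{\mathbf{y}}-\mathbf{y}_d$, and substitution of the duality relation (\ref{duality}) after taking expectations. If anything you are more explicit than the paper about two technical points it leaves implicit: the justification that the stochastic integral in (\ref{duality}) has zero mean, and the control of the remainder $\boldsymbol{\delta}_\varepsilon$ in the passage to the limit.
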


\begin{proof}
Let $(\widehat{a},\widehat{b},\widehat{\mathbf{y}})$ be a solution of the
problem $(\mathcal{P}).$

According to Theorem \ref{the_1} and Proposition \ref{Gat}, for any $%
(a,b)\in L_{4}(\Omega \times (0,T);\mathcal{H}_{p}(\Gamma ))$ the
corresponding state system (\ref{NSy}) has a unique solution $\mathbf{y}$
and the control-to-state mapping 
\begin{equation*}
(a,b)\rightarrow \mathbf{y}
\end{equation*}%
is the G\^{a}teaux differentiable at $(\widehat{a},\widehat{b})$.

For $\varepsilon \in (0,1)$ and $(f,g)\in L_{4}(\Omega \times (0,T);\mathcal{%
H}_{p}(\Gamma ))$, let us set 
\begin{equation*}
a_{\varepsilon }=\widehat{a}+\varepsilon (f-\widehat{a}),\qquad
b_{\varepsilon }=\widehat{b}+\varepsilon (g-\widehat{b}).
\end{equation*}%
We consider the strong solution $\mathbf{y}_{\varepsilon }$ of (\ref{NSy})
with the boundary conditions $(a_{\varepsilon },b_{\varepsilon })$ (see
definition \ref{1def}). Since $(\widehat{a},\widehat{b},\widehat{\mathbf{y}}%
) $ is the optimal solution and $(a_{\varepsilon },b_{\varepsilon },\mathbf{y%
}_{\varepsilon })$ is admissible triple, using the convexity of the
functional $J$, we have 
\begin{equation*}
\lim_{\varepsilon \rightarrow 0^{+}}\frac{J(a_{\varepsilon },b_{\varepsilon
},\mathbf{y}_{\varepsilon })-J(\widehat{a},\widehat{b},\widehat{\mathbf{y}})%
}{\varepsilon }\geqslant 0,
\end{equation*}%
that, taking into account Proposition \ref{Gat}, gives the inequality 
\begin{equation}
\mathbb{E}\left[ \int_{{\mathcal{O}}_{T}}\widehat{\mathbf{z}}\cdot \left( 
\widehat{\mathbf{y}}-\mathbf{y}_{d}\right) \,d\mathbf{x}dt+\int_{\Gamma
_{T}}\left\{ \lambda _{1}\,\widehat{a}(f-\widehat{a})\,+\lambda _{2}\widehat{%
b}(g-\widehat{b})\right\} \,d\mathbf{\gamma }dt\right] \geqslant 0,
\label{zv}
\end{equation}%
where 
\begin{equation*}
\widehat{\mathbf{z}}=\lim_{\varepsilon \rightarrow 0^{+}}\frac{\widehat{%
\mathbf{y}}_{\varepsilon }-\widehat{\mathbf{y}}}{\varepsilon }
\end{equation*}%
is the unique strong solution of the system 
\begin{equation*}
\left\{ 
\begin{array}{l}
\begin{array}{l}
d\widehat{\mathbf{z}}=(\nu \Delta \widehat{\mathbf{z}}-\left( \widehat{%
\mathbf{z}}\mathbf{\cdot }\nabla \right) \widehat{\mathbf{y}}-\left( 
\widehat{\mathbf{y}}\mathbf{\cdot }\nabla \right) \widehat{\mathbf{z}}%
-\nabla \pi )\,dt+\nabla _{\mathbf{y}}\mathbf{G}(t,\widehat{\mathbf{y}})%
\widehat{\mathbf{z}}\,d{\mathcal{W}}_{t}\quad \text{in}\ {\mathcal{O}}_{T},%
\vspace{2mm} \\ 
\mathrm{div}\ \widehat{\mathbf{z}}=0,%
\end{array}
\\ 
\widehat{\mathbf{z}}\cdot \mathbf{n}=f-\widehat{a},\mathbf{\qquad }\left[ 2D(%
\widehat{\mathbf{z}})\,\mathbf{n}+\alpha \widehat{\mathbf{z}}\right] \cdot %
\bm{\tau }=g-\widehat{b}\mathbf{\qquad \qquad \qquad \qquad }\text{on}\
\Gamma _{T},\vspace{2mm} \\ 
\widehat{\mathbf{z}}(0)=0\mathbf{\qquad \qquad \qquad \qquad \qquad \qquad
\qquad \qquad \qquad \qquad }\text{in}\ {\mathcal{O}}%
\end{array}%
\right.
\end{equation*}%
in the sense of Definition \ref{def6.1}.

On the other hand, if we consider the system (\ref{adjoint}) with 
\begin{equation*}
\mathbf{U}=\widehat{\mathbf{y}}-\mathbf{y}_{d},\quad a=\widehat{a}\quad 
\text{and }\quad \mathbf{y}=\widehat{\mathbf{y}},
\end{equation*}
then Proposition \ref{ex_uniq_adj} implies that there exists the adjoint
state pair $(\widehat{\mathbf{p}},\widehat{\pi }),$ which verifies the
equation (\ref{MR2}), $P$-a.e. in $\Omega $. Since the duality property (\ref%
{duality}) is valid for 
\begin{equation*}
\mathbf{z}=\widehat{\mathbf{z}},\quad \mathbf{U}=\widehat{\mathbf{y}}-%
\mathbf{y}_{d}\quad \text{and }\quad (\mathbf{p},\pi )=(\widehat{\mathbf{p}},%
\widehat{\pi }),
\end{equation*}%
\ we deduce the equality 
\begin{align*}
\int_{{\mathcal{O}}_{T}}& \widehat{\mathbf{z}}\cdot \left( \widehat{\mathbf{y%
}}-\mathbf{y}_{d}\right) \,d\mathbf{x}dt \\
& =\int_{\Gamma _{T}}\left\{ \nu (g-\widehat{b})\left( \widehat{\mathbf{p}}%
\cdot \mathbf{\bm{\tau }}\right) +(f-\widehat{a})\left[ \pi -(\widehat{%
\mathbf{p}}\cdot \widehat{\mathbf{y}})-2\nu \left( \mathbf{n}\cdot D(%
\widehat{\mathbf{p}})\right) \cdot \mathbf{n}\right] \right\} \,d\mathbf{%
\gamma }dt \\
& \quad +\int_{0}^{T}\left[ \left( \widehat{\mathbf{q}}\mathbf{,\widehat{%
\mathbf{z}}}\right) \,+\left( \nabla _{\mathbf{y}}\mathbf{G}(t,\widehat{%
\mathbf{y}})\widehat{\mathbf{z}},\widehat{\mathbf{p}}\right) \right] \,d{%
\mathcal{W}}_{t}.
\end{align*}%
Substituting this equality into (\ref{zv}) and taking the expectation, we
obtain the optimality condition (\ref{zvMR2}).
\end{proof}

\begin{remark}
The condition (\ref{CF}) in Theorem \ref{main_1} can be interpreted as
follows:\ 

a) Under the presence of strong noise (corresponding to high values of $L$),
the fluid behaviour can be optimally controlled through the injection/suction
device modelled by the Navier-slip boundary conditions if the fluid is very
viscous.

b) Under very small random disturbances (corresponding to small enough
values of $L$), the condition (\ref{CF}) holds for high viscosity values $%
\nu $, then the fluid behavior can be optimally controlled.
\end{remark}

\textbf{Acknowledgement} {\footnotesize {\ The work of N.V. Chemetov was
supported by FAPESP (Funda\c{c}\~{a}o de Amparo \`{a} Pesquisa do Estado de S%
\~{a}o Paulo), project 2021/03758-8, "Mathematical problems in fluid
dynamics". }}

{\footnotesize Also the collaboration scientific work of F. Cipriano and
N.V. Chemetov was supported by FAPESP (Funda\c{c}\~{a}o de Amparo \`{a}
Pesquisa do Estado de S\~{a}o Paulo), through the Visiting Professor Project
2023/05271-4, "Problema de controlo otimo atrav\'{e}s da fronteira para equa%
\c{c}\~{o}es de Navier-Stokes estoc\'{a}sticas". }

{\footnotesize The work of F. Cipriano is funded by national funds through
the FCT - Funda\c{c}\~{a}o para a Ci\^{e}ncia e a Tecnologia, I.P., under
the scope of the projects UIDB/00297/2020 and UIDP/00297/2020 (Center for
Mathematics and Applications).}


\begin{thebibliography}{99}
\bibitem{AH} \textsc{B. Andrews, C. Hopper,} \emph{The Ricci Flow in
Riemannian Geometry,} \textbf{271,} Lecture Notes in Mathematics,
Springer-Verlag Berlin Heidelberg, 2011.

\bibitem{amr2} \textsc{C. Amrouche, A. Rejaiba}, \emph{Stationary Stokes
equations with friction slip boundary conditions}, 
Monograf\'{\i}as Matem\'{a}ticas, Garc\'{\i}a de Galdeano 39 (2014), pp. 23-32.

\bibitem{BT19} \textsc{P. Benner and C. Trautwein,} \emph{Optimal control
problems constrained by the stochastic { Navier-Stokes } equations with
multiplicative { L\'{e}vy } noise,} Mathematische Nachrichten 292(7)
(2019),  pp. 1444-1461.

\bibitem{BT21} \textsc{P. Benner and C. Trautwein,} \emph{A Stochastic
Maximum Principle for Control Problems Constrained by the Stochastic
{ Navier-Stokes } Equations,} Applied Mathematics \& Optimization 84 (2021),  pp.
1001-1054.

\bibitem{bla} \textsc{T. L. Black, A. J. Sarnecki}, \emph{The Turbulent
Boundary Layer with Suction or Injection}, Aeronautical Research Council
Reports and Memoranda 3387 (October, 1958), London, 1965.

\bibitem{bra} \textsc{A. L. Braslow}, \emph{A History of Suction-Type
Laminar-Flow Control with Emphasis on Flight Research}, NASA History
Division, 1999.

\bibitem{B99} \textsc{H. I. Breckner}, \emph{Approximation and optimal
control of the stochastic Navier-Stokes equations. } Ph.D. Thesis, Halle
(Saale), 1999.

\bibitem{C} \textsc{N. V. Chemetov, S. N. Antontsev}, \emph{Euler equations
with non-homogeneous Navier-slip boundary condition}, Physica D: Nonlinear
Phenomena 237 (2008), pp. 92-105.

\bibitem{CC1} \textsc{N. V. Chemetov, F. Cipriano}, \emph{Well-posedness of
stochastic second grade fluids}, J. Math. Anal. Appl. 454 (2017), pp.
585-616.

\bibitem{CC2} \textsc{N. V. Chemetov, F. Cipriano,} \emph{Optimal control
for two-dimensional stochastic second grade fluids}, Stochastic Processes
Appl. 128 (2018),  pp. 2710-2749.

\bibitem{CC3} \textsc{N. V. Chemetov, F. Cipriano}, \emph{Boundary layer
problem: Navier-Stokes equations and Euler equations}, Nonlinear Analysis:
Real World Applications 14 (6) (2013), pp. 2091-2104.

\bibitem{CC4} \textsc{N. V. Chemetov, F. Cipriano}, \emph{The Inviscid Limit
for the Navier-Stokes Equations with Slip Condition on Permeable Walls},
Journal of Nonlinear Science 23 (5) (2013), pp. 731-750.

\bibitem{CC5} \textsc{N. V. Chemetov, F. Cipriano}, \emph{Inviscid limit for
Navier--Stokes equations in domains with permeable boundaries}, Applied
Math. Letters 33 (2014), pp. 6-11.

\bibitem{CC18} \textsc{N. V. Chemetov, F. Cipriano}, \emph{Injection-Suction
Control for Two-Dimensional Navier-Stokes Equations with Slippage,} SIAM J.
Control Optim. 56(2) (2018),  pp. 1253-1281.


\bibitem{CC6} \textsc{N. V. Chemetov, F. Cipriano, S. Gavrilyuk}, \emph{%
Shallow water model for the lake with friction and penetration},
Mathematical Methods in the Applied Sciences 33 (6) (2010), pp. 687-703.

{
\bibitem{CC16} \textsc{N. V. Chemetov, F. Cipriano}, \emph{A boundary
control problem for stochastic 2D-Navier-Stokes equations}, J. Optimization Theory and Applications, (2024), pp. 1-33. https://doi.org/10.1007/s10957-024-02416-3
}

\bibitem{clop} \textsc{T. Clopeau, A. Mikelic, R. Robert}, \emph{On the
vanishing viscosity limit for the 2D incompressible Navier-Stokes equations
with the friction type boundary conditions}, Nonlinearity 11 (1998), pp.
1625-1636.

\bibitem{C96} \textsc{J. M. Coron}, \emph{On the controllability of the $2D$
incompressible Navier-Stokes equations with the Navier-slip boundary
conditions}, ESAIM Control Optim. Calc. Var. 1 (1996), pp. 35-75.

{	
\bibitem{Daprato} \textsc{G. Da Prato,  J. Zabczyk}, 	\emph{Stochastic Equations in Infinite Dimensions,}  Cambridge University Press, 2014.
}

\bibitem{deRK05} \textsc{J. C. de los Reyes, K. Kunisch}, \emph{A
semi-smooth Newton method for control constrained boundary optimal control
of the Navier-Stokes equations}, Nonlinear Analysis 62 (2005), pp. 1289-1316.

\bibitem{deRY09} \textsc{J. C. de los Reyes, I. Yousept}, \emph{Regularized
state-constrained boundary optimal control of the Navier-Stokes equations},
J. Math. Anal. Appl. 356 (2009), pp. 257-279.

\bibitem{evans} \textsc{L. C. Evans}, \emph{Partial Differential Equations,}
AMS, Graduate Studies in Mathematics, Vol. 19, 1998.

\bibitem{backward-SPDE} \textsc{G. Fabbri, F. Gozzi, A. Swiech,} \emph{%
Stochastic optimal control in infinite dimension,} Probability and
Stochastic Modelling, Springer, 2017.

\bibitem{FGHM00} \textsc{A. Fursikov, M. Gunzburger, L. S. Hou, S.
Manservisi,} \emph{Optimal control problems for the Navier-Stokes equations.%
} Lectures on Applied Mathematics,  Springer, Berlin (2000), pp. { 143-155. }

\bibitem{gir} \textsc{V. Girault, P.-A. Raviart}, \emph{Finite Element
Methods for Navier-Stokes equations}, Theory and Algorithms,
Springer-Verlag, Berlin Heidelberg New-York, 1986.

\bibitem{ghs} \textsc{M. Gunzburger, L. Hou, T. Svobodny}, \emph{Analysis
and finite element approximation of optimal control problems for the
stationary Navier-Stokes equations with Dirichlet controls},  Mod\'el. Math.
Anal. Num. 25 (1991), pp. 711-748.

\bibitem{gm} \textsc{M. Gunzburger, S. Manservisi}, \emph{The velocity
tracking problem for Navier-Stokes flows with boundary control}, SIAM J.
Control. Optim. 30 (2000), pp. 594-634.

\bibitem{lad} \textsc{O. A. Ladyzhenskaya, V. A. Solonnikov, N. N. Uraltseva},
 \emph{Linear and quasilinear equations of parabolic type}, Translations of
mathematical monographs, Vol. 23, Providence, R.I.: American Mathematical
Society, 1968.

\bibitem{L00} \textsc{H. Lisei}, \emph{A minimum principle for the
stochastic Navier-Stokes equation}, Stud. Univ. Babe\c{s}-Bolyai Math. 45(2)
(2000), pp. 37-65.

\bibitem{L02} \textsc{H. Lisei}, \emph{Existence of optimal and
Epsilon-optimal controls for the stochastic Navier-Stokes equation},
Nonlinear Anal. 51 (2002), pp. 95-118.

\bibitem{nir} \textsc{L. Nirenberg}, \emph{On elliptic partial differential
equations}, Ann. Scuola Norm. Sup. Pisa 13 (3) (1959), pp. 115-162.

\bibitem{S98} \textsc{S. S. Sritharan,} \emph{An introduction to
deterministic and stochastic control of viscous flow.} Optimal Control of
Viscous Flow, SIAM, Philadelphia (1998), pp. { 1-42.}

\bibitem{TC23S} \textsc{Y. Tahraoui, F. Cipriano}, \emph{Optimal control of
third grade fluids with multiplicative noise}, Submitted.
10.48550/arXiv.2306.13231

\bibitem{TC23} \textsc{Y. Tahraoui, F. Cipriano,} \emph{Optimal control of
two dimensional third grade fluids.} Journal of Mathematical Analysis and
Applications, 523 (2) (2023). https://doi.org/10.1016/j.jmaa.2023.127032

\bibitem{tem} \textsc{R. Temam}, \emph{Navier-Stokes equations, Theory and
Numerical analysis}, AMS\ Chelsea Publishing, Providence, Rhode Island, 2001.

\bibitem{S73} \textsc{V. E. \v{S}\v{c}adilov, V. A. Solonikov}, \emph{On a
boundary value problem for a stationary system of Navier-Stokes equations},
Proc. Steklov Inst. Math. 125 (1973), pp. 186-199.

\bibitem{TW06} \textsc{F.Tr\"oltzsch, D. Wachsmuth}, \emph{Second-order
sufficient optimality conditions for the optimal control of Navier-Stokes
equations,} ESAIM: COCV 12 (2006), pp. 93-119.

\bibitem{WR10} \textsc{D. Wachsmuth, T. Roub\'{\i}\v{c}ek}, \emph{Optimal
control of incompressible non-Newtonian fluids}, Z. Anal. Anwend 29 (2010),
pp. 351-376.

\bibitem{ZG23} \textsc{W. Zhao, M. Gunzburger,} \emph{Stochastic Collocation
Method for Stochastic Optimal Boundary Control of the { Navier-Stokes } Equations%
} Applied Mathematics \& Optimization, 87 (6) (2023). https://doi.org/10.1007/s00245-022-09910-y
\end{thebibliography}
\end{document}